\documentclass[envcountsame,envcountsect]{svmult}
\usepackage{amssymb,amsmath,amsrefs,newlfont,enumerate,array,graphicx,tikz,tkz-tab,tikz-cd,xcolor,float,subcaption,graphicx}
\usepackage{hyperref, url, cite, microtype}

\allowdisplaybreaks
\usepackage{mathtools}
\mathtoolsset{showonlyrefs}

\makeatletter
\let\c@equation\c@theorem
\makeatother

\makeatletter

\spnewtheorem{example}[theorem]{Example}{\bfseries}{\rmfamily}
\makeatother

\usepackage{palatino}
\usepackage[font=small,labelfont=bf]{caption}
\newcommand{\C}{\mathbb{C}}
\newcommand{\conv}{\operatorname{conv}}
\newcommand{\DS}{\mathsf{DS}}
\newcommand{\DSS}{\mathsf{DSS}}
\newcommand{\e}{\mathrm{e}}
\newcommand{\ex}{\operatorname{ex}}
\newcommand{\F}{\mathrm{F}}
\newcommand{\h}{\mathsf{H}}
\newcommand{\M}{\mathsf{M}}
\newcommand{\Perm}{\mathsf{P}}
\newcommand{\PermSub}{\mathsf{PS}}
\newcommand{\R}{\mathbb{R}}
\newcommand{\T}{\mathsf{T}}
\newcommand{\U}{\mathcal{U}}
\newcommand{\V}{\mathcal{V}}
\newcommand{\W}{\mathcal{W}}

\newcommand{\rank}{\operatorname{rank}}
\newcommand{\tr}{\operatorname{tr}}
\newcommand{\Span}{\operatorname{span}}
\newcommand{\w}{\mathrm{w}}
\newcommand{\0}{{\color{lightgray}0}}

\renewcommand{\leq}{\leqslant}
\renewcommand{\geq}{\geqslant}
\renewcommand{\subset}{\subseteq}
\renewcommand{\phi}{\varphi}
\renewcommand{\vec}[1]{\mathbf{#1}}

\newcommand{\diag}{\operatorname{diag}}

\begin{document}

\title*{Unitarily invariant norms}
\titlerunning{Unitarily invariant norms}

\author{Stephan Ramon Garcia\, \orcidID{0000-0001-8971-5448}\\ Javad Mashreghi\, \orcidID{0000-0002-7969-9576}\\ Marek Ptak\,\orcidID{0000-0002-3843-7932}\\ William T. Ross\,\orcidID{0000-0002-3357-3767}}

\institute{Stephan Ramon Garcia \at Department of Mathematics and Statistics, Pomona College, Claremont, California, 91711, USA, \email{stephan.garcia@pomona.edu}, \url{https://stephangarcia.sites.pomona.edu/} \and Javad Mashreghi \at D\'epartement de math\'ematiques et de statistique, Universit\'e Laval, Qu\'ebec, QC, Canada, G1K 0A6, \email{javad.mashreghi@mat.ulaval.ca} \and Marek Ptak \at Department of Applied Mathematics, University of Agriculture, ul. Balicka 253c  30-198 Krak\'ow, Poland, \email{rmptak@cyf-kr.edu.pl} \and William T. Ross \at Department of Mathematics and Computer Science, University of Richmond, Richmond, VA 23173, USA, \email{wross@richmond.edu}}




\maketitle
\renewcommand{\theequation}{\thetheorem}    

\vspace{-1in}
\keywords{Majorization, matrix norm, convexity, symmetric gauge function, unitary matrices}
\medskip

\abstract{
This survey paper provides a comprehensive study of unitarily invariant norms on the algebra of $n \times n$ matrices. This investigation leads naturally to the theory of symmetric gauge functions, a class of norms on $\R^n$ characterized by invariance and monotonicity properties. We develop the necessary framework by examining absolute and monotone norms and establishing their equivalence, thereby offering additional insight into the classical Hardy--Littlewood--P\'{o}lya theorem on majorization. The theory of majorization is further explored through its connections with doubly stochastic matrices, convexity, and fundamental results such as the Birkhoff and Rad\'o theorems, as well as K\"{o}nig’s theorem on term rank and line rank. We also study weak majorization and derive a characterization that plays a crucial role in proving the monotonicity of symmetric gauge functions. On the spectral side, we review key results in matrix analysis, including the Courant--Fischer min–max theorem, the Cauchy interlacing theorem, and Ky Fan’s majorization theorem, along with a weak subadditivity result for singular values of arbitrary matrices. These ingredients culminate in a detailed proof of von Neumann’s characterization of unitarily invariant norms, which provides a complete and elegant description of this class of norms. Some illustrative examples, as well as the Ky Fan domination principle as an application, are also presented.}


\section{Introduction}
Let $\M_{m \times n}$ denote the set of all $m \times n$ complex matrices. For convenience, we let $\M_n = \M_{n \times n}$. Since $\M_n$ is a finite-dimensional vector space, all norms on it are equivalent: if $\|\cdot\|_{\mathrm{a}}$ and $\|\cdot\|_{\mathrm{b}}$ are norms on $\M_n$, then there is a $c > 0$ such that 
$c^{-1} \|A\|_{\mathrm{a}} \leq \|A\|_{\mathrm{b}} \leq c \|A\|_{\mathrm{a}}$ for all $A \in \M_{n}$.  Consequently,
every norm on $\M_n$ gives rise to the same open sets, closed sets, and convergent sequences.
Despite this, some norms possess additional features that make them more interesting. In this survey, we study the \emph{unitarily invariant norms} on $\M_n$. These are the norms  $\|\cdot\|$ on $\M_n$ satisfying
\begin{equation}\label{E:UIN-def-norm}
\|UAV\| = \|A\|
\end{equation}
for all $A \in \M_n$ and all unitary $U, V \in \M_n$. Recall that $U \in \M_n$ is \emph{unitary} if $U^{*} U = I$;
that is, if its columns are orthonormal in $\C^n$.

For each $A \in \M_n$, the matrix $A^{*} A$ is positive semidefinite and hence has nonnegative eigenvalues. The nonnegative square roots of these eigenvalues are the \emph{singular values} of $A$. It is traditional to denote them as
\begin{equation}\label{eq:SingularDecrease}
s_1(A) \geq s_2(A) \geq \cdots \geq s_n(A) \geq 0.
\end{equation}
Write
\begin{equation*}
\vec{s}(A) :=
\begin{bmatrix}
s_1(A) \\
s_2(A) \\
\vdots\\
s_n(A)
\end{bmatrix} \in \R^n_{+}\!\!\downarrow,
\end{equation*}
in which $\R^n_{+}\!\!\downarrow$ is the subset of $\R^n$ consisting of all 
$\vec{x} = [x_i] \in \R^n$ such that $x_1 \geq x_2 \geq \cdots \geq x_n \geq 0$; see Figure \ref{fig:region-R2d}.  Here $\R_+ = [0,\infty)$.

\begin{figure}
\centering
\begin{tikzpicture}[scale=1.0]
%
\draw[] (-0.5,0) -- (4,0) node[right] {$x_1$};
\draw[] (0,-0.5) -- (0,4) node[above] {$x_2$};
%
\fill[gray!25] (0,0) -- (4,0) -- (4,4) -- cycle;
 Line x1 = x2
\draw[thick,->] (0,0) -- (4,4) node[above right] {$x_1=x_2$};
\draw[gray] (-0.5,-0.5) -- (0,0);
\draw[thick,->] (0,0) -- (4,0);
\end{tikzpicture}
\caption{The region $\R^2_{+}\!\!\downarrow$.}
\label{fig:region-R2d}
\end{figure}

The singular value decomposition provides unitaries $U,V\in \M_n$ such that
\begin{equation}\label{E:UIN-A=udiagV-0}
A = U (\diag \vec{s}(A))  V,  
\end{equation}
in which $\diag \vec{x}$ denotes the $n \times n$ diagonal matrix whose diagonal entries are the entries of $\vec{x} \in \R^n$, in the same order  \cite[Theorem 1.12]{MR3076701}, \cite[Theorem 16.1.5]{SCLA2}. Consequently, if $\|\cdot\|$ is a unitarily invariant norm on $\M_n$, then
\begin{equation}\label{E:UIN-A=udiagV}
\|A\| = \|\diag (s(A))\|.
\end{equation}
Thus, $\|A\|$ is determined by its vector of singular values $\vec{s}(A) \in \R^n_{+}\!\!\downarrow$. 

Given $\phi: \R^n_{+}\!\!\downarrow  \to \R_{+}$, define
\begin{equation}\label{eq:psA}
\|A\| := \phi( \vec{s}(A) )
\end{equation}
for $A \in \M_n$.  In this survey, we provide necessary and sufficient conditions on $\phi$ such that \eqref{eq:psA} is a unitarily invariant norm on $\M_n$. This inquiry leads to symmetric gauge functions, a class of norms on $\R^n$ endowed with certain structural properties. In the following sections, we develop tools to study these objects and we then use them in \S \ref{S:UIN-on-neumann} to establish von Neumann’s characterization of unitarily invariant norms. 

Although von Neumann’s proof is both elegant and relatively short, a full understanding of its components requires familiarity with several important topics, including the min–max theorem for Hermitian matrices, majorization, convexity, and symmetric gauge functions. Figure~\ref{fig:theorems-needed} summarizes the ten principal results involved. Broadly speaking, the left column pertains to the spectral theory of Hermitian matrices, while the other two columns focus on majorization and its connections with doubly stochastic matrices and convexity. These two strands ultimately come together to yield von Neumann’s theorem.

The structure of the paper is as follows. In \S \ref{S:UIN-examples}, we begin with two fundamental examples of unitarily invariant norms: the spectral norm and the Frobenius norm. In \S \ref{S:UIN-absolute-norms}, we study absolute norms and monotone norms, and establish their equivalence via an induction argument which also helps clarify the proof of the Hardy--Littlewood--P\'{o}lya theorem presented in \S \ref{S:UIN-majoraization}. The notions of term rank and line rank are introduced in \S \ref{S:UIN-konig}, in which K\"{o}nig’s theorem on their equivalence is proved. In \S \ref{S:UIN-birkhoff}, we investigate convex bodies and prove the Birkhoff and Rad\'o theorems. \S \ref{S:UIN-weak-major} is devoted to weak majorization, for which we provide a characterization based on majorization results developed in \S\S \ref{S:UIN-majoraization} and \ref{S:UIN-birkhoff}. We use these results, together with Rad\'o’s theorem from \S \ref{S:UIN-birkhoff}, to establish the monotonicity of symmetric gauge functions in \S \ref{S:UIN-monotonicity}. This latter result is one of the key ingredients in the proof of von Neumann’s theorem.

To develop the material on the left side of Figure \ref{fig:theorems-needed}, we begin with deeper properties of Hermitian matrices, such as the Courant--Fischer theorem in \S \ref{S:UIN-the-min-max-thm}. In \S \ref{UIN-Cauchy}, we study Cauchy's interlacing theorem (Theorem \ref{T:UIN-cauchy}). Section  \ref{S:UIN-fan} presents Ky Fan’s majorization theorem (Theorem \ref{T:UIN-fan}). In \S \ref{S:UIN-weak-aubadd}, we establish an analogous result for singular values of arbitrary matrices in $\M_n$.  While Ky Fan’s theorem shows that the eigenvalues of Hermitian matrices are weakly subadditive, Theorem \ref{T:UIN-weak-aubadd} demonstrates that the singular values of general matrices are weakly subadditive. Finally, in \S \ref{S:UIN-on-neumann}, we present von Neumann’s characterization of unitarily invariant norms. In the final two sections, \S\S\ref{S:UIN-further-examples}--\ref{S:UIN-further-applications}, we provide additional examples of unitarily invariant norms along with the Ky Fan domination principle as an application of von Neumann's characterization.

\bigskip\noindent\textbf{Notation.}
Our inner products are linear in the first slot and our vectors are column vectors, so $\vec{x} = [x_i] \in \C^n$ denotes the $n \times 1$ matrix with entries $x_1,x_2,\ldots,x_n$.  On rare occasions, when typographical necessity demands it, we may write a vector inline as $(x_1,x_2,\ldots,x_n)$, or even $[x_1~x_2~\ldots~x_n]^{\T}$, in which the superscript ${}^{\T}$ denotes the transpose.  The all-ones vector is denoted by $\vec{1}$, in which the length can be inferred by context. We let $\M_{m \times n}$ denote the set of $m \times n$ complex matrices; $\M_n = \M_{n \times n}$ for short. The set of $n \times n$ Hermitian matrices is $\h_n$.  If $A,B\in \h_n$, then $A \geq B$ means that $A - B$ is positive semidefinite.  Below $\|\cdot\|$ denotes the Euclidean norm on $\C^n$.  We adorn other norms with appropriate subscripts. We use $:=$ when defining something globally; that is, when defining something used throughout the remainder of the paper.

\begin{figure}
\newcommand{\thmnode}[2]{%
  \shortstack[c]{#1\\ {\scriptsize (Theorem \ref{#2})}}%
}
\centering
\begin{tikzcd}[row sep=16pt, column sep=8pt]
\thmnode{Courant--Fischer}{T:max-min}  \arrow[d] &  &  \thmnode{Hardy--Littlewood--P\'{o}lya}{T:UIN-Hardy-Littlewood-Polya} \arrow[dd]\arrow[rrdd]   &  & \thmnode{K\"{o}nig}{T:UIN-konig} \arrow[d]                \\
\thmnode{Cauchy}{T:UIN-cauchy} \arrow[d]                                        &  &   &  & \thmnode{Birkhoff}{T:UIN-birkhoff} \arrow[d] \arrow[lld] \\
\thmnode{Ky Fan}{T:UIN-fan} \arrow[d]                                           &  & \thmnode{Weak Majorization}{T:UIN-weak-major-char-2} \arrow[rd] &  & \thmnode{Rad\'o}{T:UIN-Rado} \arrow[ld]                 \\
\thmnode{Weak Subadditivity}{T:UIN-weak-aubadd} \arrow[rrd]                                      &  &                  & \thmnode{Monotonicity}{T:UIN-monotonicity} \arrow[ld] &            \\                                                   &  & \thmnode{von Neumann}{T:UIN-von-Neumann}             &  &
\end{tikzcd}
\caption{Ingredients for von Neumann's theorem.}
\label{fig:theorems-needed}
\end{figure}

\section{Two fundamental examples} \label{S:UIN-examples}
We start with two well-known norms on $\M_n$ that satisfy \eqref{E:UIN-def-norm}: the spectral norm and the Frobenius norm. These important examples help illustrate a number of standard techniques that are useful in what follows.

\subsection*{The spectral norm} 
The \emph{spectral norm} of $A \in \M_n$ is defined by
\begin{equation*}
\|A\|_{\mathrm{op}} := \sup_{\|\vec{x}\| = 1} \|A\vec{x}\|,
\end{equation*}
in which $\vec{x}$ runs over the unit sphere of $\C^n$.  
The notation stems from the fact that the spectral norm is the operator norm when we regard $A$ as a bounded linear operator on $\C^n$, equipped with the Euclidean norm. 
The definition and a scaling argument ensure that
\begin{equation}\label{eq:SpectralAx}
    \| A \vec{x} \| \leq \|A \|_{\mathrm{op}}\  \| \vec{x} \|
\end{equation}
for all $\vec{x} \in \C^n$; the inequality being automatic for $\vec{x} = \vec{0}$.

Let $U,V \in \M_n$ be unitary and observe that
\begin{equation*}
\|(UAV)\vec{x}\| = \|U(AV\vec{x})\| = \|AV\vec{x}\|
\end{equation*}
since $U$ is isometric.  If $\vec{y} = V\vec{x}$, then $\|\vec{y}\|=\|\vec{x}\|$ since $V$ is isometric.
Moreover, $\vec{y}$ runs over the unit sphere of $\C^n$ as $\vec{x}$ does. Therefore, 
\begin{align*}
\|UAV\|_{\mathrm{op}} 
&= \sup_{\|\vec{x}\| = 1}  \|UAV\vec{x}\| \\
&= \sup_{\|\vec{x}\| = 1}  \|AV\vec{x}\| \\
&= \sup_{\|\vec{y}\| = 1}  \|A\vec{y}\| \\
& = \|A\|_{\mathrm{op}}
\end{align*}
for all $A \in \M_n$ and all unitary matrices $U,V \in \M_n$. That is, the spectral norm is unitarily invariant. 

According to \eqref{E:UIN-A=udiagV}, we also see that
\begin{align}
\|A\|_{\mathrm{op}} &= \| \diag s(A)\|_{\mathrm{op}} \notag\\
&= \sup_{\|\vec{x}\|=1} \|\diag (s(A))\vec{x}\|  \notag\\
&= \sup_{|x_1|^2+\cdots+|x_n|^2=1}\left( s_1(A)|x_1|^2+s_2(A)|x_2|^2+\cdots+s_n(A)|x_n|^2 \right)^{\frac{1}{2}} \notag\\
&= s_1(A) \label{E:UIN-spectral-norm-s1}
\end{align}
by \eqref{eq:SingularDecrease}.
Note that \eqref{eq:SpectralAx} ensures that
\begin{equation*}
\|AB\vec{x}\| \leq \|A\|_{\mathrm{op}} \  \|B\vec{x}\| \leq \|A\|_{\mathrm{op}} \  \|B\|_{\mathrm{op}}\  \|\vec{x}\|
\end{equation*}
for all $\vec{x} \in \C^n$.  Consequently, the spectral norm is \emph{submultiplicative}:
\begin{equation}\label{E:UIN-spectral-norm-s-mul}
\|AB\|_{\mathrm{op}} \leq \|A\|_{\mathrm{op}}\   \|B\|_{\mathrm{op}}.
\end{equation}
In the matrix-theory literature, a norm on $\M_n$ that is submultiplicative is called a \emph{matrix norm}.  Beware: although ``matrix norm'' sounds like it is simply a norm on a space of matrices, it entails this additional property.

\subsection*{The Frobenius norm} 
The \emph{Frobenius norm} of $A = [a_{ij}]\in \M_n$ is
\begin{equation}\label{E:UIN-def-norm-F}
\|A\|_{\F} := \bigg(\sum_{i,j=1}^{n} |a_{ij}|^2\bigg)^{\frac{1}{2}}.
\end{equation}
There are at least two ways to verify that $\|\cdot\|_{\F}$ is unitarily invariant. The first is based on the formula
\begin{equation}\label{E:UIN-def-norm-F2}
\|A\|_{\F}^2 = \tr (A^*A)
\end{equation}
and the cyclic invariance of the trace: $\tr (XY) =\tr (YX)$. For all $A \in \M_n$ and all unitary matrices $U,V \in \M_n$,
\begin{align*}
\|UAV\|_{\F}^2 
&= \tr ((UAV)^*(UAV)) \\
&= \tr ((V^*A^*U^*)(UAV)) \\
&= \tr (V^*A^*AV) \\
&= \tr (A^*AVV^*) \\
&= \tr (A^*A)\\
&= \|A\|_{\F}^2.
\end{align*}

For the second approach, observe that $\|X^{\T} \|_{\F} = \| X\|_{\F}$ for all $X \in \M_n$.
If $A = [ \vec{a}_1~\vec{a}_2~\ldots~\vec{a}_n] \in \M_n$, in which each $\vec{a}_i \in \C^n$, and $U \in \M_n$ is unitary, then $UA = [ U\vec{a}_1~U\vec{a}_2~\ldots~U\vec{a}_n]$.  Thus, \eqref{E:UIN-def-norm-F} ensures that
\begin{equation*}
\|UA\|_{\F}^2 
= \sum_{j=1}^{n} \|U \vec{a}_j\|^2 
= \sum_{j=1}^{n} \|\vec{a}_j\|^2 
= \|A\|_{\F}^2.
\end{equation*}
Since $V$ is unitary, the previous equation yields
\begin{equation*}
\| UAV \|_{\F}^2 
= \| (UAV)^{\T} \|_{\F}^2
= \| V^{\T} A^{\T} U^{\T} \|_{\F}^2
= \| A^{\T} U^{\T} \|_{\F}^2
= \| UA \|_{\F}^2
= \| A \|_{\F}^2.
\end{equation*}

According to \eqref{E:UIN-A=udiagV} and \eqref{E:UIN-def-norm-F}, we also see that
\begin{align}
\|A\|_{\F} 
&= \| \diag \big(s(A)\big)\| \notag\\
&= \bigg( \sum_{k=1}^{n} s_k^2(A) \bigg)^{\frac{1}{2}}. \label{E:UIN-spectral-norm-s2}
\end{align}

\section{Absolute and monotone norms}\label{S:UIN-absolute-norms}
For each $\vec{x} = [x_i] \in \C^n$, define
\begin{equation*}
|\vec{x}| :=
\begin{bmatrix}
|x_1| \\
|x_2| \\
\vdots\\
|x_n|
\end{bmatrix}
\qquad\text{and}\qquad
\vec{x}_{\sigma} :=
\begin{bmatrix}
x_{\sigma(1)} \\
x_{\sigma(2)} \\
\vdots\\
x_{\sigma(n)}
\end{bmatrix},
\end{equation*}
in which $\sigma$ is a permutation of $\{1,2,\ldots,n\}$. The set of all such permutations forms a group, the \emph{symmetric group} $\mathfrak{S}_n$, which has order $n!$ \cite{MR1721031, MR3496353, MR2963408}.

Given $\vec{x} = [x_i]$ and $\vec{y} = [y_i]$ in $\R^n$,
we write $\vec{x} \leq_{\e} \vec{y}$ if $x_j \leq y_j$ for all $1 \leq j \leq n$. We use a similar notation for matrices: given real matrices $A=[a_{ij}]$ and $B=[b_{ij}] \in \M_{m \times n}$, we write $A \leq_{\e} B$ if $a_{ij} \leq b_{ij}$ for all $1 \leq i \leq m$ and $1 \leq j \leq n$. The subscript ${}_\e$ indicates an \emph{entrywise} comparison to distinguish it from other orderings that might exist between matrices (for example, the L\"owner ordering in Hermitian matrices).

A norm $\|\cdot\|$ on $\C^n$ is \emph{monotone} if
\begin{equation*}
|\vec{x}| \leq_{\e} |\vec{y}| \implies
\|\vec{x}\| \leq \|\vec{y}\|,
\end{equation*}
\emph{absolute} if
\begin{equation*}
\big\|\, |\vec{x}|\,  \big\| = \|\vec{x}\|,
\end{equation*}
and \emph{symmetric} if
\begin{equation*}
\|\vec{x}_{\sigma}\| = \|\vec{x}\|
\end{equation*}
for all $\vec{x}, \vec{y} \in \C^n$ 
and $\sigma \in \mathfrak{S}_n$.
A \emph{symmetric gauge function} is an absolute, monotone, symmetric norm on $\R^n$. As a first step in our analysis, we show that the properties of being absolute and monotone are equivalent.

\begin{lemma} \label{L:UIN-monotone=absolute}
A norm is absolute if and only if it is monotone.
\end{lemma}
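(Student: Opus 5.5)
The easy direction is monotone $\Rightarrow$ absolute. Since $|\vec{x}|$ and $\vec{x}$ have the same entrywise moduli, we have both $\big|\,|\vec{x}|\,\big| \leq_{\e} |\vec{x}|$ and $|\vec{x}| \leq_{\e} \big|\,|\vec{x}|\,\big|$. Applying monotonicity twice gives $\big\|\,|\vec{x}|\,\big\| = \|\vec{x}\|$.

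For absolute $\Rightarrow$ monotone, the plan is to first observe that an absolute norm satisfies $\|D\vec{x}\| = \|\vec{x}\|$ for every diagonal $D$ with unimodular diagonal entries. Indeed, $|D\vec{x}| = |\vec{x}|$, so both sides equal $\big\|\,|\vec{x}|\,\big\|$. Thus it suffices to prove that $\|\vec{x}\| \leq \|\vec{y}\|$ whenever $\vec{x},\vec{y} \in \R^n_+$ with $\vec{x} \leq_{\e} \vec{y}$. We then reduce to changing one coordinate at a time, by induction on the number of coordinates in which $\vec{x}$ and $\vec{y}$ differ. Pass from $\vec{y}$ to $\vec{x}$ through a chain of vectors $\vec{y} = \vec{z}^{(0)}, \vec{z}^{(1)}, \ldots, \vec{z}^{(n)} = \vec{x}$, where $\vec{z}^{(k)}$ agrees with $\vec{x}$ in the first $k$ coordinates and with $\vec{y}$ in the rest. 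Consecutive vectors differ only in coordinate $k$, where the entry shrinks from $y_k$ to $x_k \in [0, y_k]$. So it suffices to show that $\|\vec{z}\|$ does not increase when a single nonnegative coordinate is decreased toward $0$.

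For this key step, fix $\vec{z}$ with $z_k = t \geq 0$, and let $\vec{z}'$ be $\vec{z}$ with the $k$th entry replaced by $s \in [0,t]$. Let $\vec{w}$ be $\vec{z}$ with the $k$th entry negated to $-t$; by absoluteness $\|\vec{w}\| = \|\vec{z}\|$. Write $s = \lambda t + (1-\lambda)(-t)$ with $\lambda = \tfrac{1}{2}(1 + s/t) \in [\tfrac12, 1]$ when $t > 0$; the case $t = 0$ is trivial. Then $\vec{z}' = \lambda \vec{z} + (1-\lambda)\vec{w}$, and the triangle inequality gives
\begin{equation*}
\|\vec{z}'\| \leq \lambda \|\vec{z}\| + (1-\lambda)\|\vec{w}\| = \|\vec{z}\|.
\end{equation*}
Chaining these inequalities along $\vec{z}^{(0)}, \ldots, \vec{z}^{(n)}$ yields $\|\vec{x}\| \leq \|\vec{y}\|$.

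The only real obstacle is this convexity step: an absolute norm is a priori only invariant under sign changes, not monotone. The trick is to realize the shrunken coordinate as a convex combination of $\pm t$, which also motivates the inductive, coordinate-by-coordinate structure that the paper later reuses for Hardy--Littlewood--P\'olya. The argument works verbatim over $\R^n$ as well as over $\C^n$.
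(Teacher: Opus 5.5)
Your proof is correct and takes the same route as the paper's. You reduce to nonnegative vectors by absoluteness, pass between $\vec{x}$ and $\vec{y}$ through a chain that changes one coordinate at a time, and handle each step by writing the smaller entry as a convex combination of $\pm$ the larger one, then applying the triangle inequality and absoluteness; the easy direction (monotone $\Rightarrow$ absolute) is also identical to the paper's.
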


\begin{proof}
$(\Leftarrow)$ Let $\|\cdot\|$ be a norm on $\C^n$.
Let $\vec{x} \in \C^n$ and define $\vec{y}=|\vec{x}|$. Then $|\vec{x}| \leq_{\e} |\vec{y}|$ and $|\vec{y}| \leq_{\e} |\vec{x}|$. Therefore, if the norm is monotone, then $\|\vec{x}\|=\|\vec{y}\|$ and hence it is absolute.

\medskip\noindent$(\Rightarrow)$ Suppose that $\|\cdot\|$ is absolute.  By absoluteness, it suffices to consider $\vec{x} = [x_i]$ and $\vec{y} = [y_i] \in \R^n_{+}$
with $\vec{x} \leq_{\e} \vec{y}$. 
Let
\begin{equation*}
A = 
\begin{bmatrix}
x_1 & y_1 & y_1 & \cdots & y_1 & y_1 & y_1 \\
x_2 & x_2 & y_2 & \cdots & y_2 & y_2 & y_2 \\
x_3 & x_3 & x_3 & \cdots & y_3 & y_3 & y_3 \\
\vdots & \vdots & \vdots& \ddots & \vdots & \vdots & \vdots \\
x_{n-1} & x_{n-1} & x_{n-1} & \cdots & x_{n-1} & y_{n-1} & y_{n-1}\\
x_n & x_n & x_n & \cdots & x_n &  x_{n} & y_{n}\\
\end{bmatrix} \in \M_{n \times (n+1)}
\end{equation*}
and let $\vec{a}_1,\vec{a}_2,\ldots,\vec{a}_{n+1}$ denote the columns of $A$, so that $\vec{a}_1 = \vec{x}$ and $\vec{a}_{n+1} = \vec{y}$.  Then
$\vec{a}_j \leq_{\e} \vec{a}_{j+1}$ for all $1 \leq j \leq n$. From one column to the next, only one entry increases. Thus, it suffices to show that $\|\vec{a}_j\| \leq \|\vec{a}_{j+1}\|$ for $1 \leq j \leq n$. 

If $x_1=y_1$, the result is trivial. Suppose that $x_1<y_1$. We write $x_1$ as a convex combination of $y_1$ and $-y_1$:
\begin{equation*}
x_1 = t y_1 + (1-t) (-y_1),
\end{equation*}
in which $t = \frac{x_1+y_1}{2y_1}$ satisfies $0 < t < 1$. This allows us to write
\begin{equation*}
\begin{bmatrix}
x_1 \\
x_2 \\
\vdots\\
x_n
\end{bmatrix}
=
t\begin{bmatrix}
y_1 \\
x_2 \\
\vdots\\
x_n
\end{bmatrix}
+
(1-t)
\begin{bmatrix}
-y_1 \\
\phantom{-} x_2 \\
\phantom{-}\vdots\\
\phantom{-}x_n
\end{bmatrix}.
\end{equation*}
Therefore,
\begin{equation*}
\left\|\begin{bmatrix}
x_1 \\
x_2 \\
\vdots\\
x_n
\end{bmatrix}\right\|
\leq
t \left\|\begin{bmatrix}
y_1 \\
x_2 \\
\vdots\\
x_n
\end{bmatrix}\right\|
+
(1-t)
\left\|\begin{bmatrix}
-y_1 \\
\phantom{-}x_2 \\
\phantom{-}\vdots\\
\phantom{-}x_n
\end{bmatrix}\right\|.
\end{equation*}
Since the norm is absolute, the last two norms are equal
and hence $\|\vec{a}_1\| \leq \|\vec{a}_2\|$. Proceeding in a similar manner $n-1$ times yields $\|\vec{a}_j\| \leq \|\vec{a}_{j+1}\|$ as required.  Therefore, $\|\vec{x}\| = \| \vec{a}_1\| \leq \| \vec{a}_{n+1} \| = \| \vec{y}\|$.
\end{proof}

\section{Majorization and the Hardy--Littlewood--P\'{o}lya theorem} \label{S:UIN-majoraization}
The main purpose of this section and the next two is to establish Rad\'o’s result (Theorem \ref{T:UIN-Rado}), which is a major ingredient in the proof of the monotonicity theorem (Theorem \ref{T:UIN-monotonicity}). However, the path is long and requires several intermediate results, each of which is interesting in its own right.
For each $\vec{x} = [x_i] \in \R^n$, define
\begin{equation}\label{E:UIN-decreasing-order-x}
\vec{x}\!\!\downarrow \,\,:=
\begin{bmatrix}
x_{[1]} \\
x_{[2]} \\
\vdots\\
x_{[n]}
\end{bmatrix} \in \R^n,
\end{equation}
in which 
\begin{equation*}
x_{[1]} \geq x_{[2]} \geq \cdots \geq x_{[n]}.    
\end{equation*}
We say that $\vec{x} \in \R^n$ is \emph{weakly majorized} by $\vec{y} \in \R^n$, denoted $\vec{x} \prec_{\w} \vec{y}$, if
\begin{equation*}
\sum_{i=1}^{k} x_{[i]} \leq \sum_{i=1}^{k} y_{[i]}
\end{equation*}
for each $1 \leq k \leq n$.
If, in addition, equality holds for $k=n$, that is, if
\begin{equation*}
\sum_{i=1}^{n} x_{i} = \sum_{i=1}^{n} y_{i},
\end{equation*}
then $\vec{x}$ is \emph{majorized} by $\vec{y}$, denoted $\vec{x}\prec \vec{y}$. For example, if $\alpha_i \geq 0$ and $\sum_{i=1}^{n} \alpha_{i} =1$, then 
\begin{equation*}
\left( \frac{1}{n}, \, \frac{1}{n}, \, \dots, \, \frac{1}{n} \right)
\prec
(\alpha_1,\alpha_2,\ldots,\alpha_n)
\prec(1,0,\ldots,0).
\end{equation*}

A matrix $A=[a_{ij}] \in \M_n$ is \emph{doubly stochastic} if each $a_{ij} \geq 0$ and if
\begin{equation*}
\sum_{k=1}^{n} a_{ik} = 1
\quad\text{and}\quad
\sum_{k=1}^{n} a_{kj} = 1
\end{equation*}
for all $1 \leq i,j \leq n$; that is,
the entries are nonnegative and each row and column sums to $1$.  We denote the set of all $n \times n$ doubly stochastic matrices by $\DS_n$. Then $A \in \DS_n$ if and only if $A\vec{1} = \vec{1}$ and $A^{\T}\vec{1} = \vec{1}$.  Therefore, $\DS_n$ is closed under matrix multiplication. 

Given $\sigma \in \mathfrak{S}_n$, define $P_{\sigma}=[p_{ij}] \in \M_n$ by
\begin{equation*}
p_{ij} =
\begin{cases}
1 & \text{if $\sigma(i)=j$},\\
0 & \text{if $\sigma(i) \neq j$}.
\end{cases} 
\end{equation*}
Each such \emph{permutation matrix} is doubly stochastic.  In fact, each row and each column contains exactly one nonzero element, namely $1$. The operation $A \mapsto P_{\sigma}A$ interchanges the rows of $A$ and the operation $A \mapsto AP_{\sigma}$ interchanges the columns of $A$. Given $\vec{x} \in \R^n$, there is a $\sigma \in S_n$ such that 
$\vec{x}\!\!\downarrow\,\,  = P_{\sigma}\vec{x}$; see \cite{MR1449393, MR932967, MR389944}.

The following result of Hardy--Littlewood--P\'{o}lya establishes the connection between majorization and doubly stochastic matrices. It was originally published in their 1929 paper \cite{HLP-1929} and later refined in their 1934 book \cite{MR944909}.

\begin{theorem}[Hardy--Littlewood--P\'{o}lya] \label{T:UIN-Hardy-Littlewood-Polya}
Let $\vec{x},\vec{y} \in \R^n$. Then $\vec{x} \prec \vec{y}$ if and only if there is an $A \in \DS_n$ such that $\vec{x} = A\vec{y}$.
\end{theorem}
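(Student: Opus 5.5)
The plan is to prove the two directions separately. The reverse implication is a direct computation. The forward implication goes by induction on $n$, built from elementary ``T-transforms'' in the spirit of the column-by-column argument in Lemma \ref{L:UIN-monotone=absolute}.

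\medskip\noindent$(\Leftarrow)$ Suppose that $\vec{x}=A\vec{y}$ with $A\in\DS_n$. Because permutation matrices are doubly stochastic and $\DS_n$ is closed under multiplication, I may replace $\vec{x},\vec{y}$ by $\vec{x}\!\!\downarrow,\vec{y}\!\!\downarrow$ and $A$ by $P A Q$. So I assume both vectors are decreasing. Since $\vec{1}^{\T}A=\vec{1}^{\T}$, the totals agree: $\sum x_i=\sum y_i$.

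Fix $k$ and put $c_j=\sum_{i=1}^k a_{ij}$. Then $0\le c_j\le 1$ and $\sum_j c_j=k$, and
\begin{equation*}
\sum_{i=1}^k x_i-\sum_{i=1}^k y_i=\sum_{j}c_jy_j-\sum_{j\le k}y_j=\sum_{j\le k}(c_j-1)(y_j-y_k)+\sum_{j>k}c_j(y_j-y_k)\le 0 .
\end{equation*}
The middle equality uses $\sum_j c_j=k$ to insert the term $y_k$. Each summand on the right is nonpositive because $y$ is decreasing. This gives $\vec{x}\prec\vec{y}$.

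\medskip\noindent$(\Rightarrow)$ I induct on $n$; the case $n=1$ is trivial. Again by permutations I may assume $\vec{x},\vec{y}$ are decreasing with $\vec{x}\prec\vec{y}$. Then $y_n\le x_1\le y_1$: the upper bound is the $k=1$ inequality, and the lower bound holds because $x_1\ge$ average $=$ average of $\vec{y}\ge y_n$. So there is an index $k$ with $y_{k}\ge x_1\ge y_{k+1}$ (take $k=1$ if $n$ is small, with care at the ends).

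Write $x_1=ty_1+(1-t)y_{k+1}$ with $t\in[0,1]$, as in Lemma \ref{L:UIN-monotone=absolute}. Let $T$ be the T-transform $tI+(1-t)Q$, where $Q$ swaps coordinates $1$ and $k+1$. Then $T\in\DS_n$, and $\vec{y}'=T\vec{y}$ has first coordinate $x_1$ and $(k+1)$-st coordinate $y_1+y_{k+1}-x_1$, with all other coordinates unchanged.

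Let $\tilde{\vec{x}},\tilde{\vec{y}}\in\R^{n-1}$ be $\vec{x}$ and $\vec{y}'$ with the first coordinate deleted. The key claim is $\tilde{\vec{x}}\prec\tilde{\vec{y}}$. Granting it, induction gives $B\in\DS_{n-1}$ with $\tilde{\vec{x}}=B\tilde{\vec{y}}$. Then $A=(1\oplus B)T$ is doubly stochastic and satisfies $A\vec{y}=\vec{x}$.

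\medskip\noindent\textbf{Main obstacle.} The step I expect to be hardest is verifying $\tilde{\vec{x}}\prec\tilde{\vec{y}}$, since $\tilde{\vec{y}}$ need not be decreasing. I would first check that the totals match, which is immediate. I would then verify the partial-sum inequalities using the arrangement $y_2,\dots,y_k,\;y_1+y_{k+1}-x_1,\;y_{k+2},\dots$. This arrangement is already decreasing, because $y_1+y_{k+1}-x_1$ lies between $y_{k+1}$ and $y_k$ when $y_k\ge x_1\ge y_{k+1}$.

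The verification splits into two cases. For $m\le k-1$, I use $\sum_{i=2}^{m+1}x_i\le m x_1\le \sum_{i=2}^{m+1}y_i$, which holds since $x_1\le y_k\le y_i$ for $i\le k$ and $x$ is decreasing. For $m\ge k$, the partial sum of $\tilde{\vec{y}}$ equals $\sum_{i=1}^{m+1}y_i-x_1$, and the inequality follows from the original majorization at level $m+1$.
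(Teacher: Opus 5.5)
Your proof is essentially the paper's proof. Both use the same T-transform (your $tI+(1-t)Q$ is the paper's $A_1=tI+(1-t)P_{(1k)}$ with the index shifted by one), the same induction through $\diag(1,B)$, the same two-case partial-sum check, and the same $t_j^{(m)}$ computation for the converse.

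One sentence in your verification is false, however. The entry $y_1+y_{k+1}-x_1$ is at least $y_{k+1}$, but it need not be at most $y_k$. So the arrangement $y_2,\dots,y_k,\,y_1+y_{k+1}-x_1,\,y_{k+2},\dots$ need not be decreasing. For example, take $\vec{y}=(10,6,4,0)$ and $\vec{x}=(5,5,5,5)$. These force $k=2$ and $t=1/6$, and the new entry is $9>y_2=6$.

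You do not need that claim. Since $\tilde{\vec{x}}$ is decreasing, it suffices to bound its partial sums by the partial sums of any arrangement of $\tilde{\vec{y}}$. This works because the sum of the first $m$ entries of any arrangement is at most the sum of the $m$ largest entries. Your two cases ($m\le k-1$ and $m\ge k$) establish exactly these bounds. This is how the paper's verification proceeds; it never asserts that the rearranged vector is sorted.

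Drop the sentence about the arrangement being decreasing and replace it with this observation, and your proof is complete.
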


\begin{proof}
$(\Rightarrow)$
Suppose that $\vec{x} \prec \vec{y}$. We use a procedure similar to that in the proof of Lemma \ref{L:UIN-monotone=absolute}. First, we multiply the vector $\vec{y}$ by a suitable $A_1 \in \DS_n$ such that
\begin{equation}\label{E:UIN-matrix A1}
A_1
\begin{bmatrix}
y_1 \\
y_2 \\
\vdots\\
y_n
\end{bmatrix}
=
\begin{bmatrix}
x_1 \\
y_2' \\
\vdots\\
y_n'
\end{bmatrix}.
\end{equation}
Moreover, $A_1$ must be chosen such that $[y_2'~y_3'~\ldots~y_n']^{\T}$ satisfies
\begin{equation}\label{E:UIN-matrix A12}
\begin{bmatrix}
x_2 \\
\vdots\\
x_n
\end{bmatrix}
\prec
\begin{bmatrix}
y_2' \\
\vdots\\
y_n'
\end{bmatrix}.
\end{equation}
If this can be established, then the rest follows by induction.  


Let us explain how $A_1$ is chosen. Without loss of generality, suppose that $x_1 \geq x_2 \geq \cdots \geq x_n$ and $y_1 \geq y_2 \geq \cdots \geq y_n$. Indeed, otherwise there are permutations $\sigma,\tau \in \mathfrak{S}_n$ such that $\vec{x}\!\!\downarrow\,\, = P_{\sigma}\vec{x}$ and $\vec{y}\!\!\downarrow\,\, = P_{\tau}\vec{y}$.  Then we can establish the result for $\vec{x}\!\!\downarrow$ and $\vec{y}\!\!\downarrow$ instead. Therefore, there will be an $A\in \DS_n$ such that $\vec{x}\!\!\downarrow\,\, = A \vec{y}\!\!\downarrow$ and hence $\vec{x} = P_{\sigma}^{\T}AP_{\tau}\vec{y}$. Note that $P_{\sigma}^{\T}AP_{\tau}$ is doubly stochastic.

Since $\vec{x} \prec \vec{y}$, we have $y_n \leq x_1 \leq y_1$ because the entries of $\vec{x}$ and $\vec{y}$ are decreasing; the lower inequality follows from the equality of the total sums and the ordering of the terms. Thus, there is a $2 \leq k \leq n$ such that $y_k \leq x_1 \leq y_{k-1}$, so
\begin{equation}\label{E:UIN-matrix 3}
x_1 = ty_1+(1-t)y_k,
\end{equation}
in which $0 \leq t \leq 1$. Let $A_1=[a_{ij}] \in \M_n$ with
\begin{equation*}
    a_{ij} =
    \begin{cases}
        t & \text{if $(i,j) \in  \{(1,1), (k,k)\}$},\\
        1-t & \text{if $(i,j) \in \{(1,k),(k,1)\}$},\\
        1 & \text{if $i =j \notin \{1,k\}$},\\
        0 & \text{otherwise}.
    \end{cases}
\end{equation*}
In terms of permutations, $A_1=tI+(1-t)P_{(1k)}$, in which $(1k)\in \mathfrak{S}_n$ denotes a transposition. One can now verify that \eqref{E:UIN-matrix A1} holds. More explicitly, \eqref{E:UIN-matrix 3} ensures that
\begin{equation}\label{E:UIN-matrix 4}
A_1
\begin{bmatrix}
y_1 \\
y_2 \\
\vdots\\
y_{k-1} \\
y_{k} \\
y_{k+1} \\
\vdots\\
y_n
\end{bmatrix}
=
\begin{bmatrix}
\color{blue}x_1 \\
y_2 \\
\vdots\\
y_{k-1} \\
\color{blue} (1-t)y_1+ty_{k} \\
y_{k+1} \\
\vdots\\
y_n
\end{bmatrix}.
\end{equation}
This observation facilitates the verification of \eqref{E:UIN-matrix A12}. In other words, we must show that
\begin{equation*}
\begin{bmatrix}
x_2 \\
\vdots\\
x_{k-1} \\
x_{k} \\
x_{k+1} \\
\vdots\\
x_n
\end{bmatrix}
\prec
\begin{bmatrix}
y_2 \\
\vdots\\
y_{k-1} \\
(1-t)y_1+ty_{k} \\
y_{k+1} \\
\vdots\\
y_n
\end{bmatrix}.
\end{equation*}

Since $y_1 \geq \cdots \geq y_{k-1} \geq x_1 \geq x_2 \geq \cdots \geq x_n$, we have
\begin{equation*}
\sum_{j=2}^{m} x_j \leq (m-1)x_1 \leq \sum_{j=2}^{m} y_j
\end{equation*}
for $m=2,3,\ldots,k-1$. Then for $m=k,k+1,\ldots,n$, we have
\begin{align*}
\sum_{j=2}^{m} x_j &= -x_1 + \sum_{j=1}^{m} x_j\\
&\leq -x_1 + \sum_{j=1}^{m} y_j && (\text{since $x\prec y$})\\
&= -ty_1-(1-t)y_k + \sum_{j=1}^{m} y_j &&  (\text{by \eqref{E:UIN-matrix 3}})\\
&= \sum_{j=2}^{k-1} y_j +\big((1-t)y_1+ty_k\big) + \sum_{j=k+1}^{m} y_j.
\end{align*}
If $m=n$, the inequality above is an equality.  Thus, \eqref{E:UIN-matrix A12} holds.

We can sum up using an induction argument. By induction hypothesis (which trivially holds for the case $n=1$), there is a $B \in \DS_{n-1}$ such that
\begin{equation*}
B
\begin{bmatrix}
y_2 \\
\vdots\\
y_{k-1} \\
(1-t)y_1+ty_{k} \\
y_{k+1} \\
\vdots\\
y_n
\end{bmatrix}
=
\begin{bmatrix}
x_2 \\
\vdots\\
x_{k-1} \\
x_{k} \\
x_{k+1} \\
\vdots\\
x_n
\end{bmatrix}.
\end{equation*}
Let $C = \diag (1,B) \in \M_n$ and observe that
\begin{equation*}
C
\begin{bmatrix}
x_1\\
y_2 \\
\vdots\\
y_{k-1} \\
(1-t)y_1+ty_{k} \\
y_{k+1} \\
\vdots\\
y_n
\end{bmatrix}
=
\begin{bmatrix}
x_1\\
x_2 \\
\vdots\\
x_{k-1} \\
x_{k} \\
x_{k+1} \\
\vdots\\
x_n
\end{bmatrix}.
\end{equation*}
According to \eqref{E:UIN-matrix 4}, we have $\vec{x} = A\vec{y}$ with $A=CA_1$.

\medskip\noindent$(\Leftarrow)$ Suppose that there is an $A=[a_{ij}] \in \DS_n$ such that $\vec{x}=A\vec{y}$. Without loss of generality, we assume that $\vec{x}=\vec{x}\!\!\downarrow$ and $\vec{y}=\vec{y}\!\!\downarrow$ by the same permutation argument used above. For $m=1,2,\ldots,n$, we have
\begin{equation*}
\sum_{i=1}^{m} x_i
= \sum_{i=1}^{m} \bigg( \sum_{j=1}^{n} a_{ij} y_j \bigg)
= \sum_{j=1}^{n} \bigg( \sum_{i=1}^{m} a_{ij} \bigg) y_j.
\end{equation*}
Let
\begin{equation*}
t_j^{(m)} = \sum_{i=1}^{m} a_{ij}
\end{equation*}
for $1 \leq j \leq n$. Then note that $0 \leq t_j^{(m)} \leq 1$ and
\begin{equation*}
\sum_{j=1}^{n} t_j^{(m)} 
= \sum_{j=1}^{n} \bigg( \sum_{i=1}^{m} a_{ij} \bigg) 
= \sum_{i=1}^{m} \bigg(  \sum_{j=1}^{n} a_{ij} \bigg) 
=\sum_{i=1}^{m}  1 = m
\end{equation*}
since each $a_{ij} \geq 0$ and the rows of $A$ sum to $1$. Then
\begin{align*}
\sum_{i=1}^{m} x_i &= \sum_{j=1}^{n} t_j^{(m)} y_j\\
&= \sum_{j=1}^{n} t_j^{(m)} (y_j-y_m) + my_m\\
&= \sum_{j=1}^{m} t_j^{(m)} (y_j-y_m) + \sum_{j=m+1}^{n} t_j^{(m)} (y_j-y_m) + \sum_{j=1}^{m} y_m,
\end{align*}
which gives
\begin{equation*}
\sum_{i=1}^{m} x_i-\sum_{i=1}^{m} y_i = \sum_{j=1}^{m} (t_j^{(m)}-1) (y_j-y_m) + \sum_{j=m+1}^{n} t_j^{(m)} (y_j-y_m) \leq 0.
\end{equation*}
If $m=n$, we have $t_j^{(n)}=1$ for each $j$ since
the columns of $A$ sum to $1$.  Thus,
\begin{equation*}
\sum_{i=1}^{n} x_i = \sum_{i=1}^{n} y_i
\end{equation*}
and hence $\vec{x} \prec \vec{y}$.
\end{proof}

\section{K\"{o}nig's term-rank theorem} \label{S:UIN-konig}
Let $A \in \M_{m \times n}$. A row or a column of $A$ is a \emph{line}. A set of lines \emph{covers} $A$ if the set contains all nonzero elements of $A$. In general, any such set consists of some rows and some columns. However, one could choose only rows or only columns. For example, a line set consisting of all $n$ columns of $A$ covers $A$. However, there are other options. The minimum number of lines that covers $A$ is the \emph{line rank} of $A$, denoted $\ell(A)$. A minimal covering is \emph{proper} if it does not consist of all columns and it does not consist of all rows. For example, for the $2\times 2$ identity matrix 
\begin{equation*}
I_2 = [\vec{c}_1\,\,\vec{c}_2] 
= \Big[ \begin{smallmatrix} \vec{r}_1^{\T} \\ \vec{r}_2^{\T} \end{smallmatrix} \Big],    
\end{equation*}
the coverings $\{\vec{r}_1^{\T},\vec{r}_2^{\T}\}$, $\{\vec{c}_1,\vec{c}_2\}$, $\{\vec{r}^{\T}_1,\vec{c}_2\}$, are minimal; the last one is proper.

A related concept is the \emph{term rank} of $A$, denoted by $\tau(A)$, which is the maximum number of nonzero entries in $A$ with no two of them on the same line. The definitions imply that $\tau(A) \leq \ell(A)$. A result of K\"{o}nig ensures that equality holds, a profound observation with interesting consequences. He obtained precursor results in 1916 \cite{Konig-1}. The following result appeared in 1931 \cite{Konig-2}.

\begin{theorem}[K\"{o}nig] \label{T:UIN-konig}
For every $A \in \M_{m \times n}$, 
\begin{equation*}
\tau(A) = \ell(A).
\end{equation*}
\end{theorem}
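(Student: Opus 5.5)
The inequality $\tau(A) \leq \ell(A)$ is already noted: each line of a covering contains at most one entry of an independent set of nonzero entries. So we will prove $\ell(A) \leq \tau(A)$, by strong induction on $m+n$. The case where $A$ is zero, or where $m=1$ or $n=1$, is immediate. The hint in the text about \emph{proper} minimal coverings suggests splitting into two cases, according to whether $A$ admits a proper minimal covering.

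\emph{Case 1: some minimal covering is proper.} Suppose it consists of $r$ rows and $c$ columns with $r+c=\ell(A)$, $1\le r<m$ and $1\le c<n$. After permuting rows and columns, let these be the first $r$ rows and first $c$ columns, so the lower right $(m-r)\times(n-c)$ block is zero. Consider the submatrices $A_1$ consisting of rows $1,\dots,r$ and columns $c+1,\dots,n$, and $A_2$ consisting of rows $r+1,\dots,m$ and columns $1,\dots,c$. These are strictly smaller, so the induction hypothesis applies to them.

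We claim $\ell(A_1)=r$. The $r$ rows cover $A_1$, so $\ell(A_1)\le r$. Conversely, a covering of $A_1$ with fewer lines, together with the $c$ columns $1,\dots,c$, would cover $A$ with fewer than $\ell(A)$ lines. Here a column of $A_1$ is extended to the full column of $A$, which is harmless since rows $r+1,\dots,m$ vanish in columns $c+1,\dots,n$. The same argument gives $\ell(A_2)=c$. By induction, $\tau(A_1)=r$ and $\tau(A_2)=c$. Since $A_1$ and $A_2$ share no rows and no columns, the two independent sets combine into an independent set of $r+c$ nonzero entries of $A$. Hence $\tau(A)\ge\ell(A)$.

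\emph{Case 2: no minimal covering is proper.} Then $\ell(A)=\min(m,n)$; say $\ell(A)=m\le n$, so every covering using at least one column uses at least $m$ lines. Pick a nonzero entry $a_{ij}$, which exists since $A\neq 0$. Let $B$ be $A$ with row $i$ and column $j$ deleted. If $B$ were covered by at most $m-2$ lines, adding row $i$ and column $j$ would give a covering of $A$ with at most $m$ lines that includes a column. We would need this to be a minimal covering in order to call it proper, which fails if its size is less than $m$. That is impossible, since $\ell(A)=m$; if it equals $m$, it is a proper minimal covering, a contradiction. One must also check that such a covering does not consist of all rows; it contains column $j$, and it cannot be all $m$ rows plus a column since that has $m+1$ lines.

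Hence $\ell(B)\ge m-1$. By induction $\tau(B)\ge m-1$, and adjoining $a_{ij}$ gives $\tau(A)\ge m$.

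The main obstacle is the bookkeeping in Case 2, namely ensuring the constructed covering is genuinely proper. The edge cases $m=1$ and $B$ being zero also need care. I would handle them by allowing empty matrices, with $\tau=\ell=0$, in the induction.
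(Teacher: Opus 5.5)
Your overall argument is the paper's own, with the two cases in the opposite order. When a proper minimal covering exists you use a block decomposition with $\ell(A_1)=r$ and $\ell(A_2)=c$. When none exists you delete the row and column through a nonzero $a_{ij}$.

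There is, however, a hole in your Case 1. Its hypothesis is that some minimal covering is proper, but you then assume that covering has $1\le r$ and $1\le c$. Properness only excludes the set of all rows and the set of all columns. So a proper minimal covering may consist of fewer than $m$ rows and no columns, or fewer than $n$ columns and no rows. Such matrices are handled by neither case. Your Case 1 argument is not carried out for them. Your Case 2 hypothesis fails for them, and its conclusion $\ell(A)=\min(m,n)$ is false. A concrete example is $\begin{bmatrix}1&1\\0&0\end{bmatrix}$: its only minimal covering is the first row, so $r=1$, $c=0$, and $\ell(A)=1<2=\min(m,n)$.

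The paper treats exactly this subcase separately (its $p=0$ or $q=0$). The lines outside the covering are zero, so deleting them leaves a strictly smaller matrix with the same $\tau$ and $\ell$, and induction applies.

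Within your own framework the fix is even simpler: drop the restriction $r,c\ge 1$. With the empty-matrix convention you already adopt, one of $A_1,A_2$ is empty with $\tau=\ell=0$. The other is strictly smaller because $r<m$ and $c<n$. Your argument that $\ell(A_1)=r$ and $\ell(A_2)=c$ then goes through verbatim.

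Alternatively, split according to whether some minimal covering uses both a row and a column. Your Case 2 deletion argument then works with $\ell(A)$ in place of $m$, because the augmented covering contains row $i$ and column $j$.

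With either repair, the proof is complete.
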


\begin{proof}
It suffices to prove that $\tau(A) \geq \ell(A)$. We induct on $m+n$. The result holds for $m=1$ (row vectors), $n=1$ (column vectors), and zero matrices. So assume that $m \geq 2$ and $n \geq 2$ and that the result holds for matrices whose total number of rows and columns is at most $m+n-1$. There are two cases.

\medskip
\noindent\textsc{Case 1}: Suppose that $A=[a_{ij}]$ does not have a proper minimal covering.  Then $\ell(A) = \min\{m,n\}$. Pick any entry $a_{ij} \neq 0$ and let $A' \in \M_{(m-1) \times (n-1)}$ be 
obtained from $A$ by deleting its $i$th row and $j$th column. Hence,
\begin{equation*}
\ell(A') \leq \min\{m-1,n-1\} = \ell(A)-1.
\end{equation*}
We must have $\ell(A') = \ell(A)-1$ since if $\ell(A') \leq \ell(A)-2$, then a minimal covering of $A'$ augmented by the two deleted lines of $A$ would give a proper covering of $A$, contrary to our assumption (this covering of $A$ would have at most $m-1$ rows, and at most $n-1$ columns). By the induction hypothesis, $\tau(A')=\ell(A')=\ell(A)-1$. Picking $\tau(A')$ elements of $A'$ so that no two of them are on the same line of $A'$ and adding $a_{ij}$ to the collection, we obtain $1+\tau(A')$ elements of $A$, no two on the same line of $A$. Hence, $\tau(A) \geq 1+\tau(A')=\ell(A)$.

\medskip\noindent\textsc{Case 2}: Assume that $A$ has a proper minimal covering that consists of $p$ rows and $q$ columns, in which $\ell(A)=p+q$, $p<m$, $q<n$, and $p+q \leq \min\{m,n\}$. If $p=0$ or $q=0$, then the result follows from the induction hypothesis. For example, if $p=0$, the $q$ selected columns contain all nonzero entries; delete the remaining zero columns and apply induction to the resulting $m\times q$ matrix, whose line rank is $q$. The $q=0$ case is analogous.

If $p \geq 1$ and $q \geq 1$, we multiply $A$ by permutation matrices on the left and right so that the $p$ rows and the $q$ columns of the proper minimal covering appear as the first $p$ rows and the first $q$ columns. This action does not change the term rank of $A$. Thus, $A$ takes the form
\begin{equation*}
\begin{bmatrix}
* & B\\
C & 0
\end{bmatrix},
\end{equation*}
in which $B\in \M_{p \times (n-q)}$ and $C \in \M_{(m-p) \times q}$.  The lower-right corner is an $(m-p)\times(n-q)$ zero matrix. Recall that $p \leq n-q$ and $q \leq m-p$. We must have $\ell(B) = p$ and $\ell(C)=q$, since if, for example, $\ell(B)<p$, then we can cover $A$ by the first $q$ columns and another $\ell(B)$ lines to cover $B$, a total of less than $p+q$ lines, a contradiction. Therefore, by the induction hypothesis $\tau(B)=p$ and $\tau(C)=q$, giving
\begin{equation*}
\tau(A) \geq \tau(B)+\tau(C)=p+q=\ell(A). \qedhere
\end{equation*}
\end{proof}

Let $A=[a_{ij}] \in \M_n$. For each $\sigma \in S_n$, the sequence
\begin{equation*}
(a_{1\sigma(1)},\, a_{2\sigma(2)},\, \dots, \, a_{n\sigma(n)})
\end{equation*}
is a \emph{transversal} of $A$. If $\sigma=\mathrm{id}$, we obtain the diagonal of $A$.  Thus, we think of transversals as  generalized diagonals of $A$.

\begin{corollary} \label{C:UIN-konig}
Let $A=[a_{ij}] \in \M_n$. Suppose that for every $r \times s$ zero submatrix of $A$ we have $r+s \leq n$. Then $A$ has a transversal with nonzero entries.
\end{corollary}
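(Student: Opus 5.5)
We will deduce the corollary from K\"{o}nig's theorem (Theorem \ref{T:UIN-konig}). A transversal of $A$ with nonzero entries is exactly a set of $n$ nonzero entries of $A$ with no two on the same line. Since no such set can have more than $n$ elements, the existence of such a transversal is equivalent to $\tau(A) = n$. By Theorem \ref{T:UIN-konig}, it therefore suffices to show that $\ell(A) \geq n$; that is, no collection of fewer than $n$ lines covers $A$. (Trivially $\ell(A) \leq n$, since the $n$ rows cover $A$.)

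We argue by contradiction. Suppose that $p$ rows and $q$ columns cover $A$ with $p + q \leq n-1$. Let $R$ be the set of the $m - p = n-p$ rows not chosen and $S$ the set of the $n-q$ columns not chosen. Every nonzero entry of $A$ lies in one of the chosen lines. Hence every entry $a_{ij}$ with $i \in R$ and $j \in S$ is zero, and the submatrix of $A$ with rows $R$ and columns $S$ is an $r \times s$ zero submatrix with $r = n-p$ and $s = n-q$. Then
\begin{equation*}
r + s = 2n - (p+q) \geq 2n - (n-1) = n+1 > n,
\end{equation*}
which contradicts the hypothesis. Therefore $\ell(A) = n$, so $\tau(A) = n$ by Theorem \ref{T:UIN-konig}.

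To finish, we take $n$ nonzero entries $a_{i_1 j_1}, \ldots, a_{i_n j_n}$ with no two in the same row or column. The row indices are then all of $\{1, \ldots, n\}$, and so are the column indices. Thus $\sigma(i_k) := j_k$ defines a permutation $\sigma \in \mathfrak{S}_n$, and $(a_{1\sigma(1)}, \ldots, a_{n\sigma(n)})$ is a transversal with nonzero entries.

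The only step needing care is the degenerate case in which $r$ or $s$ is $0$, that is, all rows or all columns are chosen. This cannot occur when $p+q \leq n-1$, since then $p, q < n$ and both $R$ and $S$ are nonempty, so the zero submatrix is genuine. Everything else is immediate from Theorem \ref{T:UIN-konig}, which carries the real content.
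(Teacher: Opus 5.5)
Your proof is correct and is essentially the paper's argument run directly instead of by contradiction. The paper assumes there is no nonzero transversal, so $\tau(A)\leq n-1$ and hence $\ell(A)\leq n-1$ by K\"{o}nig's theorem; deleting the lines of a minimal covering then leaves a zero submatrix with $r+s=2n-\ell(A)\geq n+1$, which is exactly your complement-of-a-cover step (and the stray ``$m-p$'' should just read $n-p$, since $m=n$ here).
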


\begin{proof}
Suppose that every transversal of $A$ has at least one zero entry. Hence, $\tau(A)\leq n-1$. Theorem \ref{T:UIN-konig} ensures that $\ell(A) \leq n-1$. Deleting the $\ell(A)$ lines of a minimal covering, we obtain an $r \times s$ zero submatrix with
\begin{equation*}
r+s = 2n-\ell(A) \geq n+1.
\end{equation*}
This is a contradiction. Therefore, there must be at least one transversal with nonzero entries.
\end{proof}

\section{Convexity: the Birkhoff and Rad\'o theorems} \label{S:UIN-birkhoff}
Let $S \subseteq \R^n$ be convex. A point $\vec{x} \in S$ is an \emph{extreme point} of $S$
if it is not in the interior of any nondegenerate line segment in $S$.  That is, 
if $\vec{y},\vec{z} \in S$ and $\vec{x}=t \vec{y}+(1-t)\vec{z}$ with $0<t<1$, then $\vec{x}=\vec{y}=\vec{z}$. The set of extreme points of $S$ is denoted by $\ex(S)$. Note that $\ex(S)$ might be empty; for example, consider the open unit ball. The Krein--Milman theorem ensures that if $S$ is nonempty, compact, and convex, then $\ex(S)$ is not empty and rich enough to reconstruct $S$ in the following sense. The set of all convex combinations of elements of $S$ is its \emph{convex hull}, and is denoted $\conv(S)$. If $K \subset \R^n$ is compact and convex, the simplest version of the Krein--Milman theorem \cite{MR4990} says that $\ex(K) \neq \varnothing$ and, moreover, $\conv(\ex(K)) = K$. The Krein--Milman theorem has several variants and generalizations \cite{MR1068530, MR1157815}.

Recall that $\DS_n$ denotes the set of all $n \times n$ doubly stochastic matrices. Let $\Perm_n$ be the set of all $n \times n$ permutation matrices.  Then $\Perm_n \subset \DS_n$ and $\DS_n$ is a compact convex subset of $\R^{n^2}$. The Krein--Milman theorem ensures that $\DS_n$ is the convex hull of its extreme points. Birkhoff's result further clarifies this fact \cite{MR20547}; see also \cite{MR153038}.

\begin{theorem}[Birkhoff \cite{MR20547}] \label{T:UIN-birkhoff}
We have
\begin{equation*}
\ex(\DS_n) = \Perm_n,
\end{equation*}
and hence every doubly stochastic matrix is a convex combination of permutation matrices.
\end{theorem}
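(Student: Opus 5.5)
We prove two inclusions, $\Perm_n \subseteq \ex(\DS_n)$ and $\ex(\DS_n) \subseteq \Perm_n$. The second sentence of the theorem then follows at once from the Krein--Milman theorem, since $\DS_n$ is compact and convex. As an alternative that avoids Krein--Milman, we will also give a direct induction on the number of nonzero entries; this second route yields an explicit finite convex combination of permutation matrices.

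\emph{Permutation matrices are extreme.} Let $P = P_\sigma = tA + (1-t)B$ with $A,B \in \DS_n$ and $0<t<1$. Wherever $P$ has a zero entry, the corresponding entries of $A$ and $B$ are nonnegative and combine convexly to $0$, so both must vanish. Every row of $A$ therefore has at most one nonzero entry, located where $P$ has its $1$. Since each row of $A$ sums to $1$, that entry equals $1$, so $A=P$, and likewise $B=P$.

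\emph{Every doubly stochastic matrix is a convex combination of permutation matrices.} This is the main step, and Corollary \ref{C:UIN-konig} is the key input. Let $A \in \DS_n$, and suppose $A$ has an $r\times s$ zero submatrix. Permute rows and columns so that this block sits in rows $1,\ldots,r$ and columns $1,\ldots,s$. The first $r$ rows have total sum $r$, and all of that mass lies in columns $s+1,\ldots,n$. Those columns have total sum $n-s$, so $r \leq n-s$, that is, $r+s\leq n$. Corollary \ref{C:UIN-konig} then gives a permutation $\sigma$ with $a_{i\sigma(i)}>0$ for every $i$.

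Set $\lambda=\min_i a_{i\sigma(i)} \in (0,1]$. If $\lambda=1$, then $A=P_\sigma$. Otherwise, define
\[
A'=\frac{A-\lambda P_\sigma}{1-\lambda}.
\]
This matrix has nonnegative entries and row and column sums equal to $1$, so $A' \in \DS_n$. Moreover, $A'$ has strictly fewer nonzero entries than $A$. Since $A=\lambda P_\sigma+(1-\lambda)A'$, induction on the number of nonzero entries expresses $A$ as a convex combination of permutation matrices. The base case is a doubly stochastic matrix with exactly $n$ nonzero entries, which must be a permutation matrix.

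\emph{Extreme points are permutation matrices.} If $A\in\ex(\DS_n)$ is written as a nontrivial convex combination $\sum \lambda_k P_k$, then extremality forces $A$ to equal one of the $P_k$. To see this, split $A = \lambda_1 P_1 + (1-\lambda_1)C$, where $C$ is the renormalized remainder and lies in $\DS_n$; extremality gives $A = P_1$. Together with the first inclusion, this yields $\ex(\DS_n)=\Perm_n$.

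The only genuinely nontrivial point is verifying the zero-block hypothesis $r+s\le n$ of Corollary \ref{C:UIN-konig}. This comes down to the mass-counting argument above.
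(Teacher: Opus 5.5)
Your proof is correct and takes essentially the same route as the paper. It shows permutation matrices are extreme by an entrywise argument, uses the mass-counting bound $r+s\leq n$ to feed Corollary~\ref{C:UIN-konig}, peels off $\lambda P_\sigma$ to strictly reduce the number of nonzero entries, and splits off $P_1$ to see that every extreme point is a permutation matrix; the only difference is that your opening appeal to Krein--Milman is redundant, since your proof of $\ex(\DS_n)\subseteq\Perm_n$ already relies on the explicit decomposition, which is itself the second assertion of the theorem.
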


Before giving the proof of Birkhoff's theorem, we consider an example.

\begin{example}
Let
\begin{equation*}
A =
\begin{bmatrix}
0 & 1/2 & 1/2\\
1/3 & 1/3 & 1/3\\
2/3 & 1/6 & 1/6
\end{bmatrix} \in \DS_3.
\end{equation*}
Pick a transversal of $A$ with nonzero entries (why does such a transversal exist in the general setting?). For $A$, we have several choices. We take the one indicated by blue below
\begin{equation*}
\begin{bmatrix}
\0 & {\color{blue}1/2} & 1/2 \\
{\color{blue}1/3} & 1/3 & 1/3 \\
2/3 & 1/6 & {\color{blue}1/6}
\end{bmatrix},
\end{equation*}
which corresponds to the permutation matrix
\begin{equation*}
P_1=
\begin{bmatrix}
\0 & 1 & \0\\
1 & \0 & \0\\
\0 & \0 & 1
\end{bmatrix}.
\end{equation*}
The minimum entry in this transversal is $1/6$, so we define
\begin{equation*}
B = \frac{A-\frac{1}{6}P_1}{1-\frac{1}{6}} =
\begin{bmatrix}
\0 & 2/5 & 3/5\\
1/5 & 2/5 & 2/5\\
4/5 & 1/5 & \0
\end{bmatrix}.
\end{equation*}
We see that $B$ is a doubly stochastic matrix, and in comparison to $A$, it has one more zero entry. We rewrite the previous identity as
\begin{equation}\label{E:UIN-convex-sumA-1}
A = \tfrac{1}{6}P_1 + \tfrac{5}{6}B.
\end{equation}
We continue this procedure with $B$, and at each step increase the number of zero entries. For example, in $B$, choose the transversal
\begin{equation*}
\begin{bmatrix}
\0 & {\color{blue}2/5} & 3/5\\
1/5 & 2/5 & {\color{blue}2/5}\\
{\color{blue}4/5} & 1/5 & \0
\end{bmatrix}
\end{equation*}
corresponding to the permutation
\begin{equation*}
P_2 =
\begin{bmatrix}
\0 & 1 & \0\\
\0 & \0 & 1\\
1 & \0 & \0
\end{bmatrix},
\end{equation*}
and define
\begin{equation*}
C = \frac{B-\frac{2}{5}P_2}{1-\frac{2}{5}} =
\begin{bmatrix}
\0 & \0 & 1\\
1/3 & 2/3 & \0\\
2/3 & 1/3 & \0
\end{bmatrix}.
\end{equation*}
Then $B= \frac{2}{5}P_2+\frac{3}{5}C$ and hence \eqref{E:UIN-convex-sumA-1} gives
\begin{equation}\label{E:UIN-convex-sumA-2}
A = \tfrac{1}{6}P_1 + \tfrac{1}{3}P_2+\tfrac{1}{2}C.
\end{equation}
In this final step, consider the transversal 
\begin{equation*}
\begin{bmatrix}
\0 & \0 & {\color{blue}1}\\
1/3 & {\color{blue}2/3} & \0\\
{\color{blue}2/3} & 1/3 & \0
\end{bmatrix},
\end{equation*}
which corresponds to
\begin{equation*}
P_3 =
\begin{bmatrix}
\0 & \0 & 1\\
\0 & 1 & \0\\
1 & 0 & \0
\end{bmatrix},
\end{equation*}
and define
\begin{equation*}
D = \frac{C-\frac{2}{3}P_3}{1-\frac{2}{3}}
=
\begin{bmatrix}
\0 & \0 & 1\\
1 & \0 & \0\\
\0 & 1 & \0
\end{bmatrix} = P_4.
\end{equation*}
Thus, $C=\frac{2}{3}P_3+\frac{1}{3}P_4$, from which \eqref{E:UIN-convex-sumA-2} yields
\begin{equation*}
A = \tfrac{1}{6}P_1 + \tfrac{1}{3}P_2+\tfrac{1}{3}P_3+\tfrac{1}{6}P_4.
\end{equation*}
\end{example}

We are now ready for the proof of Birkhoff's theorem. The crucial step is the use of Corollary \ref{C:UIN-konig} to obtain a transversal with all entries nonzero.    

\begin{proof}[Proof of Theorem \ref{T:UIN-birkhoff}]
We first show that each permutation matrix $P=[p_{ij}] \in \Perm_n$ is an extreme point of $\DS_n$. 
Suppose that $P=tA+(1-t)B$, in which $A=[a_{ij}]$ and $B=[b_{ij}] \in \DS_n$ and $0<t<1$. Since the entries of $P$, $A$, and $B$ are in $[0,1]$, and since $0<t<1$, it follows that $p_{ij}=0$ implies that $a_{ij}=b_{ij}=0$. Similarly, wherever $p_{ij}=1$, we must have $a_{ij}=b_{ij}=1$. Thus, $P=A=B$ and hence $P$ is an extreme point of $\DS_n$; that is,
\begin{equation*}
\Perm_n \subset \ex(\DS_n).
\end{equation*}

Suppose that $A \in \DS_n$. If all entries of $A$ are positive, then every transversal of $A$ consists entirely of nonzero entries. Assume that $A$ has at least one zero entry. By multiplying $A$ on both sides by appropriate permutation matrices, we can move any $r \times s$ zero submatrix of $A$ to the upper-left corner. The new matrix, still doubly stochastic, has the form
\begin{equation*}
A' = 
\begin{bmatrix}
0 & X\\
Y & Z
\end{bmatrix},
\end{equation*}
in which the zero matrix above is $r \times s$. The sum of every entry of $A'$ is $n$. Since each row of $X$ adds up to $1$, the sum of all elements of $X$ is $r$. Similarly, since each column of $Y$ also adds up to 1, the sum of all elements of $Y$ is $s$. Therefore,
\begin{equation*}
r+s \leq n.
\end{equation*}
Corollary \ref{C:UIN-konig} ensures that $A'$, and hence $A$, has a transversal with all entries nonzero. 

Fix any transversal of $A$, with the corresponding permutation $P$, so that all its entries are positive. Let $t$ be the minimum entry in the transversal. If $t=1$, then $A=P$ and we are done. If $0<t<1$, let
\begin{equation*}
B = \frac{A-tP}{1-t}.
\end{equation*}
Since $t$ is the smallest element of $A$ along the chosen transversal, the entries of $B$ are nonnegative. Moreover, the division by $1-t$ ensures that every row and column of $B$ sums to $1$; that is, $B$ is doubly stochastic. At entries $(i,j)$ where the transversal takes the minimum value $t$, we have $b_{ij}=0$. Therefore, we have the convex combination
\begin{equation*}
A = tP+(1-t)B,
\end{equation*}
in which $P \in \Perm_n$ and $B$ has at least one more zero entry than $A$. Repeating this procedure a finite number of times, we end up with a representation of $A$ as a convex combination of permutations.

The reasoning above shows that the permutations are the only extreme points of $\DS_n$. Indeed, suppose that $A \in \DS_n$ is not a permutation. By the previous paragraph, there are distinct permutation matrices $P_1,P_2,\ldots,P_N$, in which $N \geq 2$, and $0<t_j<1$ with $\sum_{j=1}^{N}t_j=1$ such that
\begin{equation*}
A = \sum_{j=1}^{N} t_j P_j.
\end{equation*}
If $N=2$, we are done since $A$ is in the interior of the line segment determined by $P_1$ and $P_2$. 
If $N \geq 3$, then we write
\begin{equation*}
A = t_1P_1 + (1-t_1) \bigg(\sum_{j=2}^{N} \frac{t_j}{t} P_j\bigg),
\end{equation*}
in which $t=1-t_1=\sum_{j=2}^{N}t_j$. This is a convex combination of $P_1$ and the doubly stochastic matrix
\begin{equation*}
B=\sum_{j=2}^{N} \frac{t_j}{t} P_j \in \DS_n.
\end{equation*}
Moreover, $B \neq P_1$ since otherwise $A = P_1$, which contradicts the assumption that $A$ is not a permutation matrix.  Since $0 < t_1 < 1$, it follows that $A$ lies in the interior of the nondegenerate line segment joining $P_1$ and $B$, so $A$ is not extreme. 
In other words, $\ex(\DS_n) \subset \Perm_n$.     
\end{proof}

The following result is a direct consequence of Theorems \ref{T:UIN-Hardy-Littlewood-Polya} and \ref{T:UIN-birkhoff}.

\begin{theorem}[Rad\'o \cite{MR45168}] \label{T:UIN-Rado}
Let $\vec{x},\vec{y} \in \R^n$. Then $\vec{x} \prec \vec{y}$ if and only if $\vec{x}$ is a convex combination of permutations of $\vec{y}$.
\end{theorem}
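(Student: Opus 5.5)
The plan is to combine Theorem \ref{T:UIN-Hardy-Littlewood-Polya} with Theorem \ref{T:UIN-birkhoff}. The one fact linking them is that a permutation matrix applied to $\vec{y}$ is a permutation of $\vec{y}$: by the definition of $P_\sigma$, the $i$th entry of $P_\sigma\vec{y}$ is $y_{\sigma(i)}$, so $P_\sigma \vec{y} = \vec{y}_\sigma$. Conversely, every permuted vector $\vec{y}_\sigma$ has the form $P_\sigma\vec{y}$.

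For $(\Rightarrow)$, suppose $\vec{x} \prec \vec{y}$.
\begin{enumerate}
\item Theorem \ref{T:UIN-Hardy-Littlewood-Polya} gives an $A \in \DS_n$ with $\vec{x} = A\vec{y}$.
\item Theorem \ref{T:UIN-birkhoff} writes $A = \sum_{j=1}^{N} t_j P_{\sigma_j}$ with $t_j \geq 0$ and $\sum_j t_j = 1$.
\item By linearity of matrix multiplication,
\begin{equation*}
\vec{x} = \sum_{j=1}^{N} t_j P_{\sigma_j}\vec{y} = \sum_{j=1}^{N} t_j \vec{y}_{\sigma_j},
\end{equation*}
which is a convex combination of permutations of $\vec{y}$.
\end{enumerate}

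For $(\Leftarrow)$, suppose $\vec{x} = \sum_j t_j \vec{y}_{\sigma_j}$ with convex weights $t_j$.
\begin{enumerate}
\item Rewrite this as $\vec{x} = A\vec{y}$ with $A = \sum_j t_j P_{\sigma_j}$.
\item $A$ is doubly stochastic. Either note that $\DS_n$ is convex and contains $\Perm_n$, or check directly: $A$ has nonnegative entries, and $A\vec{1} = \sum_j t_j \vec{1} = \vec{1}$, and likewise $A^{\T}\vec{1} = \vec{1}$.
\item The reverse direction of Theorem \ref{T:UIN-Hardy-Littlewood-Polya} then gives $\vec{x} \prec \vec{y}$.
\end{enumerate}
The easy half of Birkhoff's theorem is not even needed here.

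There is no real obstacle. The only point to get right is the indexing convention, namely that $P_\sigma\vec{y}$ equals $\vec{y}_\sigma$ and not $\vec{y}_{\sigma^{-1}}$. This is harmless anyway, since both range over all permutations of $\vec{y}$ as $\sigma$ runs through $\mathfrak{S}_n$.
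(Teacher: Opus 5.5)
Your proposal is correct and follows the paper's proof: Hardy--Littlewood--P\'{o}lya to get $\vec{x} = A\vec{y}$ with $A \in \DS_n$, then Birkhoff to write $A$ as a convex combination of permutation matrices, noting that $P_\sigma \vec{y} = \vec{y}_\sigma$. Where the paper simply invokes Birkhoff's theorem as an ``if and only if'' for the reverse direction, you check directly that a convex combination of permutation matrices is doubly stochastic, which is the same fact stated explicitly.
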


\begin{proof}
By the Hardy--Littlewood--P\'{o}lya theorem (Theorem \ref{T:UIN-Hardy-Littlewood-Polya}), it follows that $\vec{x} \prec \vec{y}$ if and only if there is an $A \in \DS_n$ such that $\vec{x} = A\vec{y}$. By Birkhoff's theorem  (Theorem \ref{T:UIN-birkhoff}), $A \in \DS_n$ if and only if there are $P_j \in \Perm_n$ and $t_1,t_2,\ldots,t_N \geq 0$
such that $t_1+t_2+\cdots+t_N= 1$ and
\begin{equation*}
A = \sum_{j=1}^{N} t_j P_j.
\end{equation*}
Hence, $\vec{x} \prec \vec{y}$ if and only if
\begin{equation*}
\vec{x} = A\vec{y} = \sum_{j=1}^{N} t_j P_j\vec{y}.
\end{equation*}
Note that each $P_j \vec{y}$ is a vector obtained by permuting the entries of $\vec{y}$.
\end{proof}

\section{Weak majorization} \label{S:UIN-weak-major}

The Hardy--Littlewood--P\'{o}lya theorem (Theorem \ref{T:UIN-Hardy-Littlewood-Polya}) characterizes the majorization $\vec{x} \prec \vec{y}$ via the existence of a doubly stochastic matrix $A \in \M_n$ such that $\vec{x} = A\vec{y}$. Then Birkhoff's theorem (Theorem \ref{T:UIN-birkhoff}) shows that the extreme points of this family are precisely the permutation matrices. Therefore, it is natural to think of a weak version of doubly stochastic matrices to characterize weak majorization, and then a weak version of permutations to obtain the extreme points of this class.

A nonnegative matrix $A \in \M_n$ is \emph{doubly substochastic} if the sum of the entries in each row and each column is at most $1$. The set of all doubly substochastic matrices in $\M_n$ is denoted by $\DSS_n$. One can enlarge a doubly substochastic matrix to a doubly stochastic matrix and exploit the properties of the doubly stochastic matrices to derive results for doubly substochastic matrices. Let $A \in \DSS_n$ and define $R=\diag (r_1,r_2,\ldots,r_n)$ and $C=\diag (c_1,c_2,\ldots,c_n)$, in which $r_i$ is the sum of the entries in the $i$th row of $A$ and $c_j$ is the sum of the entries in the $j$th column of $A$. Then 
\begin{equation}\label{E:A-to-boubly}
\begin{bmatrix}
A & I_n-R \\
I_n-C & A^{\T}
\end{bmatrix} \in \DS_{2n}.
\end{equation}

We say that $A \in \M_n$ is a \emph{sub-permutation}, or a \emph{weak permutation}, if in every row and in every column of $A$ there is at most one nonzero entry, which must be $1$, and all other entries are $0$. Thus, a sub-permutation matrix is obtained by picking a permutation matrix and then changing some of its entries from $1$ to $0$. The family of all sub-permutations in $\M_n$ is denoted by $\PermSub_n$. In light of \eqref{E:A-to-boubly} and Birkhoff's theorem (Theorem \ref{T:UIN-birkhoff}),
\begin{equation}\label{E:UIN-sub-birkhoff}
\ex(\DSS_n) = \PermSub_n
\end{equation}
since each sub-permutation matrix is extreme.
Indeed, if $P = tA + (1-t)B$, in which $A,B \in \DSS_n$ and $0<t<1$, then $p_{ij}=0$ ensures that $a_{ij}=b_{ij}=0$; if $p_{ij} = 1$, the row- and column-sum bounds ensure that $a_{ij},b_{ij}\leq 1$,
and hence $a_{ij}=b_{ij}=1$ since a convex combination of them equals $1$.

This leads to a characterization of doubly substochastic matrices. Since we can change a sub-permutation matrix to a permutation matrix by changing some of its $0$ entries to $1$s, the representation of a doubly substochastic matrix as a convex combination of sub-permutations, implicitly given in \eqref{E:UIN-sub-birkhoff}, shows that
\begin{equation}\label{E:UIN-char-sub-double}
A \in \DSS_n
\iff
\text{there exists a $B \in \DS_n$ such that $0 \leq_{\e} A \leq_{\e} B$}.
\end{equation}

We are now ready to provide a characterization of weak majorization with the same flavor as the Hardy--Littlewood--P\'{o}lya theorem (Theorem \ref{T:UIN-Hardy-Littlewood-Polya}). We present it in two steps since the results are slightly different depending on whether the entries of certain vectors are all nonnegative or not.

\begin{theorem}\label{T:UIN-weak-major-char-1}
Let $\vec{x},\vec{y} \in \R^n_{+}$. Then $\vec{x} \prec_{\w} \vec{y}$ if and only if there is $A \in \DSS_n$ such that $\vec{x} = A\vec{y}$.
\end{theorem}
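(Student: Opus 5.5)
The plan is to prove the two directions separately. The direction ($\Leftarrow$) mimics the partial-sum computation in the Hardy--Littlewood--P\'olya theorem. The direction ($\Rightarrow$) reduces to that theorem by first lowering entries of $\vec{x}$ until it becomes majorized. Nonnegativity of $\vec{y}$ (and of $\vec{x}$) is used in both directions.

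\medskip\noindent$(\Leftarrow)$ Suppose $\vec{x}=A\vec{y}$ with $A\in\DSS_n$. By \eqref{E:UIN-char-sub-double} there is $B\in\DS_n$ with $0\leq_{\e} A\leq_{\e} B$. Since $\vec{y}\geq_{\e}0$, the vector $\vec{z}:=B\vec{y}$ satisfies $\vec{x}\leq_{\e}\vec{z}$. Theorem \ref{T:UIN-Hardy-Littlewood-Polya} gives $\vec{z}\prec\vec{y}$. Entrywise domination gives $x_{[k]}\leq z_{[k]}$ for each $k$: the $k$th largest of $\vec{x}$ is at most the $k$th largest of $\vec{z}$, since $k$ entries of $\vec{x}$ that are $\geq x_{[k]}$ correspond to $k$ entries of $\vec{z}$ that are $\geq x_{[k]}$. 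Summing, $\sum_{i\le k}x_{[i]}\le\sum_{i\le k}z_{[i]}\le\sum_{i\le k}y_{[i]}$, so $\vec{x}\prec_{\w}\vec{y}$.

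\medskip\noindent$(\Rightarrow)$ Suppose $\vec{x}\prec_{\w}\vec{y}$ with $\vec{x},\vec{y}\in\R^n_+$. As in Theorem \ref{T:UIN-Hardy-Littlewood-Polya}, we may assume both vectors are decreasing: permutation matrices conjugate $\DSS_n$ to itself. Let $\delta=\sum y_i-\sum x_i\geq 0$. The idea is to \emph{shrink} $\vec{y}$ rather than enlarge $\vec{x}$, so that nonnegativity is preserved. Reduce the smallest entries of $\vec{y}$ to get $\vec{y}'$ with $0\leq_{\e}\vec{y}'\leq_{\e}\vec{y}$ and $\vec{x}\prec\vec{y}'$.
\begin{itemize}
\item Concretely, take $\vec{y}'=(y_1,\dots,y_{k-1},\,y_k-\theta,\,0,\dots,0)$, with $k$ and $\theta$ chosen so that the total sum drops by exactly $\delta$. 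This is possible since $\delta\le\sum y_i$ because $\vec{x}\geq_{\e}0$.
\item The main obstacle is checking the majorization $\vec{x}\prec\vec{y}'$. For $m<k$, the partial sums of $\vec{y}'$ agree with those of $\vec{y}$, and $\vec{y}'$ is still decreasing.
\item For $m\geq k$, the partial sums of $\vec{y}'$ equal $\sum x_i$, which is $\geq\sum_{i\le m}x_i$ because $\vec{x}\geq_{\e}0$.
\end{itemize}
Theorem \ref{T:UIN-Hardy-Littlewood-Polya} then gives $B\in\DS_n$ with $\vec{x}=B\vec{y}'$. Finally, write $\vec{y}'=D\vec{y}$ with $D$ diagonal and $0\le d_{ii}\le1$; set $d_{ii}=y_i'/y_i$, or $0$ if $y_i=0$. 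Then $A=BD$ is nonnegative, its row sums are at most $1$ and its column sums are $d_{jj}\le1$. Hence $A\in\DSS_n$ and $\vec{x}=A\vec{y}$.
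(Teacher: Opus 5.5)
Your proof is correct. The $(\Leftarrow)$ direction is the paper's argument: dominate $A$ by some $B\in\DS_n$ via \eqref{E:UIN-char-sub-double}, use $\vec{y}\geq_{\e}\vec{0}$, then apply Theorem \ref{T:UIN-Hardy-Littlewood-Polya}. You also make explicit the step $x_{[k]}\leq z_{[k]}$, which the paper leaves tacit.

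For $(\Rightarrow)$ you take a genuinely different route. The paper first disposes of the cases $\vec{x}=\vec{0}$ and $\vec{x}\prec\vec{y}$, then \emph{enlarges the dimension}. It appends $m$ copies of $a=s/m$ to $\vec{x}$ and $m$ zeros to $\vec{y}$, with $a\leq x_{\min}$ and $a\leq y_{\min}$, and checks $\widehat{\vec{x}}\prec\widehat{\vec{y}}$ in $\R^{n+m}$. It then reads off $A$ as the upper-left $n\times n$ block of the doubly stochastic matrix supplied by Hardy--Littlewood--P\'olya.

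You stay in $\R^n$ instead. You cut $\vec{y}$ down from the bottom to a decreasing $\vec{y}'$ with $\vec{0}\leq_{\e}\vec{y}'\leq_{\e}\vec{y}$ and $\sum_i y_i'=\sum_i x_i$. You obtain $\vec{x}=B\vec{y}'$ with $B\in\DS_n$ and absorb the truncation into a diagonal contraction, $A=BD$.

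What your version buys:
\begin{itemize}
\item It needs no case split and no auxiliary parameters $m$, $x_{\min}$, $y_{\min}$.
\item Its partial-sum check is transparent. In the paper, the comparison for indices past the positive entries of $\vec{x}$ depends on $a\leq y_{\min}$, not only on the total-sum bound it cites.
\item It yields the factorization $A=BD\leq_{\e}B$ with $B\in\DS_n$ directly. This is exactly the domination used later in Theorem \ref{T:UIN-weak-major-char-2}, obtained here without appealing to \eqref{E:UIN-char-sub-double}.
\end{itemize}
The paper's padding construction, for its part, fits the section's theme of realizing substochastic matrices as corners of larger doubly stochastic ones, as in \eqref{E:A-to-boubly}.
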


\begin{proof}
$(\Rightarrow)$
Suppose that $\vec{x} \prec_{\w} \vec{y}$. First, we consider two extreme cases. 
If $\vec{x}=\vec{0}$, then let $A=0$. If $\vec{x}\prec \vec{y}$, then the Hardy--Littlewood--P\'{o}lya theorem (Theorem \ref{T:UIN-Hardy-Littlewood-Polya}) provides  an $A \in \DS_n$ such that $\vec{x}=A\vec{y}$. Suppose that neither of these cases holds and let
\begin{equation*}
s = \sum_{k=1}^{n} y_k - \sum_{k=1}^{n} x_k.
\end{equation*}
Since $\vec{x} \nprec \vec{y}$, we have $s > 0$. Let
\begin{equation*}
x_{\min} = \min\{x_k: x_k>0\}
\qquad\text{and}\qquad
y_{\min} = \min\{y_k: y_k>0\};
\end{equation*}
these quantities are positive since $\vec{x} \neq \vec{0}$ and $\vec{x} \prec_{\w} \vec{y}$. Choose $m$ large enough so that
\begin{equation*}
\frac{s}{m} \leq x_{\min}
\qquad\text{and}\qquad
\frac{s}{m} \leq y_{\min},
\end{equation*}
and then define $a = s/m$,
\begin{equation*}
\widehat{\vec{x}} =
\begin{bmatrix}
x_1\\
\vdots\\
x_n\\
a\\
\vdots\\
a
\end{bmatrix},
\qquad\text{and}\qquad
\widehat{\vec{y}} =
\begin{bmatrix}
y_1\\
\vdots\\
y_n\\0\\
\vdots\\
0
\end{bmatrix} \in \R^{n+m}.
\end{equation*}
Since $a\leq x_{\min}$, the $a$'s occur after the positive entries of $\vec{x}$ in $\widehat{\vec{x}}$. Up to that point, the required inequalities are exactly $\vec{x}\prec_{\w} \vec{y}$. Every subsequent partial sum of $\widehat{\vec{x}}$ is at most the total sum, which is $\sum_{k=1}^n y_k =\sum_{k=1}^n \widehat{y}_k$, from
which it follows that $\widehat{\vec{x}} \prec \widehat{\vec{y}}$. 
The Hardy--Littlewood--P\'{o}lya theorem (Theorem \ref{T:UIN-Hardy-Littlewood-Polya}) provides an $\widehat{A} \in \DS_{n+m}$ such that $\widehat{\vec{x}} = \widehat{A} \widehat{\vec{y}}$. Decompose $\widehat{A}$ as
\begin{equation*}
\widehat{A} =
\begin{bmatrix}
A & *\\
* & *
\end{bmatrix},
\end{equation*}
in which $A$ is the upper-left $n \times n$ submatrix, which is in $\DSS_n$. The identity $\widehat{\vec{x}} = \widehat{A} \widehat{\vec{y}}$ yields $\vec{x}=A\vec{y}$.

\medskip\noindent$(\Leftarrow)$
Suppose that  $\vec{x} = A\vec{y}$, in which $A \in \DSS_n$. Then \eqref{E:UIN-char-sub-double} provides
a $B \in \DS_n$ such that $A \leq_{\e} B$. Hence, $\vec{x} = A\vec{y} \leq_{\e} B\vec{y}$ (positivity is used here) and, by the Hardy--Littlewood--P\'{o}lya theorem, $B\vec{y} \prec \vec{y}$. Therefore, $\vec{x} \prec_{\w} \vec{y}$.
\end{proof}

For $\vec{x},\vec{y} \in \R^n_{+}$, the proof of Theorem \ref{T:UIN-weak-major-char-1} shows that 
$\vec{x} \prec_{\w} \vec{y}$ if and only if there is a $\vec{z}=B\vec{y} \in \R^n_{+}$ such that $\vec{x} \leq_{\e} \vec{z}$ and $\vec{z} \prec \vec{y}$. If the entries of $x$ and $y$ are not necessarily nonnegative, we need to relax the conditions to handle the new setting.

\begin{theorem}\label{T:UIN-weak-major-char-2}
Let $\vec{x},\vec{y} \in \R^n$. Then $\vec{x} \prec_{\w} \vec{y}$ if and only if there is a $\vec{z} \in \R^n$ such that $\vec{x} \leq_{\e} \vec{z}$ and $\vec{z} \prec \vec{y}$.
\end{theorem}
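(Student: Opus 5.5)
The easy direction ($\Leftarrow$) is handled first. Suppose $\vec{x} \leq_{\e} \vec{z}$ and $\vec{z} \prec \vec{y}$. Entrywise domination passes to decreasing rearrangements: $x_{[i]} \leq z_{[i]}$ for every $i$. Indeed, $x_{[i]}$ is the largest value $c$ such that at least $i$ entries of $\vec{x}$ are $\geq c$, and each such entry is dominated by the corresponding entry of $\vec{z}$. Summing gives
\begin{equation*}
\sum_{i=1}^k x_{[i]} \leq \sum_{i=1}^k z_{[i]} \leq \sum_{i=1}^k y_{[i]}
\end{equation*}
for each $1 \leq k \leq n$. Hence $\vec{x} \prec_{\w} \vec{y}$.

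For ($\Rightarrow$), we first reduce to the case in which $\vec{x} = \vec{x}\!\!\downarrow$ and $\vec{y} = \vec{y}\!\!\downarrow$. If $\vec{z}$ works for the sorted vectors, then $P_\sigma^{\T}\vec{z}$ works for the originals, where $\vec{x}\!\!\downarrow\,\, = P_\sigma \vec{x}$. This is because $\leq_{\e}$ is preserved under a common permutation, and majorization is permutation invariant.

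The idea is then to raise only the last entry of $\vec{x}$. Let
\begin{equation*}
s = \sum_{i=1}^n y_i - \sum_{i=1}^n x_i \geq 0
\end{equation*}
and set $\vec{z} = (x_1, \ldots, x_{n-1}, x_n + s)$. Then $\vec{x} \leq_{\e} \vec{z}$ and the total sums of $\vec{z}$ and $\vec{y}$ agree. What remains is to check the partial sums of $\vec{z}\!\!\downarrow$. Since $x_n + s$ may now be the largest entry, the order can change, and this is the only real obstacle.

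To handle it, note that any sum of the $k$ largest entries of $\vec{z}$ ($k<n$) either omits $z_n$, or includes $z_n$ together with $k-1$ of the $x_i$. In the first case it equals $\sum_{i\le k} x_i \leq \sum_{i\le k} y_i$. In the second case it equals the total sum of $\vec{z}$ minus the sum of the $n-k$ omitted entries $x_j$ with $j<n$. That total is $\sum_i y_i$, and the omitted entries sum to at least $\sum_{j=k+1}^{n} x_j$, since $x_n$ is the smallest entry of $\vec{x}$. Hence the sum is at most
\begin{equation*}
\sum_{i=1}^n y_i - \sum_{j=k+1}^n x_j.
\end{equation*}
I need this to be $\leq \sum_{i=1}^k y_i$, that is, $\sum_{j>k} x_j \geq \sum_{j>k} y_j$. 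This is \emph{not} implied by weak majorization, so the naive choice fails.

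The fix is to spread the deficit greedily instead: increase the entries of $\vec{x}$ from the bottom up. Concretely, define $\vec{z}$ by $z_i = \max(x_i, c)$-type adjustments, or more cleanly set
\begin{equation*}
z_k = x_k + \delta_k, \qquad \delta_k = \max_{j \geq k}\Bigl(\sum_{i\ge j} y_i - \sum_{i \ge j} x_i\Bigr)^{+} - \max_{j\ge k+1}\Bigl(\sum_{i\ge j} y_i - \sum_{i \ge j} x_i\Bigr)^{+},
\end{equation*}
so that the tail sums of $\vec{z}$ dominate those of $\vec{y}$ and the total sums are equal. An alternative that is easier to verify is induction on $n$. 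Let $k$ be maximal with $\sum_{i\le k}x_i = \sum_{i\le k}y_i$, or $k=0$ if there is none. Then all later partial-sum inequalities are strict. Increase $x_n$ by the largest amount $\epsilon$ keeping $x_n+\epsilon \leq x_{n-1}$ and keeping all the inequalities $\prec_{\w}$ true. Either equality of totals is reached, in which case we are done, or a new tie occurs, either in the ordering or in a partial sum. Each step strictly increases the number of ties, so the process terminates after finitely many steps with $\vec{z} \prec \vec{y}$ and $\vec{x} \leq_{\e} \vec{z}$. Keeping $\vec{z}$ decreasing avoids the reordering problem entirely, so in each step only partial sums need checking. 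Verifying that this bookkeeping terminates correctly is the main work.
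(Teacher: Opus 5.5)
You prove ($\Leftarrow$) correctly, your reduction to sorted $\vec{x}$ and $\vec{y}$ is fine, and you are right that raising only $x_n$ fails. But the ($\Rightarrow$) direction, which is the substance of the theorem, is left unproved: you give two sketches and verify neither. For the $\delta_k$ construction you only assert two things: the tail sums of $\vec{z}$ dominate those of $\vec{y}$, and the totals agree. That gives $\sum_{i \leq k} z_i \leq \sum_{i \leq k} y_i$ in the \emph{given} order. This implies $\vec{z} \prec \vec{y}$ only if $\vec{z}$ is nonincreasing, which is exactly the reordering obstacle you identified a paragraph earlier, and your sketch never addresses it. It is true, and checking it is the missing step. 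Put $D_j = \sum_{i \geq j}(y_i - x_i)$, $D_{n+1} = 0$, and $M_j = \max_{j \leq l \leq n+1} D_l$, so that your definition reads $z_j = x_j + M_j - M_{j+1}$. If $D_{j+1} \leq M_{j+2}$, then $z_{j+1} = x_{j+1} \leq x_j \leq z_j$. Otherwise $M_{j+1} = D_{j+1}$, so $z_j = x_j + (y_j - x_j)^{+} = \max(x_j,y_j)$. Meanwhile $M_{j+2} \geq D_{j+2}$ gives $z_{j+1} \leq x_{j+1} + D_{j+1} - D_{j+2} = y_{j+1} \leq y_j \leq z_j$. You also need $M_1 = D_1 = s$ for the totals to agree. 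This is where $\vec{x} \prec_{\w} \vec{y}$ is used, since $D_l = s - \sum_{i<l}(y_i - x_i) \leq s$.

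Your iterative alternative (which is not actually an induction on $n$) stalls as literally stated. While $\vec{x}$ stays sorted, raising $x_n$ changes only the total, and once $x_n = x_{n-1}$ the constraint $x_n + \epsilon \leq x_{n-1}$ forces $\epsilon = 0$. You must raise the whole terminal block of equal entries together. The crux you defer as ``bookkeeping'' is then that the process cannot halt at a partial-sum tie while the total is still short. That holds. A tie at an index $m$ inside the block, together with the $(m-1)$st inequality, gives $x_m \geq y_m$. Hence $x_i = x_m \geq y_m \geq y_i$ for all $i > m$, which forces every later partial sum, including the total, to tie. With either repair you would have a direct, elementary construction of $\vec{z}$. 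The paper avoids constructing $\vec{z}$ by hand. It translates by $t\vec{1}$ to make both vectors positive and applies Theorem~\ref{T:UIN-weak-major-char-1} to get $A \in \DSS_n$ with $\widetilde{\vec{x}} = A\widetilde{\vec{y}}$. It then dominates $A \leq_{\e} B \in \DS_n$ and takes $\vec{z} = B\vec{y}$. This gives $\vec{z} \prec \vec{y}$ by Hardy--Littlewood--P\'{o}lya, and $\vec{x} \leq_{\e} \vec{z}$ because $B\vec{1} = \vec{1}$.
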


\begin{proof}
$(\Rightarrow)$
Suppose that $\vec{x} \prec_{\w} \vec{y}$. Choose $t>0$ so large that all entries of
\begin{equation*}
\widetilde{\vec{x}} =
\begin{bmatrix}
x_1+t\\
x_2+t\\
\vdots\\
x_n+t
\end{bmatrix}
\qquad\text{and}\qquad
\widetilde{\vec{y}} =
\begin{bmatrix}
y_1+t\\
y_2+t\\
\vdots\\
y_n+t
\end{bmatrix}
\end{equation*}
are positive. Observe that $\widetilde{\vec{x}} \prec_{\w} \widetilde{\vec{y}}$ since adding $t$ to every coordinate adds $kt$ to the $k$th partial sum. 
Theorem \ref{T:UIN-weak-major-char-1} provides an $A \in \DSS_n$ such that $\widetilde{\vec{x}} = A \widetilde{\vec{y}}$. Then \eqref{E:UIN-char-sub-double} yields a $B \in \DS_n$ such that $A \leq_{\e} B$. Thus, $\widetilde{\vec{x}} = A \widetilde{\vec{y}}$ implies $\widetilde{\vec{x}} \leq_{\e} B \widetilde{\vec{y}}$, so $\vec{x} \leq_{\e} B \vec{y}$ because $B \vec{1} = \vec{1}$.
The result follows by taking $\vec{z}=B \vec{y}$.  Since $B \in \DS_n$, the Hardy--Littlewood--P\'olya theorem ensures that $B \vec{y} \prec \vec{y}$.

\medskip\noindent$(\Leftarrow$) If $\vec{x} \leq_{\e} \vec{z}$ and $\vec{z} \prec \vec{y}$, then $\vec{x} \prec_{\w} \vec{y}$. 
\end{proof}

For the application discussed in \S\ref{S:UIN-further-applications}, we need to know how majorization and weak majorization behave when a function is applied to the vectors involved. Given $\vec{x}=[x_i] \in \R^n$, assume that $f$ is a function defined on an open interval large enough to contain each entry of $\vec{x}$ and define 
\begin{equation*}
f(\vec{x}) := (f(x_1),f(x_2),\ldots,f(x_n)).
\end{equation*}

\begin{corollary} \label{C:UIN-f-convex-majorization-1}
Let $\vec{x},\vec{y} \in \R^n$, and let $I$ be an open interval in $\R$ containing the entries of $\vec{x}$ and $\vec{y}$. If $f$ is convex on $I$, then
\begin{equation*}
\vec{x} \prec \vec{y} 
\quad\implies\quad
f(\vec{x}) \prec_{\w} f(\vec{y}).
\end{equation*}
\end{corollary}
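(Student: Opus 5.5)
Since $\vec{x} \prec \vec{y}$, Rad\'o's theorem (Theorem \ref{T:UIN-Rado}) writes $\vec{x}$ as a convex combination of permutations of $\vec{y}$:
\begin{equation*}
\vec{x} = \sum_{j=1}^{N} t_j P_j \vec{y}, \qquad t_j \geq 0, \quad \sum_{j=1}^{N} t_j = 1.
\end{equation*}
The strategy is to apply $f$ entrywise and use convexity coordinate by coordinate. The $i$th entry of $\vec{x}$ is $x_i = \sum_j t_j (P_j\vec{y})_i$, and each $(P_j\vec{y})_i$ is an entry of $\vec{y}$, hence lies in $I$. Convexity of $f$ on $I$ (Jensen's inequality for finite convex combinations) then gives
\begin{equation*}
f(x_i) \leq \sum_{j=1}^{N} t_j f\big((P_j\vec{y})_i\big) = \sum_{j=1}^{N} t_j \big(P_j f(\vec{y})\big)_i .
\end{equation*}
The last equality holds because applying $f$ entrywise commutes with permuting entries.

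Set $\vec{z} := \sum_j t_j P_j f(\vec{y})$. The display above says exactly that $f(\vec{x}) \leq_{\e} \vec{z}$. Moreover, $\vec{z}$ is a convex combination of permutations of $f(\vec{y})$, so Rad\'o's theorem (the reverse direction) yields $\vec{z} \prec f(\vec{y})$. Alternatively, one can note that $\vec{z} = \big(\sum_j t_j P_j\big) f(\vec{y})$, where $\sum_j t_j P_j \in \DS_n$, and invoke Theorem \ref{T:UIN-Hardy-Littlewood-Polya}.

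Finally, Theorem \ref{T:UIN-weak-major-char-2}, direction $(\Leftarrow)$, applied with $f(\vec{x}) \leq_{\e} \vec{z}$ and $\vec{z} \prec f(\vec{y})$, gives $f(\vec{x}) \prec_{\w} f(\vec{y})$. That direction was stated with only a one-line justification, so I would check it briefly. If $\vec{u} \leq_{\e} \vec{z}$, then for each $k$ the sum of the $k$ largest entries of $\vec{u}$ is at most the sum of the corresponding entries of $\vec{z}$. That in turn is at most the sum of the $k$ largest entries of $\vec{z}$, which is at most the sum of the $k$ largest entries of $f(\vec{y})$.

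There is no real obstacle here. The only points needing care are that every point where $f$ is evaluated lies in $I$, and that entrywise $f$ commutes with permutation matrices. The main conceptual step is recognizing that Rad\'o's representation turns the vector inequality into a coordinatewise Jensen inequality.
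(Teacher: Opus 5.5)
Your proof is correct and is essentially the paper's argument. The paper takes $A \in \DS_n$ with $\vec{x} = A\vec{y}$ from Theorem \ref{T:UIN-Hardy-Littlewood-Polya}, applies Jensen row by row to get $f(\vec{x}) \leq_{\e} A f(\vec{y})$, and concludes with $A f(\vec{y}) \prec f(\vec{y})$ and Theorem \ref{T:UIN-weak-major-char-2}, exactly as you do with $\sum_j t_j P_j$ in place of $A$. The detour through Rad\'o's theorem (and hence Birkhoff) is unnecessary, since the rows of the Hardy--Littlewood--P\'olya matrix already supply the convex weights for Jensen.
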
 

\begin{proof}
Suppose that $\vec{x} \prec \vec{y}$. Theorem \ref{T:UIN-Hardy-Littlewood-Polya} provides
an $A=[a_{ij}] \in \DS_n$ such that $\vec{x} = A\vec{y}$. Thus,
\begin{equation*}
x_i = \sum_{j=1}^{n} a_{ij}y_j
\end{equation*}
for all $1 \leq i \leq n$.
Since $f$ is convex and $a_{i1},a_{i2},\ldots,a_{in} \geq 0$ sum to $1$, we have
\begin{equation*}
f(x_i) \leq \sum_{j=1}^{n} a_{ij} f(y_j)
\end{equation*}
for all $1 \leq i \leq n$.
We can rewrite the system of inequalities above as
\begin{equation*}
f(\vec{x}) \leq_{\e} A f(\vec{y}).
\end{equation*}
The Hardy--Littlewood--P\'{o}lya theorem says that $Af(\vec{y}) \prec f(\vec{y})$, and then, by Theorem \ref{T:UIN-weak-major-char-2}, we conclude that $f(\vec{x}) \prec_{\w} f(\vec{y})$.
\end{proof}

In the result above, the assumption $\vec{x} \prec \vec{y}$ cannot be replaced by $\vec{x} \prec_{\w} \vec{y}$. For example, $f(t)=|t|$ is convex and hence
\begin{equation*}
\vec{x} \prec \vec{y}
\quad\implies\quad
|\vec{x}| \prec_{\w} |\vec{y}|.
\end{equation*}
However, $(-1,-1) \prec_{\w} (0,0)$ but $f$ does not respect this relation since $(1,1) \nprec_w (0,0)$. 
An extra assumption is needed to allow such a conclusion.

\begin{corollary} \label{C:UIN-f-convex-majorization-2}
Let $\vec{x},\vec{y} \in \R^n$, and let $I$ be an open interval in $\R$ containing the entries of $\vec{x}$ and $\vec{y}$. If $f$ is increasing and convex on $I$, then
\begin{equation*}
\vec{x} \prec_{\w} \vec{y}
\quad\implies\quad
f(\vec{x}) \prec_{\w} f(\vec{y}).
\end{equation*}
\end{corollary}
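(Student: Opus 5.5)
We reduce to Corollary \ref{C:UIN-f-convex-majorization-1} by way of Theorem \ref{T:UIN-weak-major-char-2}. Suppose that $\vec{x} \prec_{\w} \vec{y}$. Theorem \ref{T:UIN-weak-major-char-2} provides a $\vec{z} \in \R^n$ such that $\vec{x} \leq_{\e} \vec{z}$ and $\vec{z} \prec \vec{y}$. The idea is to push $\vec{x}$ up to $\vec{z}$ using that $f$ is increasing. Then we pass from $\vec{z}$ to $\vec{y}$ using convexity through Corollary \ref{C:UIN-f-convex-majorization-1}. Finally, we combine the two comparisons.

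The first point to check is that $f(\vec{z})$ makes sense, that is, that the entries of $\vec{z}$ lie in $I$. Since $\vec{z} \prec \vec{y}$, Rad\'o's theorem (Theorem \ref{T:UIN-Rado}) shows that each $z_i$ is a convex combination of the entries of $\vec{y}$. Hence each $z_i$ lies between $\min_j y_j$ and $\max_j y_j$, and therefore lies in the interval $I$.

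Next, since $f$ is increasing and $x_i \leq z_i$ for each $i$, we get $f(\vec{x}) \leq_{\e} f(\vec{z})$. Since $\vec{z} \prec \vec{y}$ and $f$ is convex on $I$, Corollary \ref{C:UIN-f-convex-majorization-1} gives $f(\vec{z}) \prec_{\w} f(\vec{y})$.

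It remains to conclude that $f(\vec{x}) \prec_{\w} f(\vec{y})$. This needs two elementary facts. The first is that $\vec{u} \leq_{\e} \vec{v}$ implies $\vec{u} \prec_{\w} \vec{v}$. To see this, note that $\sum_{i=1}^k u_{[i]}$ is the maximum of $\sum_{i \in S} u_i$ over all $k$-element sets $S$. Each such sum is at most $\sum_{i\in S} v_i \leq \sum_{i=1}^k v_{[i]}$. The second fact is that $\prec_{\w}$ is transitive, which is immediate from the definition via partial sums of the decreasing rearrangements. Applying these with $\vec{u} = f(\vec{x})$, $\vec{v} = f(\vec{z})$, and then transitivity, yields $f(\vec{x}) \prec_{\w} f(\vec{z}) \prec_{\w} f(\vec{y})$.

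The only step that needs any care is the first one, namely ensuring that $\vec{z}$ stays in the domain $I$; the rest consists of routine bookkeeping.
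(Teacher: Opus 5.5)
Your proposal is correct and follows the paper's proof essentially step for step: you obtain $\vec{z}$ from Theorem \ref{T:UIN-weak-major-char-2}, use Rad\'o's theorem to keep the entries of $\vec{z}$ in $I$, get $f(\vec{x}) \leq_{\e} f(\vec{z})$ from monotonicity, and apply Corollary \ref{C:UIN-f-convex-majorization-1} to get $f(\vec{z}) \prec_{\w} f(\vec{y})$. The only difference is that you explicitly justify the final combination (entrywise domination implies weak majorization, and $\prec_{\w}$ is transitive), which the paper leaves implicit.
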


\begin{proof}
Suppose that $\vec{x} \prec_{\w} \vec{y}$.  Theorem \ref{T:UIN-weak-major-char-2} provides a $\vec{z} \in \R^n$ such that $\vec{x} \leq_{\e} \vec{z}$ and $\vec{z} \prec \vec{y}$. Thus, the entries of $\vec{z}$ belong to $I$. Indeed, Rad\'o's theorem expresses $\vec{z}$ as a convex combination of permutations of $\vec{y}$; hence, each $z_i \in [\min_i y_i, \max_i y_i]$ and this interval lies in $I$. Since $f$ is increasing, $f(\vec{x}) \leq_{\e} f(\vec{z})$. By Corollary \ref{C:UIN-f-convex-majorization-1}, we also have $f(\vec{z}) \prec_{\w} f(\vec{y})$. Therefore, we conclude that $f(\vec{x}) \prec_{\w} f(\vec{y})$.
\end{proof}

If $\vec{x},\vec{y} \in \R_{+}^n$ and $\log \vec{x} \prec_{\w} \log \vec{y}$ (with the value $-\infty$ allowed), we say that $\vec{x}$ is \emph{weakly log-majorized} by $\vec{y}$, denoted $\vec{x} \prec_{\mathrm{wlog}} \vec{y}$. More explicitly, using the convention \eqref{E:UIN-decreasing-order-x}, $\log \vec{x} \prec_{\w} \log \vec{y}$ means that
\begin{equation*}
\prod_{i=1}^{k} x_{[i]} \leq \prod_{i=1}^{k} y_{[i]}
\end{equation*}
for all $1 \leq k \leq n$.  If equality holds with $k=n$, then
$\vec{x}$ is \emph{log-majorized} by $\vec{y}$, denoted $\vec{x} \prec_{\log} \vec{y}$.

\begin{corollary} \label{C:UIN-f-convex-majorization-3}
Let $\vec{x},\vec{y} \in \R^n_{+}$. Then
\begin{equation*}
\vec{x} \prec_{\mathrm{wlog}} \vec{y}
\quad\implies\quad 
\vec{x} \prec_{\w} \vec{y}.
\end{equation*}
\end{corollary}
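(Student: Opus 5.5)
The plan is to apply Corollary \ref{C:UIN-f-convex-majorization-2} with $f(t)=\e^t$, which is increasing and convex on all of $\R$. If every entry of $\vec{x}$ and $\vec{y}$ is positive, then $\log\vec{x},\log\vec{y}\in\R^n$. The hypothesis $\vec{x}\prec_{\mathrm{wlog}}\vec{y}$ says exactly that $\log\vec{x}\prec_{\w}\log\vec{y}$; here we use that $\log$ is increasing, so $(\log\vec{x})_{[i]}=\log x_{[i]}$. Taking $I=\R$, the corollary gives $\exp(\log\vec{x})\prec_{\w}\exp(\log\vec{y})$, that is, $\vec{x}\prec_{\w}\vec{y}$.

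The main obstacle is zero entries, where $\log$ takes the value $-\infty$ and Corollary \ref{C:UIN-f-convex-majorization-2} does not apply directly. I would handle this with a perturbation and limiting argument. For $\epsilon>0$, set $\vec{y}^{\epsilon}=\vec{y}+\epsilon\vec{1}$; this raises every entry and so preserves all the product inequalities. Also set $x^{\epsilon}_{[i]}=x_{[i]}$ when $x_{[i]}>0$, and otherwise $x^{\epsilon}_{[i]}=\delta(\epsilon)$ for a small $\delta(\epsilon)>0$.

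Next I would check that $\vec{x}^\epsilon\prec_{\mathrm{wlog}}\vec{y}^\epsilon$. Let $r$ be the number of positive entries of $\vec{x}$.
\begin{itemize}
\item For $k\leq r$, the $k$th product is unchanged on the left and strictly increased on the right.
\item For $k>r$, the left product contains a factor $\delta$ and tends to $0$ as $\delta\to0$, while the right product is at least $\epsilon^k>0$.
\end{itemize}
So choosing $\delta$ small enough, and also smaller than $\min\{x_i:x_i>0\}$ so that the ordering is preserved, gives all $n$ inequalities. The positive case then yields $\vec{x}^\epsilon\prec_{\w}\vec{y}^\epsilon$.

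Finally, let $\epsilon\to0$ with $\delta(\epsilon)\to0$. Each partial sum $\sum_{i\leq k}x^\epsilon_{[i]}$ and $\sum_{i\leq k}y^\epsilon_{[i]}$ converges to the corresponding partial sum for $\vec{x}$ and $\vec{y}$. The non-strict inequalities survive the limit, which gives $\vec{x}\prec_{\w}\vec{y}$.

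Alternatively, one can avoid the limit. If $y_{[k]}=0$ for some $k$, then the product hypothesis forces $x_{[j]}=0$ for every $j$ such that $y_{[j]}=0$. One can then truncate both vectors to the first $m$ indices, where $m$ is the number of positive entries of $\vec{y}$. If $\vec{x}$ still has zeros among those indices, replace them by small positive numbers. I would use the perturbation route as the cleaner plan.
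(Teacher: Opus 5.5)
Your proof is correct. The positive case is the same as the paper's: Corollary \ref{C:UIN-f-convex-majorization-2} with $f(t)=e^t$. The difference lies in how you handle zero entries.

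The paper uses no limit. Let $p$ be the number of positive entries of $\vec{x}$. Then $\prod_{i\le p}x_{[i]}>0$ forces $y_{[1]},\ldots,y_{[p]}>0$. So the positive case applies directly to the length-$p$ vectors of leading sorted entries. For $k>p$, the left partial sums stay fixed, while the right ones are nondecreasing because $\vec{y}$ has nonnegative entries.

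Your perturbation route is also valid. You replace $\vec{y}$ by $\vec{y}+\epsilon\vec{1}$, replace the zeros of $\vec{x}$ by $\delta$, and let $\epsilon\to 0$. It suffices that $\delta<\min\{x_i:x_i>0\}$ and $\delta^{k-r}\prod_{i\le r}x_{[i]}\le\epsilon^k$ for $k>r$. This forces $\delta(\epsilon)\to 0$, so the limit goes through. The advantage is that everything reduces uniformly to the strictly positive case via continuity of partial sums. The cost is the bookkeeping tying $\delta$ to $\epsilon$ and the extra limiting step.

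Your alternative truncation is one adjustment away from the paper's argument. Truncate at the number of positive entries of $\vec{x}$ instead of $\vec{y}$. Then all retained entries of both vectors are positive at once, and no further perturbation is needed.
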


\begin{proof}
First, suppose that every entry of $\vec{x}$ and $\vec{y}$ is positive. The assumption $\vec{x} \prec_{\mathrm{wlog}} \vec{y}$ means that $\log \vec{x} \prec_{\w} \log \vec{y}$. Apply Corollary \ref{C:UIN-f-convex-majorization-2} with $f(t)=e^t$ to conclude that  $\vec{x} \prec_{\w} \vec{y}$. Now suppose that some entries are zero.
If $p$ is the number of positive components of $\vec{x}$, then the product inequalities force the first $p$ components of $\vec{y}\!\!\downarrow$ to be positive.  Then apply the positive case to the length-$p$ vectors consisting of the positive components.  For $k>p$, 
the left partial sums remain constant, while the right partial sums are nondecreasing.
\end{proof}

\section{Monotonicity} \label{S:UIN-monotonicity}
We are at the point of presenting one of the major ingredients needed in the proof of von Neumann's theorem. This concerns the monotonicity of symmetric gauge functions. Recall the definition of these functions and their properties from \S\ref{S:UIN-absolute-norms}.

\begin{theorem} \label{T:UIN-monotonicity}
Let $\Phi$ be a symmetric gauge function on $\R^n$. If $\vec{x},\vec{y} \in \R^n_{+}$ and $\vec{x} \prec_{\w} \vec{y}$, then $\Phi(\vec{x}) \leq \Phi(\vec{y})$.
\end{theorem}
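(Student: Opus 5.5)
The plan is to reduce weak majorization to ordinary majorization plus an entrywise comparison, and then treat the two parts separately. By Theorem \ref{T:UIN-weak-major-char-2}, since $\vec{x} \prec_{\w} \vec{y}$, there is a $\vec{z} \in \R^n$ with $\vec{x} \leq_{\e} \vec{z}$ and $\vec{z} \prec \vec{y}$. The target inequality will come from the chain
\begin{equation*}
\Phi(\vec{x}) \leq \Phi(\vec{z}) \leq \Phi(\vec{y}).
\end{equation*}

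For the second inequality, I will use Rad\'o's theorem (Theorem \ref{T:UIN-Rado}). Since $\vec{z} \prec \vec{y}$, we can write $\vec{z} = \sum_{j} t_j P_j \vec{y}$ with $t_j \geq 0$, $\sum_j t_j = 1$, and $P_j \in \Perm_n$. The triangle inequality and homogeneity give $\Phi(\vec{z}) \leq \sum_j t_j \Phi(P_j \vec{y})$. Symmetry of $\Phi$ gives $\Phi(P_j\vec{y}) = \Phi(\vec{y})$, so $\Phi(\vec{z}) \leq \Phi(\vec{y})$.

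For the first inequality I want to invoke monotonicity of $\Phi$; monotone and absolute are equivalent by Lemma \ref{L:UIN-monotone=absolute}, and $\Phi$ is both by definition. Monotonicity requires $|\vec{x}| \leq_{\e} |\vec{z}|$, i.e.\ $0 \leq x_i \leq z_i$. We have $x_i \geq 0$ because $\vec{x} \in \R^n_+$, and $x_i \leq z_i$ holds by construction. So $z_i \geq x_i \geq 0$, whence $|\vec{x}| \leq_{\e} |\vec{z}|$ and $\Phi(\vec{x}) \leq \Phi(\vec{z})$.

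The only point needing care is this last step, namely that $\vec{z}$ has nonnegative entries so that the entrywise comparison becomes a comparison of absolute values. It is handled by the observation $z_i \geq x_i \geq 0$. No separate argument is needed here to show that $\vec{z}$ is nonnegative when $\vec{y}$ is, since $\vec{x} \geq_{\e} \vec{0}$ already forces it. Combining the two inequalities gives $\Phi(\vec{x}) \leq \Phi(\vec{y})$.
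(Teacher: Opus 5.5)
Your proposal is correct and is essentially the paper's argument. You obtain $\vec{z}$ with $\vec{x} \leq_{\e} \vec{z}$ and $\vec{z} \prec \vec{y}$, use monotonicity of $\Phi$ to get $\Phi(\vec{x}) \leq \Phi(\vec{z})$, and use Rad\'o's theorem with the triangle inequality and symmetry to get $\Phi(\vec{z}) \leq \Phi(\vec{y})$. The one small difference, which is slightly cleaner, is that you derive $\vec{z} \in \R^n_{+}$ from $\vec{z} \geq_{\e} \vec{x} \geq_{\e} \vec{0}$, so you need only the statement of Theorem~\ref{T:UIN-weak-major-char-2}; the paper instead appeals to its proof, where $\vec{z} = B\vec{y}$ with $B \in \DS_n$.
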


\begin{proof}
The proof of Theorem \ref{T:UIN-weak-major-char-2} provides a $\vec{z} \in \R^n_{+}$ such that $\vec{x} \leq_{\e} \vec{z}$ and $\vec{z} \prec \vec{y}$. By Rad\'o's theorem (Theorem \ref{T:UIN-Rado}), there are  $0 \leq t_{\sigma} \leq 1$ with $\sum_{\sigma \in \mathfrak{S}_n} t_{\sigma} =1$ such that
\begin{equation}\label{E:UIN-monotonicity1}
\vec{z} = \sum_{\sigma \in \mathfrak{S}_n} t_{\sigma} \vec{y}_\sigma.
\end{equation}
    Since a symmetric gauge function is monotone and the entries of $\vec{x}$ and $\vec{z}$ are nonnegative, the relation $\vec{x} \leq_{\e} \vec{z}$ implies $\Phi(\vec{x}) \leq \Phi(\vec{z})$. Since a symmetric gauge function is a norm on $\R^n$, \eqref{E:UIN-monotonicity1} gives
\begin{equation*}
\Phi(\vec{z}) \leq \sum_{\sigma \in \mathfrak{S}_n} t_{\sigma} \Phi(\vec{y}_\sigma).
\end{equation*}
Finally, a symmetric gauge function is permutation invariant, so $\Phi(\vec{y}_\sigma)=\Phi(\vec{y})$. Putting this together, we conclude that
\begin{equation*}
\Phi(\vec{x}) \leq \Phi(\vec{z}) \leq \sum_{\sigma \in \mathfrak{S}_n} t_{\sigma} \Phi(\vec{y}_\sigma) = \sum_{\sigma \in \mathfrak{S}_n} t_{\sigma} \Phi(\vec{y}) = \Phi(\vec{y}). \qedhere
\end{equation*}
\end{proof}

\section{The Courant--Fischer theorem} \label{S:UIN-the-min-max-thm}

Let $\h_n$ denote the set of all $n \times n$ Hermitian matrices.  Since
the eigenvalues of each $A \in \h_n$ are real, we write them in decreasing order
\begin{equation*}
\lambda_1(A) \geq \lambda_2(A) \geq \cdots \geq \lambda_n(A).
\end{equation*}
The following result of Courant--Fischer was first obtained by Ernst Fischer in 1905 \cite{MR1547416}, and later in 1920 by Richard Courant \cite{MR1544417}. However, the theorem became widely known through the text \cite{MR65391, MR140802}.

\begin{theorem}[Courant--Fischer \cite{MR1547416, MR1544417}] \label{T:max-min}
Let $A \in \h_n$. For any subspace $\V \subset \C^n$ of dimension $k$, 
\begin{equation*}
\lambda_k(A) 
\geq \min_{\substack{\vec{x} \in \V \\ \|\vec{x}\|=1}}  
\langle A \vec{x},\vec{x} \rangle.
\end{equation*}
Moreover, there is a subspace $\V_0 \subset \C^n$ of dimension $k$ such that
\begin{equation*}
\lambda_k(A) = \min_{\substack{\vec{x} \in V_0 \\ \|\vec{x}\|=1}} \langle A \vec{x},\vec{x} \rangle.
\end{equation*}
\end{theorem}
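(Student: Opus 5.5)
The plan is to work in an orthonormal eigenbasis of $A$ and settle both claims with a dimension count. By the spectral theorem for Hermitian matrices there is an orthonormal basis $\vec{u}_1,\vec{u}_2,\ldots,\vec{u}_n$ of $\C^n$ with $A\vec{u}_i = \lambda_i(A)\vec{u}_i$. If $\vec{x} = \sum_i c_i \vec{u}_i$, then
\begin{equation*}
\langle A\vec{x},\vec{x}\rangle = \sum_{i=1}^{n} \lambda_i(A)\,|c_i|^2
\quad\text{and}\quad
\|\vec{x}\|^2 = \sum_{i=1}^{n} |c_i|^2 .
\end{equation*}
So on the unit sphere the quadratic form is a convex combination of the eigenvalues, with weights $|c_i|^2$. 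Every step below rests on this identity.

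\medskip\noindent\textbf{The inequality.} Let $\V$ be any subspace of dimension $k$. Put $\W = \Span\{\vec{u}_k,\vec{u}_{k+1},\ldots,\vec{u}_n\}$, which has dimension $n-k+1$. Then
\begin{equation*}
\dim(\V\cap\W) \geq k + (n-k+1) - n = 1,
\end{equation*}
so there is a unit vector $\vec{x}\in\V\cap\W$. For this $\vec{x}$ we have $c_i = 0$ for $i<k$. Since the eigenvalues are in decreasing order, the identity gives $\langle A\vec{x},\vec{x}\rangle = \sum_{i\geq k}\lambda_i(A)|c_i|^2 \leq \lambda_k(A)$. The minimum over unit vectors of $\V$ is at most this value, hence at most $\lambda_k(A)$.

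The minimum is actually attained, since the unit sphere of $\V$ is compact and $\vec{x}\mapsto\langle A\vec{x},\vec{x}\rangle$ is continuous and real-valued because $A$ is Hermitian. So writing $\min$ is justified.

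\medskip\noindent\textbf{The equality.} Take $\V_0 = \Span\{\vec{u}_1,\ldots,\vec{u}_k\}$. For a unit vector $\vec{x}\in\V_0$ the identity gives $\langle A\vec{x},\vec{x}\rangle = \sum_{i\leq k}\lambda_i(A)|c_i|^2 \geq \lambda_k(A)$. Equality holds at $\vec{x}=\vec{u}_k$, so the minimum over $\V_0$ equals $\lambda_k(A)$. Together with the first part, this shows $\lambda_k(A)$ is the maximum over all $k$-dimensional subspaces of that minimum.

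\medskip\noindent The only step that needs real care is the dimension-counting intersection argument in the inequality, since everything else is a direct computation. It uses the formula $\dim(\V\cap\W)\geq \dim\V+\dim\W-n$, which follows from $\dim(\V+\W)\leq n$.
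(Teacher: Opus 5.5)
Your proof is correct and follows the paper's argument essentially verbatim: the inequality uses the same subspace $\W=\Span\{\vec{u}_k,\ldots,\vec{u}_n\}$ with the dimension count $\dim(\V\cap\W)\geq 1$, and the equality uses $\V_0=\Span\{\vec{u}_1,\ldots,\vec{u}_k\}$ with the minimum attained at $\vec{u}_k$. Your compactness remark justifying the use of $\min$ is a small addition that the paper leaves implicit.
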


\begin{proof}
Let $\V \subseteq \C^n$ be a subspace with $\dim \V = k$. 
Let $\vec{x}_1,\vec{x}_2,\ldots,\vec{x}_n$ be orthonormal eigenvectors of $A$ corresponding to eigenvalues $\lambda_1,\lambda_2,\ldots,\lambda_n$, respectively. Let
\begin{equation*}
\W = \Span\{\vec{x}_k,\vec{x}_{k+1},\ldots,\vec{x}_n\}.
\end{equation*}
Since
\begin{align*}
\dim (\V \cap \W) 
&= \dim \V + \dim \W - \dim (\V+\W)\\
&= k + (n-k+1) - \dim (\V+\W)\\
&= 1+n - \dim (\V+\W) \geq 1,
\end{align*}
we have $\V \cap \W \neq \{ \vec{0}\}$. Each
unit vector $\vec{x} \in \V \cap \W$ can be written as
\begin{equation*}
\vec{x} = \alpha_{k} \vec{x}_{k} + \alpha_{k+1} \vec{x}_{k+1} + \cdots + \alpha_{n} \vec{x}_n,
\end{equation*}
in which $|\alpha_k|^2+|\alpha_{k+1}|^2+\cdots+|\alpha_n|^2=1$. Then
\begin{equation*}
A\vec{x} = \alpha_{k} \lambda_{k} \vec{x}_{k} + \alpha_{k+1} \lambda_{k+1} \vec{x}_{k+1} + \cdots + \alpha_{n} \lambda_{n} \vec{x}_n,
\end{equation*}
so
\begin{align*}
\langle A \vec{x},\vec{x} \rangle
&= |\alpha_{k}|^2 \lambda_{k} + |\alpha_{k+1}|^2 \lambda_{k+1} + \cdots + |\alpha_{n}|^2 \lambda_{n}\\
&\leq \left( |\alpha_{k}|^2  + |\alpha_{k+1}|^2  + \cdots + |\alpha_{n}|^2 \right) \lambda_{k} = \lambda_{k}.
\end{align*}
Therefore, 
\begin{equation*}
\lambda_k(A) \geq \min_{\substack{\vec{x} \in \V \\ \|\vec{x}\|=1}}
\langle A\vec{x},\vec{x} \rangle
\end{equation*}
for any subspace $\V$ of dimension $k$.

Consider
\begin{equation*}
\V_0 = \Span\{\vec{x}_1,\vec{x}_{2},\ldots,\vec{x}_k\}.
\end{equation*}
Let $\vec{x}=\vec{x}_k$ and deduce that
\begin{equation*}
\langle A \vec{x}_k,\vec{x}_k \rangle = \lambda_k(A).
\end{equation*}
For any other unit vector $\vec{x} \in \V_0$, we can write
\begin{equation*}
\vec{x} = \alpha_{1} \vec{x}_{1} + \alpha_{2} \vec{x}_{2} + \cdots + \alpha_{k} \vec{x}_k,
\end{equation*}
in which $|\alpha_1|^2+|\alpha_{2}|^2+\cdots+|\alpha_k|^2=1$. Since
\begin{equation*}
A\vec{x} = \alpha_{1} \lambda_{1} \vec{x}_{1} + \alpha_{2} \lambda_{2} \vec{x}_{2} + \cdots + \alpha_{k} \lambda_{k} \vec{x}_k,
\end{equation*}
we conclude that
\begin{align*}
\langle A\vec{x},\vec{x} \rangle
&= |\alpha_{1}|^2 \lambda_{1} + |\alpha_{2}|^2 \lambda_{2} + \cdots + |\alpha_{k}|^2 \lambda_{k}\\
&\geq \left( |\alpha_{1}|^2  + |\alpha_{2}|^2  + \cdots + |\alpha_{k}|^2 \right) \lambda_{k} = \lambda_{k}.
\end{align*}
Therefore,
\begin{equation*}
\lambda_k(A) = \min_{\substack{\vec{x} \in \V_0 \\ \|\vec{x}\|=1}}  \langle A\vec{x},\vec{x} \rangle. \qedhere
\end{equation*} 
\end{proof}

The Courant--Fischer theorem can be written as
\begin{equation}\label{E:UIN-max-min-formula-1}
\lambda_k(A) = \max_{\substack{\V \subset \C^n \\ \dim \V=k}}   \, \,
\min_{\substack{x \in \V \\ \|\vec{x}\|=1}}  \langle A\vec{x},\vec{x} \rangle.
\end{equation}
If we replace $A$ by $-A$, the relation above gives
\begin{equation}\label{E:UIN-max-min-formula-1b}
\lambda_k(A) = \min_{\substack{\V \subset \C^n \\ \dim \V=n-k+1}} 
\max_{\substack{\vec{x} \in \V \\ \|\vec{x}\|=1}}  \langle A\vec{x},\vec{x} \rangle.
\end{equation}
Consequently, Theorem \ref{T:max-min} is also called the max-min theorem or min-max theorem. 

Recall that the singular values of $A \in \M_n$ are nonnegative square roots of the eigenvalues of $A^*A$.  Apply Theorem \ref{T:max-min} and take square roots to see that for any subspace $\V \subset \C^n$ of dimension $k$, 
\begin{equation*}
s_k(A) \geq \min_{\substack{\vec{x} \in \V \\ \|\vec{x}\|=1}}  \|A\vec{x}\|
\end{equation*}
and there is a subspace $\V_0 \subset \C^n$ of dimension $k$ such that
\begin{equation*}
s_k(A) = \min_{\substack{\vec{x} \in \V_0 \\ \|\vec{x}\|=1}}  \|A\vec{x}\|.
\end{equation*}
As in \eqref{E:UIN-max-min-formula-1} and \eqref{E:UIN-max-min-formula-1b}, we may also write
\begin{align}
s_k(A) 
&= \max_{\substack{\V \subset \C^n \\ \dim \V=k}} \,\, \min_{\substack{ \vec{x} \in \V \\ \|\vec{x}\|=1}}  \|Ax\| \notag\\
&= \min_{\substack{\V \subset \C^n \\ \dim \V=n-k+1}} \,\, \max_{\substack{\vec{x} \in \V \\ \|\vec{x}\|=1}} \,\, \|A\vec{x}\|. \label{E:UIN-max-min-formula-2}
\end{align}
As a special case of \eqref{E:UIN-max-min-formula-2}, we see that
\begin{equation}\label{E:UIN-s1=norm-1}
s_1(A) = \|A\|_{\mathrm{op}},
\end{equation}
which we observed in \eqref{E:UIN-spectral-norm-s1}. We can use compound matrices to get more results from this special case. Let us first recall the definition and properties of compound matrices \cite{MR2978290, MR3076701}. 

Let $A \in \M_n$ and fix $1 \leq k \leq n$. Then for
\begin{equation*}
1 \leq i_1 < i_2 < \cdots < i_k \leq n 
\quad\text{and}\quad 
1 \leq j_1 < j_2 < \cdots < j_k \leq n, 
\end{equation*}
we denote the $k \times k$ submatrix of $A$ formed with rows $i_1,i_2,\ldots,i_k$ and 
columns $j_1,j_2,\ldots,j_k$ of $A$ by 
\begin{equation}\label{eq:Submatrix}
A[i_1,i_2,\ldots,i_k\mid j_1,j_2,\ldots,j_k].
\end{equation}
Put a lexicographical order on 
\begin{equation*}
\mathcal{E}(k,n) := \{(\ell_1,\ell_2,\ldots,\ell_k) : 1 \leq \ell_1 < \ell_2 < \cdots < \ell_k \leq n \}.
\end{equation*}
This set has $\binom{k}{n}$ elements.  The \emph{compound matrix} $C_k(A)$ is a $\binom{k}{n} \times \binom{k}{n}$ matrix whose entries are
\begin{equation*}
\det A[i_1,i_2,\ldots,i_k \mid j_1,j_2,\ldots,j_k],
\end{equation*}
and the rows and columns are ordered as in $\mathcal{E}(k,n)$. The main properties of the compound matrix that we need are the following.
\begin{enumerate}[(a)]
\item The compound operation is multiplicative:
\begin{equation}\label{E:UIN-s1=norm-2a}
C_k(AB) = C_k(A)  C_k(B).
\end{equation}

\item If $\boldsymbol{\lambda}(A)=[\lambda_1~\lambda_2~\ldots~\lambda_n]^{\T}$ is the vector of eigenvalues of $A$, then
\begin{equation}\label{E:UIN-s1=norm-2b}
\boldsymbol{\lambda}\big( C_k(A) \big) =
\big\{ \lambda_{\ell_1}\lambda_{\ell_2}\cdots \lambda_{\ell_k} : 1 \leq \ell_1 < \ell_2 < \cdots < \ell_k \leq n \}.
\end{equation}

\item If $\vec{s}(A)= [s_1~s_2~\ldots~s_n]^{\T}$ is the vector of singular values of $A$ in decreasing order, then
\begin{equation}\label{E:UIN-s1=norm-2c}
\vec{s}\big( C_k(A) \big) =
\big\{ s_{\ell_1}s_{\ell_2}\cdots s_{\ell_k} : 1 \leq \ell_1 < \ell_2 < \cdots < \ell_k \leq n \big\},
\end{equation}
in which the singular values of $C_k(A)$ are in decreasing order.
\end{enumerate}

The set notation in \eqref{E:UIN-s1=norm-2b} and \eqref{E:UIN-s1=norm-2c} is somewhat misleading
since multiplicity is taken into account.  That is, $\boldsymbol{\lambda}\big( C_k(A) \big)$ and $\vec{s}\big( C_k(A) \big)$ are multisets. 

As a special case of \eqref{E:UIN-s1=norm-2c}, we have
\begin{equation}\label{E:UIN-s1=norm-3}
s_1\big( C_k(A) \big) = s_1s_2\cdots s_k.
\end{equation}

Given $\vec{x}=[x_1~x_2~\ldots~x_n]^{\T}$ and $\vec{y}=[y_1~y_2~\ldots~y_n]^{\T}$, their Hadamard--Schur product is
\begin{equation*}
\vec{x}*\vec{y} := [x_1y_1 \,\, x_2y_2\,\, \ldots, \,\, x_ny_n]^{\T}.
\end{equation*}

\begin{corollary}[A.~Horn \cite{MR45316}]
If $A,B \in \M_n$, then
\begin{equation} \label{C:UIN-s1=norm-4}
s(AB) \prec_{\log} s(A)*s(B).
\end{equation}
\end{corollary}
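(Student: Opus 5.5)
The plan is to prove two things: the weak inequalities $\prod_{i=1}^{k} s_i(AB) \leq \prod_{i=1}^{k} s_i(A)s_i(B)$ for each $1 \leq k \leq n$, and equality when $k=n$. Since $\vec{s}(A)$ and $\vec{s}(B)$ are already in decreasing order, the vector $\vec{s}(A)*\vec{s}(B)$ is decreasing as well. Hence its $k$ largest entries have product $\prod_{i=1}^k s_i(A)s_i(B)$, and these partial products are exactly what the definition of $\prec_{\log}$ requires.

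For the inequalities I will use compound matrices to reduce to the case $k=1$. By \eqref{E:UIN-s1=norm-3} applied to $AB$, then the multiplicativity \eqref{E:UIN-s1=norm-2a}, then \eqref{E:UIN-s1=norm-s1=norm} and the submultiplicativity \eqref{E:UIN-spectral-norm-s-mul} of the spectral norm, and finally \eqref{E:UIN-s1=norm-3} again for $A$ and $B$, I get
\begin{equation*}
\prod_{i=1}^{k} s_i(AB) = s_1\big(C_k(AB)\big) = \big\|C_k(A)C_k(B)\big\|_{\mathrm{op}} \leq \big\|C_k(A)\big\|_{\mathrm{op}}\,\big\|C_k(B)\big\|_{\mathrm{op}} = \prod_{i=1}^{k} s_i(A)\prod_{i=1}^{k} s_i(B).
\end{equation*}
One subtlety: \eqref{E:UIN-s1=norm-1} and \eqref{E:UIN-spectral-norm-s-mul} were stated for $n\times n$ matrices. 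They apply verbatim to the square matrices $C_k(\cdot)$, which have size $\binom{n}{k}$.

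For the equality at $k=n$, note that $\prod_{i=1}^n s_i(X)^2 = \det(X^*X) = |\det X|^2$, so $\prod_{i=1}^n s_i(X) = |\det X|$. Multiplicativity of the determinant then gives $\prod s_i(AB) = |\det A|\,|\det B| = \prod s_i(A)\prod s_i(B)$. Alternatively, this is the case $k=n$ of the compound argument, since $C_n(X)=\det X$ is a $1\times 1$ matrix and the inequality above becomes an equality.

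The only real obstacle is justifying that property \eqref{E:UIN-s1=norm-3} is legitimately available, namely that the largest singular value of $C_k(A)$ is $s_1\cdots s_k$. This is taken as given in the paper's list of compound-matrix properties, so no further work is needed. If it had to be checked, I would apply multiplicativity to the SVD: the compound of a unitary matrix is unitary (by $C_k(U)^*=C_k(U^*)$ and multiplicativity), and $C_k(\diag \vec{s})$ is diagonal with entries the $k$-fold products of the $s_i$.
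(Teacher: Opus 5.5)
Your proof is correct and follows the paper's argument essentially verbatim. Compound matrices turn the $k$-th partial product into $s_1(C_k(AB)) = \|C_k(A)C_k(B)\|_{\mathrm{op}}$, submultiplicativity gives the inequality, and $|\det(AB)| = |\det A|\,|\det B|$ gives equality at $k=n$; your remarks that $\vec{s}(A)*\vec{s}(B)$ is already decreasing and that the identity $s_1 = \|\cdot\|_{\mathrm{op}}$ (which should be cited as \eqref{E:UIN-s1=norm-1}) applies to the $\binom{n}{k}\times\binom{n}{k}$ matrices $C_k(\cdot)$ are points the paper leaves implicit.
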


\begin{proof}
Fix $1 \leq k \leq n$. Then
\begin{align*}
\prod_{i=1}^{k} s_i(AB) 
&= s_1\big( C_k(AB) \big) && \text{by  \eqref{E:UIN-s1=norm-3}}\\
&= \|C_k(AB)\|_{\mathrm{op}} && \text{by  \eqref{E:UIN-s1=norm-1}}\\
&= \|C_k(A) \, C_k(B)\|_{\mathrm{op}} && \text{by  \eqref{E:UIN-s1=norm-2a}}\\
&\leq  \|C_k(A)\|_{\mathrm{op}} \, \|C_k(B)\|_{\mathrm{op}} && \text{by  \eqref{E:UIN-spectral-norm-s-mul}}\\
&=  s_1\big( C_k(A) \big) \, s_1\big( C_k(B) \big) && \text{by  \eqref{E:UIN-s1=norm-1}}\\
&=  \prod_{j=1}^{k} s_j(A) \, \prod_{j=1}^{k} s_j(B) && \text{by  \eqref{E:UIN-s1=norm-3}}\\
&= \prod_{j=1}^{k} s_j(A) \,  s_j(B). 
\end{align*}
Equality holds if $k=n$ since
\begin{align*}
\prod_{i=1}^{n} s_i(AB) 
&= |\det (AB)| && \text{by \eqref{E:UIN-A=udiagV-0}}\\
&=|\det (A)| \,\, |\det (B)| \\
&= \prod_{j=1}^{n} s_j(A) \, \prod_{j=1}^{n} s_j(B) \\
&= \prod_{j=1}^{n} s_j(A) s_j(B). 
\end{align*}
Therefore, $s(AB) \prec_{\log} s(A)*s(B)$.
\end{proof}

If we apply Corollary \ref{C:UIN-f-convex-majorization-3} to \eqref{C:UIN-s1=norm-4}, then we see that for each $A,B \in \M_n$,
\begin{equation}\label{C:UIN-s1=norm-5}
s(AB) \prec_{\w} s(A)*s(B).
\end{equation}

We next use compound matrices to obtain a result on weak majorization in which eigenvalues of $A$ are involved.  Suppose that $A \in \M_n$; that is, $A$ is not necessarily Hermitian.
Let $\lambda_1(A),\lambda_2(A),\ldots,\lambda_n(A)$ denote the eigenvalues of $A$,
repeated according to their multiplicities and ordered so that
\begin{equation*}
|\lambda_1(A)| \geq |\lambda_2(A)| \geq \cdots \geq |\lambda_n(A)|.
\end{equation*}

\begin{corollary}[Weyl \cite{MR30693}]
Let $A \in \M_n$. Then
\begin{equation} \label{L:UIN-s1=norm-6}
|[\lambda_1(A)~\lambda_2(A)~\ldots~\lambda_n(A)]^{\T}| \prec_{\log} s(A).  
\end{equation}
\end{corollary}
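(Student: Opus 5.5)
We will prove Weyl's inequality by the same compound-matrix device used for Horn's corollary. The idea is to reduce the $k$th product inequality to the case $k=1$, namely $|\lambda_1(A)| \leq s_1(A)$. We then handle the determinant separately to obtain equality at $k=n$.

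\emph{The case $k=1$.} Let $\vec{x}$ be a unit eigenvector of $A$ with $A\vec{x} = \lambda_1(A)\vec{x}$. Then
\begin{equation*}
|\lambda_1(A)| = \|A\vec{x}\| \leq \|A\|_{\mathrm{op}} = s_1(A),
\end{equation*}
where the last equality is \eqref{E:UIN-s1=norm-1}. This holds for every $A \in \M_n$, not only Hermitian ones, so it applies in particular to every compound matrix.

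\emph{Products of the $k$ largest eigenvalue moduli.} Fix $1 \leq k \leq n$. By \eqref{E:UIN-s1=norm-2b}, the eigenvalues of $C_k(A)$ are the products $\lambda_{\ell_1}\cdots\lambda_{\ell_k}$ with $\ell_1 < \cdots < \ell_k$. Since the $|\lambda_i(A)|$ are arranged in decreasing order, the largest modulus among these products is $|\lambda_1(A)\cdots\lambda_k(A)|$. Applying the case $k=1$ to $C_k(A)$ and then \eqref{E:UIN-s1=norm-3} gives
\begin{equation*}
\prod_{i=1}^{k} |\lambda_i(A)| = |\lambda_1(C_k(A))| \leq s_1(C_k(A)) = \prod_{i=1}^{k} s_i(A).
\end{equation*}
The vector $|\boldsymbol{\lambda}(A)|$ is already in decreasing order, so these are exactly the partial products required by the definition of $\prec_{\mathrm{wlog}}$.

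\emph{Equality at $k=n$.} Using the singular value decomposition \eqref{E:UIN-A=udiagV-0},
\begin{equation*}
\prod_{i=1}^{n} |\lambda_i(A)| = |\det A| = \prod_{i=1}^{n} s_i(A),
\end{equation*}
which upgrades the weak log-majorization to $\prec_{\log}$.

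The only step needing care is the middle one. We must make sure that the top eigenvalue modulus of $C_k(A)$ really is the product of the $k$ largest $|\lambda_i|$, with multiplicities counted correctly as in the multiset convention. We must also make sure that zero eigenvalues cause no trouble; they do not, because the inequalities are stated multiplicatively rather than through logarithms.
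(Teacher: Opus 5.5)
Your proposal is correct and takes essentially the same route as the paper: $\prod_{i=1}^{k}|\lambda_i(A)|$ is the spectral radius of $C_k(A)$, which is at most $\|C_k(A)\|_{\mathrm{op}} = s_1(C_k(A)) = \prod_{i=1}^{k} s_i(A)$, and equality at $k=n$ comes from $|\det A|$. The one difference is that you justify the spectral radius bound $\rho(X) \leq \|X\|_{\mathrm{op}}$ with an eigenvector argument, whereas the paper uses it without comment.
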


\begin{proof}
Let $\rho(\cdot)$ denote the \emph{spectral radius} of the indicated matrix; that is, the maximum modulus attained by an eigenvalue of the matrix.
For a fixed $1 \leq k \leq n$,
\begin{align*}
\prod_{i=1}^{k} |\lambda_i(A)| &= \rho\big( C_k(A) \big) && \text{by \eqref{E:UIN-s1=norm-2b}}\\
&\leq \|C_k(A)\|_{\mathrm{op}} \\
&= s_1\big( C_k(A) \big) && \text{by \eqref{E:UIN-s1=norm-1}}\\
&= \prod_{j=1}^{k} s_j(A) && \text{by \eqref{E:UIN-s1=norm-3}}.
\end{align*}
Equality holds for $k=n$ since
\begin{equation*}
\prod_{i=1}^{n} |\lambda_i(A)| = |\det A| = \prod_{i=1}^{n} s_i(A). \qedhere
\end{equation*}
\end{proof}

If we apply Corollary \ref{C:UIN-f-convex-majorization-3} to \eqref{L:UIN-s1=norm-6}, then
we see that for each $A \in \M_n$,
\begin{equation} \label{C:UIN-s1=norm-7}
|[\lambda_1(A)~\lambda_2(A)~\ldots~\lambda_n(A)]^{\T}| \prec_{\w} \vec{s}(A).
\end{equation}
In particular, 
\begin{equation} \label{C:UIN-s1=norm-7b}
|\tr A| \leq \sum_{j=1}^{n} s_j(A).
\end{equation}

\section{The Cauchy interlacing theorem and Weyl monotonicity principle} \label{UIN-Cauchy}
Given $A \in \h_n$ and $\{i_1,i_2,\ldots,i_m\} \subseteq \{1,2,\ldots,n\}$ with cardinality $m$, 
\begin{equation*}
B = A[i_1,i_2,\ldots,i_m \mid i_1,i_2,\ldots,i_m] \in \M_m
\end{equation*}
is a \emph{principal submatrix} of $A$. Note that $B$ is also Hermitian and hence its eigenvalues are real. 
Thus, we may ask about the relationship between the eigenvalues of $A$ and those of $B$.  
In 1829, Cauchy established interlacing results for the eigenvalues of principal submatrices of Hermitian matrices \cite{Cauchy-1,MR2866794}. A modern reference is \cite[Thm.~4.3.28]{MR2978290}.
This result is also known as the Poincar\'{e} separation theorem.  

\begin{theorem}[Cauchy \cite{Cauchy-1, MR2866794}] \label{T:UIN-cauchy}
If $A \in \h_n$ and $B \in \h_m$ is a principal submatrix of $A$, then 
\begin{equation*}
\lambda_j(A) \geq \lambda_j(B)  \geq \lambda_{j+n-m}(A)
\end{equation*}
for all $1 \leq j \leq m$.
\end{theorem}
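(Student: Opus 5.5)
The plan is to derive both inequalities directly from the Courant--Fischer theorem (Theorem \ref{T:max-min}) and its min--max form \eqref{E:UIN-max-min-formula-1b}. The device is to identify $\C^m$ with the coordinate subspace of $\C^n$ spanned by $\vec{e}_{i_1},\ldots,\vec{e}_{i_m}$. Let $J:\C^m\to\C^n$ be the isometric embedding sending the $r$th standard basis vector to $\vec{e}_{i_r}$. Then $B=J^*AJ$, and
\begin{equation*}
\langle B\vec{u},\vec{u}\rangle=\langle AJ\vec{u},J\vec{u}\rangle, \qquad \|J\vec{u}\|=\|\vec{u}\| \quad\text{for all } \vec{u}\in\C^m.
\end{equation*}
Consequently every $k$-dimensional subspace $\V\subset\C^m$ gives a $k$-dimensional subspace $J\V\subset\C^n$ on which the Rayleigh quotients of $A$ coincide with those of $B$ on $\V$.

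For the upper bound $\lambda_j(A)\geq\lambda_j(B)$, I will apply the second part of Theorem \ref{T:max-min} to $B$. This gives $\V_0\subset\C^m$ with $\dim\V_0=j$ and
\begin{equation*}
\lambda_j(B)=\min_{\vec{u}\in\V_0,\ \|\vec{u}\|=1}\langle B\vec{u},\vec{u}\rangle.
\end{equation*}
This minimum equals the minimum of $\langle A\vec{x},\vec{x}\rangle$ over unit vectors $\vec{x}\in J\V_0$. Since $\dim J\V_0=j$, the first part of Theorem \ref{T:max-min} applied to $A$ shows that this quantity is at most $\lambda_j(A)$.

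For the lower bound $\lambda_j(B)\geq\lambda_{j+n-m}(A)$, I will use the min--max form \eqref{E:UIN-max-min-formula-1b}, or equivalently apply the argument above to $-A$ and $-B$. Note that $-B$ is the corresponding principal submatrix of $-A$, and $\lambda_j(-B)=-\lambda_{m-j+1}(B)$. Applying the first inequality to $-A,-B$ with index $m-j+1$ gives
\begin{equation*}
-\lambda_{j}(B)=\lambda_{m-j+1}(-B)\leq\lambda_{m-j+1}(-A)=-\lambda_{n-m+j}(A),
\end{equation*}
which is the claim once the index $m-j+1$ is relabeled as $j$. The only point needing care is this index bookkeeping: $\lambda_k(-A)=-\lambda_{n-k+1}(A)$, and $(n-(m-j+1)+1)=n-m+j$. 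I will check it explicitly.

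I expect no serious obstacle. The main subtlety is justifying that $\dim J\V=\dim\V$ and that unit vectors correspond, which holds because $J$ is an isometry. If one prefers a direct argument for the lower bound, I would take $\W$ to be the span of the eigenvectors of $B$ for $\lambda_j(B),\ldots,\lambda_m(B)$, so $\dim J\W=m-j+1$. I would then intersect $J\W$ with the span of the eigenvectors of $A$ for $\lambda_1(A),\ldots,\lambda_{j+n-m}(A)$. The dimensions add to $n+1$, so the intersection contains a unit vector $\vec{x}$, and for it
\begin{equation*}
\lambda_{j+n-m}(A)\leq\langle A\vec{x},\vec{x}\rangle\leq\lambda_j(B),
\end{equation*}
exactly as in the proof of Theorem \ref{T:max-min}.
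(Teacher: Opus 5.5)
Your proposal is correct and is essentially the paper's proof. The paper also takes the $j$-dimensional subspace for $B$ from Courant--Fischer and embeds it in $\C^n$; it does this by permuting so that $B$ is the leading block and padding with zeros, where you use the isometry $J$. It then applies the first half of Courant--Fischer to $A$, and obtains the lower bound by applying the upper bound to $-A,-B$ with index $m-j+1$, exactly as your bookkeeping does. Your optional direct intersection argument is also valid.
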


\begin{proof}
If $m=n$, there is nothing to prove, so assume that $m <n$.
Since permutation similarity preserves eigenvalues, we may assume that
\begin{equation*}
A =
\begin{bmatrix}
B & C \\
C^* & D
\end{bmatrix}.
\end{equation*}
The Courant--Fischer theorem (Theorem \ref{T:max-min}) provides a subspace $\V_0 \subset \C^m$ of dimension $j$ such that
\begin{equation}\label{E:UIN-cauchy-B}
\lambda_j(B) = \min_{\substack{\vec{x} \in \V_0 \\ \|\vec{x}\|=1}} \langle B\vec{x},\vec{x} \rangle_{\C^m}.
\end{equation}
We embed $\V_0$ in $\C^n$ as follows. For each $\vec{x} \in \C^m$, let
\begin{equation*}
\widehat{\vec{x}} =
\begin{bmatrix}
\vec{x} \\
\vec{0}
\end{bmatrix} \in \C^n,
\end{equation*}
in which $\vec{0}$ has length $n-m$, and
\begin{equation*}
\widehat{\V}_0 = \{ \widehat{\vec{x}} : \vec{x} \in \V_0 \}.
\end{equation*}
Then for all $\vec{x} \in \C^m$, 
\begin{equation}\label{E:UIN-cauchy-BA}
\langle B\vec{x},\vec{x} \rangle_{\C^m} = \langle A \widehat{\vec{x}},\widehat{\vec{x}} \rangle_{\C^n}.
\end{equation}
Another application of the Courant--Fischer theorem yields
\begin{equation*}
\lambda_j(A) \geq \min_{\substack{\widehat{\vec{x}} \in \widehat{\V}_0 \\ \widehat{\|\vec{x}}\|=1}}  \langle A\widehat{\vec{x}},\widehat{\vec{x}} \rangle_{\C^n}.
\end{equation*}
Thus, \eqref{E:UIN-cauchy-B} and \eqref{E:UIN-cauchy-BA} ensure that
\begin{equation*}
\lambda_j(A) \geq \min_{\substack{\vec{x} \in \V_0 \\ \|\vec{x}\|=1}}  \langle B\vec{x},\vec{x} \rangle_{\C^m} = \lambda_j(B). 
\end{equation*}
For the second desired inequality, apply the first inequality to $-A$ and $-B$ with index $m-j+1$.
\end{proof}

\begin{example}\label{Example:Interlacing}
The eigenvalues of 
    \begin{equation*}
        A=
        \begin{bmatrix*}[r]
             4 & -1 & 2 & 3 & -1 \\
             -1 & 8 & 1 & -2 & 2 \\
             2 & 1 & 6 & 1 & -3 \\
             3 & -2 & 1 & -3 & -2 \\
             -1 & 2 & -3 & -2 & -5 \\
        \end{bmatrix*} 
    \end{equation*}
are (rounded to two decimal places) $9.58, 8.13, 2.84, -3.81,  -6.75$.
The eigenvalues of its $4 \times 4$ principal submatrix
\begin{equation*}
    B = 
\begin{bmatrix*}[r]
 4 & -1 & 2 & 3 \\
 -1 & 8 & 1 & -2 \\
 2 & 1 & 6 & 1 \\
 3 & -2 & 1 & -3 \\
\end{bmatrix*}
\end{equation*}
are (rounded to two decimal places) $8.89, 7.76, 2.68,  -4.32$.
As predicted by Theorem \ref{T:UIN-cauchy}, the eigenvalues of $A$ and $B$ interlace; see Figure \ref{Figure:SubmatrixInterlacing}.
\end{example}

\begin{figure}\centering
    \begin{tikzpicture}[xscale=0.5, yscale=0.5]
        \pgfmathsetmacro{\startl}{-6}
        \pgfmathsetmacro{\endl}{12}
        \def\aone{4}
        \def\atwo{3.763932023, 8.236067977}
        \def\athree{2.422910555, 7.153467305, 8.423622140}
        \def\afour{-4.322185309, 2.678158422, 7.758823368, 8.885203519}
        \def\afive{-6.745725045, -3.808065603, 2.841117710, 8.130909414, 9.581763524}
        \pgfmathsetmacro{\xx}{\startl+2}
        \pgfmathsetmacro{\startll}{\startl-.9}
        \pgfmathsetmacro{\endll}{\endl+.9}
        \foreach\y in {0,1.5,3,4.5,6}{
            \draw (\startll,\y)--(\endll,\y);
            \foreach\x in {\startl,\xx,...,\endl}{
                \pgfmathsetmacro{\upy}{\y + .1}
                \pgfmathsetmacro{\downy}{\y - .1}
                \draw[] (\x,\downy)--(\x,\upy);
            }
        }

        \foreach\x in {\startl,\xx,...,\endl}{
            \node[below] at (\x,-0.2){\pgfmathprintnumber{\x}};
        }

        \pgfmathsetmacro{\radius}{.15}
        \foreach\x in \aone{
            \draw[blue] (\x,0)--(\x,6);
            \fill[blue] (\x,0) circle (\radius);
            \draw[black] (\x,0) circle (\radius);
        }
        \foreach\x in \atwo{
            \draw[red] (\x,1.5)--(\x,6);
            \fill[red] (\x,1.5) circle (\radius);
            \draw[black] (\x,1.5) circle (\radius);
        }
        \foreach\x in \athree{
            \draw[cyan] (\x,3)--(\x,6);
            \fill[cyan] (\x,3) circle (\radius);
            \draw[black] (\x,3) circle (\radius);
        }
        \foreach\x in \afour{
            \draw[orange] (\x,4.5)--(\x,6);
            \fill[orange] (\x,4.5) circle (\radius);
            \draw[black] (\x,4.5) circle (\radius);
        }
        \foreach\x in \afive{
            \fill[yellow] (\x,6) circle (\radius);
            \draw[black] (\x,6) circle (\radius);
        }
        \pgfmathsetmacro{\xx}{\endll+1.0}
        \node at (\xx+0.5,0) {$n=1$};
        \node at (\xx+0.5,1.5) {$n=2$};
        \node at (\xx+0.5,3) {$n= 3$};
        \node at (\xx+0.5,4.5) {$n=4$};
        \node at (\xx+0.5,6) {$n=5$};
    \end{tikzpicture}
    \caption{Eigenvalues of the $n \times n$ leading principal submatrices of 
    $A$ in Example \ref{Example:Interlacing}.}
    \label{Figure:SubmatrixInterlacing}
\end{figure}

Here is another important application of the Courant--Fischer theorem \cite[Thm.~4.3.1]{MR2978290}.

\begin{theorem}[Weyl Monotonicity Principle] \label{T:UIN-Cauchy-Weyl-1}
Let $A,B \in \h_n$. Assume that $1 \leq r,s \leq n$, with $r+s \geq n+1$. Then
\begin{equation*}
\lambda_r(A) + \lambda_s(B) \leq \lambda_{r+s-n}(A+B).
\end{equation*}
In particular, if $A \leq B$, then
\begin{equation*}
\lambda_r(A) \leq \lambda_{r}(B), \qquad 1 \leq r \leq n.
\end{equation*}
\end{theorem}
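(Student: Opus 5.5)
We prove the first inequality with the subspace version of the Courant--Fischer theorem (Theorem \ref{T:max-min}), by building one subspace on which $\langle (A+B)\vec{x},\vec{x}\rangle$ is bounded below by $\lambda_r(A)+\lambda_s(B)$.

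Let $\vec{u}_1,\ldots,\vec{u}_n$ be orthonormal eigenvectors of $A$ for $\lambda_1(A),\ldots,\lambda_n(A)$. Let $\vec{v}_1,\ldots,\vec{v}_n$ be orthonormal eigenvectors of $B$ for $\lambda_1(B),\ldots,\lambda_n(B)$. Set
\begin{equation*}
\U=\Span\{\vec{u}_1,\ldots,\vec{u}_r\}
\qquad\text{and}\qquad
\W=\Span\{\vec{v}_1,\ldots,\vec{v}_s\}.
\end{equation*}
The computation in the second half of the proof of Theorem \ref{T:max-min} gives
\begin{equation*}
\langle A\vec{x},\vec{x}\rangle\geq \lambda_r(A)
\quad\text{for unit } \vec{x}\in\U,
\qquad
\langle B\vec{x},\vec{x}\rangle\geq \lambda_s(B)
\quad\text{for unit } \vec{x}\in\W.
\end{equation*}
The dimension formula used there gives
\begin{equation*}
\dim(\U\cap\W)\geq r+s-n\geq 1.
\end{equation*}
Choose a subspace $\V\subseteq \U\cap\W$ with $\dim\V=k:=r+s-n$. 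Every unit vector $\vec{x}\in\V$ then satisfies
\begin{equation*}
\langle (A+B)\vec{x},\vec{x}\rangle=\langle A\vec{x},\vec{x}\rangle+\langle B\vec{x},\vec{x}\rangle\geq \lambda_r(A)+\lambda_s(B).
\end{equation*}
The first part of Theorem \ref{T:max-min}, applied to $A+B\in\h_n$ and the $k$-dimensional subspace $\V$, yields
\begin{equation*}
\lambda_{k}(A+B)\geq \min_{\vec{x}\in\V,\ \|\vec{x}\|=1}\langle (A+B)\vec{x},\vec{x}\rangle\geq \lambda_r(A)+\lambda_s(B),
\end{equation*}
which is the claimed inequality.

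For the monotonicity statement, write $B=A+(B-A)$ with $B-A\geq 0$. Apply the inequality just proved to the pair $A$ and $B-A$, with $s=n$ and the given $r$. The hypothesis $r+n\geq n+1$ holds, and $r+s-n=r$, so
\begin{equation*}
\lambda_r(A)+\lambda_n(B-A)\leq \lambda_r(B).
\end{equation*}
Since $B-A$ is positive semidefinite, $\lambda_n(B-A)\geq 0$, and therefore $\lambda_r(A)\leq\lambda_r(B)$.

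The main point needing care is the orientation of the Courant--Fischer inequality. Theorem \ref{T:max-min} bounds $\lambda_k$ from \emph{below} by the minimum over any $k$-dimensional subspace. So we must build a subspace on which the quadratic form of $A+B$ is large, which is why we use the top eigenspaces $\U$ and $\W$ rather than the bottom ones. The remaining steps are a routine dimension count and the positivity of $\lambda_n(B-A)$.
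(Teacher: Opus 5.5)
Your proof is correct and follows the paper's argument essentially step for step. The paper also intersects the $r$- and $s$-dimensional subspaces supplied by Theorem~\ref{T:max-min} for $A$ and $B$, which are exactly your top-eigenvector spans. It then takes an $(r+s-n)$-dimensional subspace of the intersection, applies the lower bound in Theorem~\ref{T:max-min} to $A+B$, and derives monotonicity by taking $s=n$ for the pair $A$ and $B-A\geq 0$.
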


\begin{proof}
The Courant--Fischer theorem (Theorem \ref{T:max-min}) provides subspaces $\V_0,\W_0 \subset \C^n$ of dimension $r$ and $s$, respectively, such that 
\begin{equation*}
\lambda_r(A) = \min_{\substack{\vec{x} \in \V_0 \\ \|\vec{x}\|=1}}  \langle A\vec{x},\vec{x} \rangle,
\quad\text{and}\quad
\lambda_s(B) = \min_{\substack{\vec{x} \in \W_0 \\ \|\vec{x}\|=1}}  \langle B \vec{x},\vec{x} \rangle.
\end{equation*}
Since
\begin{equation*}
\dim (\V_0 \cap \W_0) 
=
\dim (\V_0) + \dim (\W_0) - \dim (\V_0 + \W_0)
\geq r+s-n,
\end{equation*}
there is a subspace $\U_0 \subset \V_0 \cap \W_0$ of dimension $r+s-n$. 
Then the Courant--Fischer theorem yields
\begin{align*}
\lambda_{r+s-n}(A+B) 
&\geq \min_{\substack{\vec{x} \in \U_0 \\ \|\vec{x}\|=1}}  \langle (A+B)\vec{x},\vec{x} \rangle\\
&= \min_{\substack{\vec{x} \in \U_0 \\ \|\vec{x}\|=1}}  \big( \langle A\vec{x},\vec{x} \rangle + \langle B\vec{x},\vec{x} \rangle \big)\\
&\geq \min_{\substack{\vec{x} \in \U_0 \\ \|\vec{x}\|=1}}   \langle A\vec{x},\vec{x} \rangle  
+ \min_{\substack{\vec{x} \in \U_0 \\ \|\vec{x}\|=1}}  \langle B\vec{x},\vec{x} \rangle \\
&\geq \min_{\substack{\vec{x} \in \V_0 \\ \|\vec{x}\|=1}} \,\,  \langle A\vec{x},\vec{x} \rangle  + \min_{\substack{\vec{x} \in \W_0 \\ \|\vec{x}\|=1}}   \langle B\vec{x},\vec{x} \rangle \\
&= \lambda_r(A) + \lambda_s(B).
\end{align*}
In particular, 
\begin{equation*}
\lambda_r(X) + \lambda_n(Y) \leq \lambda_{r}(X+Y)
\end{equation*}
for any $X,Y \in \h_n$ and $1 \leq r \leq n$. Given $A,B \in \h_n$ with $A \leq B$, let $X=A$ and $Y=B-A \geq 0$ to conclude $\lambda_r(A) \leq \lambda_{r}(B)$. 
\end{proof}

\section{The Ky Fan majorization theorem} \label{S:UIN-fan}
There are various inequalities due to Ky Fan that involve eigenvalues, norms, or determinants. The following one relates the eigenvalues of $A$, $B$, and $A+B$. Recall that we always arrange the eigenvalues of Hermitian matrices in decreasing order.

\begin{theorem}[Ky Fan \cite{MR34519, MR45952}] \label{T:UIN-fan}
If $A,B \in \h_n$, then
\begin{equation*}
\boldsymbol{\lambda}(A+B) \prec \boldsymbol{\lambda}(A) + \boldsymbol{\lambda}(B).
\end{equation*}
\end{theorem}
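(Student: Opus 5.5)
The plan is to prove Ky Fan's maximum principle and read the majorization off from it. For $A \in \h_n$ and $1 \leq k \leq n$ we want
\begin{equation*}
\sum_{i=1}^{k} \lambda_i(A) = \max\Big\{ \sum_{i=1}^{k} \langle A\vec{u}_i,\vec{u}_i\rangle : \vec{u}_1,\ldots,\vec{u}_k \text{ orthonormal in } \C^n \Big\}.
\end{equation*}
If we have this, then choosing orthonormal $\vec{u}_1,\ldots,\vec{u}_k$ that attain the maximum for $A+B$ gives
\begin{equation*}
\sum_{i=1}^{k}\lambda_i(A+B) = \sum_{i=1}^{k}\langle A\vec{u}_i,\vec{u}_i\rangle + \sum_{i=1}^{k}\langle B\vec{u}_i,\vec{u}_i\rangle \leq \sum_{i=1}^{k}\lambda_i(A)+\sum_{i=1}^{k}\lambda_i(B).
\end{equation*}
Both $\boldsymbol{\lambda}(A+B)$ and $\boldsymbol{\lambda}(A)+\boldsymbol{\lambda}(B)$ are already in decreasing order, because each summand is. So these are exactly the partial-sum inequalities that majorization requires. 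For $k=n$ we get equality, since $\tr(A+B)=\tr A+\tr B$ and the trace is the sum of the eigenvalues.

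For the maximum principle, take orthonormal eigenvectors $\vec{x}_1,\ldots,\vec{x}_n$ of $A$. Setting $\vec{u}_i=\vec{x}_i$ shows that the value $\sum_{i\leq k}\lambda_i(A)$ is attained. For the upper bound, let $U=[\vec{u}_1~\cdots~\vec{u}_k]\in\M_{n\times k}$, so that $U^*U=I_k$, and consider $B_0=U^*AU\in\h_k$. Then $\sum_i\langle A\vec{u}_i,\vec{u}_i\rangle=\tr B_0=\sum_{j=1}^{k}\lambda_j(B_0)$.

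Next extend $U$ to a unitary $W=[U~U']$. The matrix $W^*AW$ has the same eigenvalues as $A$, and $B_0$ is a principal submatrix of it. Cauchy's interlacing theorem (Theorem~\ref{T:UIN-cauchy}) gives $\lambda_j(B_0)\leq\lambda_j(A)$ for each $j$, and summing over $j\leq k$ gives the bound.

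As a fallback, we can argue directly. Write $p_j=\sum_{i=1}^k|\langle \vec{u}_i,\vec{x}_j\rangle|^2$. Bessel's inequality gives $0\leq p_j\leq 1$, and we have $\sum_j p_j=k$. Then $\sum_i\langle A\vec{u}_i,\vec{u}_i\rangle=\sum_j p_j\lambda_j$, and this is at most $\sum_{j\leq k}\lambda_j$ by the same rearrangement estimate used in the $(\Leftarrow)$ direction of the Hardy--Littlewood--P\'olya proof.

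The main step to check is the upper bound in the maximum principle, that is, identifying $U^*AU$ as a principal submatrix of a matrix unitarily similar to $A$. Everything else is routine bookkeeping.
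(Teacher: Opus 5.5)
Your proposal is correct and follows essentially the same route as the paper. The paper also proves the maximum principle $\sum_{j=1}^{k}\lambda_j(X)=\max_{U^*U=I_k}\tr(U^*XU)$ by viewing $U^*XU$ as a principal submatrix of $V^*XV$ with $V=[U~U']$ unitary and applying Cauchy interlacing, attains the maximum with the first $k$ eigenvectors, and concludes by subadditivity of the maximum together with $\tr(A+B)=\tr A+\tr B$. Your Bessel-inequality fallback is a valid interlacing-free alternative (it is the $(\Leftarrow)$ computation from Hardy--Littlewood--P\'olya), and your remark that $\boldsymbol{\lambda}(A)+\boldsymbol{\lambda}(B)$ is already in decreasing order makes explicit a point the paper leaves implicit.
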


\begin{proof}
The proof is based on an identity that is of independent interest. We show that for each $X \in \h_n$ 
and each fixed $1 \leq k \leq n$,
\begin{equation}\label{E:UIN-fan1}
\sum_{j=1}^{k} \lambda_j(X) = \max_{U^*U=I_k} \tr (U^*XU),
\end{equation}
in which $U \in \M_{n\times k}$. Let $V = [U~U'] \in \M_n$ be unitary; if $k = n$, then $U'$ is omitted and $V = U$.
Then $U^*XU$ is a principal submatrix of $V^*XV$, so the Cauchy interlacing theorem (Theorem \ref{T:UIN-cauchy}) ensures that
\begin{equation*}
\sum_{j=1}^{k} \lambda_j(X) =
\sum_{j=1}^{k} \lambda_j(V^*XV) \geq
\sum_{j=1}^{k} \lambda_j(U^*XU) =
\tr (U^*XU),
\end{equation*}
in which $U^*XU \in \M_k$. Conversely, the spectral theorem provides a unitary $W \in \M_n$ such that
\begin{equation*}
W^*XW = \diag \big(\lambda_1(X),\lambda_2(X),\ldots,\lambda_n(X)\big).
\end{equation*}
Let $U_0 \in \M_{n \times k}$ be the $n \times k$ matrix consisting of the first $k$ columns of $W$. Then $U_0^*U_0=I_k$ and
\begin{equation*}
U_0^*XU_0 = \diag \big(\lambda_1(X),\lambda_2(X),\ldots,\lambda_k(X)\big).
\end{equation*}
Thus,
\begin{equation*}
\tr (U_0^*XU_0) = \sum_{j=1}^{k} \lambda_j(X),
\end{equation*}
which establishes \eqref{E:UIN-fan1}.

For $1 \leq k \leq n$, \eqref{E:UIN-fan1} ensures that
\begin{align*}
\sum_{j=1}^{k} \lambda_j(A+B) 
&= \max_{U^*U=I_k} \tr (U^*(A+B)U) \\
&= \max_{U^*U=I_k} \tr (U^*AU+U^*BU) \\
&= \max_{U^*U=I_k} \big(\tr (U^*AU)+\tr (U^*BU)\big) \\
&\leq \max_{U^*U=I_k} \tr (U^*AU)+ \max_{U^*U=I_k}\tr (U^*BU) \\
&= \sum_{j=1}^{k} \lambda_j(A) + \sum_{j=1}^{k} \lambda_j(B).
\end{align*}
If $k=n$, then
\begin{equation*}
\sum_{j=1}^{n} \lambda_j(A+B)
= \tr (A+B)
= \tr (A) + \tr (B)
= \sum_{j=1}^{n} \lambda_j(A)  + \sum_{j=1}^{n} \lambda_j(B),
\end{equation*}
from which we conclude that $\lambda(A+B) \prec \lambda(A) + \lambda(B)$.
\end{proof}

\section{The weak subadditivity of singular values} \label{S:UIN-weak-aubadd}

We want a version of Ky Fan's theorem \ref{T:UIN-fan} 
for the singular values of an arbitrary matrix. 
The following crucial result shows that the singular values are weakly subadditive.

\begin{theorem} \label{T:UIN-weak-aubadd}
If $A,B \in \M_n$, then
\begin{equation*}
s(A+B) \prec_{\w} s(A) + s(B).
\end{equation*}
\end{theorem}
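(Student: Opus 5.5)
The plan is to reduce the statement to Ky Fan's theorem (Theorem \ref{T:UIN-fan}) through the Hermitian dilation. For $X \in \M_n$ put
\begin{equation*}
\widetilde{X} = \begin{bmatrix} 0 & X \\ X^* & 0 \end{bmatrix} \in \h_{2n}.
\end{equation*}
Write the singular value decomposition \eqref{E:UIN-A=udiagV-0} as $X = U\Sigma V$ with $\Sigma = \diag \vec{s}(X)$. Conjugating by the unitary $\diag(U, V^*)$ turns $\widetilde{X}$ into the block matrix with off-diagonal blocks $\Sigma$. That matrix is a direct sum of $2\times 2$ blocks $\left[\begin{smallmatrix} 0 & s_j \\ s_j & 0\end{smallmatrix}\right]$, which have eigenvalues $\pm s_j$. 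Hence the eigenvalues of $\widetilde{X}$ in decreasing order are
\begin{equation*}
s_1(X) \geq \cdots \geq s_n(X) \geq -s_n(X) \geq \cdots \geq -s_1(X).
\end{equation*}
In particular, $\lambda_j(\widetilde{X}) = s_j(X)$ for $1 \leq j \leq n$.

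Next I would use linearity: $\widetilde{A+B} = \widetilde{A} + \widetilde{B}$. Theorem \ref{T:UIN-fan} applied to $\widetilde{A}, \widetilde{B} \in \h_{2n}$ gives $\boldsymbol{\lambda}(\widetilde{A}+\widetilde{B}) \prec \boldsymbol{\lambda}(\widetilde{A}) + \boldsymbol{\lambda}(\widetilde{B})$. Both sides are already listed in decreasing order, since the sum of two decreasing sequences is decreasing. So the partial-sum inequalities for $1 \leq k \leq n$ read directly
\begin{equation*}
\sum_{j=1}^{k} s_j(A+B) \leq \sum_{j=1}^{k} \big(s_j(A) + s_j(B)\big).
\end{equation*}
The right side consists of the partial sums of the decreasing rearrangement of $\vec{s}(A)+\vec{s}(B)$, because that vector is itself decreasing. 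This is exactly $\vec{s}(A+B) \prec_{\w} \vec{s}(A)+\vec{s}(B)$. The total-sum equality from Ky Fan concerns all $2n$ entries and is trivially $0=0$, so only weak majorization survives; this is the expected outcome.

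The only real work is the eigenvalue computation for $\widetilde{X}$, and in particular checking that the ordering of the eigenvalues matches the ordering of the singular values. The block-diagonalization above handles this. A more self-contained alternative would be to use \eqref{E:UIN-fan1} directly: for $\widetilde{X}$, restricting to isometries built from pairs $(u_i, v_i)$ gives $\sum_{j\le k} s_j(X) = \max \operatorname{Re} \tr (U^* X V)$ over $U, V \in \M_{n\times k}$ with orthonormal columns. The triangle inequality for this maximum then gives the result immediately. I would keep the dilation argument as the main route and mention this variational formula only if a cleaner presentation is needed.
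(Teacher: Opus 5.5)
Your proof is correct and follows essentially the same route as the paper: you build a Hermitian dilation of $X$ whose top $n$ eigenvalues are $s_1(X),\ldots,s_n(X)$, apply Ky Fan's theorem to the dilations of $A$ and $B$, and read off the first $n$ partial sums. The differences are only cosmetic. The paper uses $\Gamma(X)=\left[\begin{smallmatrix} 0 & X^* \\ X & 0\end{smallmatrix}\right]$ and writes down an explicit unitary that diagonalizes it, whereas you use the transposed block form and diagonalize it by a block-diagonal unitary conjugation followed by splitting into $2\times 2$ blocks.
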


\begin{proof}
To exploit Ky Fan's theorem (Theorem \ref{T:UIN-fan}), 
we relate the singular values of $X \in \M_n$ to 
the eigenvalues of 
\begin{equation}\label{E:def-gamma-x}
\Gamma(X) :=
\begin{bmatrix}
0 & X^* \\
X & 0
\end{bmatrix} \in \h_{2n}.  
\end{equation}
The singular value decomposition provides unitaries $U,V \in \M_n$ such that
\begin{equation*}
U^*XV = \diag (s_1(X),s_2(X),\ldots,s_n(X)).
\end{equation*}
Let
\begin{equation*}
W =
\frac{1}{\sqrt{2}}
\begin{bmatrix}
V & V \\
U & -U
\end{bmatrix} \in \M_{2n}
\end{equation*}
and verify that $W$ is unitary and
\begin{equation*}
W^*\Gamma(X)W = \diag (s_1(X),s_2(X),\ldots,s_n(X),-s_1(X),-s_2(X),\ldots,-s_n(X)).
\end{equation*}
Writing eigenvalues in decreasing order as always, we have
\begin{equation*}
\lambda\big(\Gamma(X)\big) = \big(s_1(X),s_2(X),\ldots,s_n(X),-s_n(X),-s_{n-1}(X),\ldots,-s_1(X)\big).
\end{equation*}
In particular, for $1 \leq j \leq n$, we have
\begin{equation*}
\lambda_j\big(\Gamma(X)\big) = s_j(X).
\end{equation*}
For $1 \leq k \leq n$, Ky Fan's theorem \ref{T:UIN-fan} implies that
\begin{align*}
\sum_{j=1}^{k} s_j(A+B)
&=  \sum_{j=1}^{k} \lambda_j\big(\Gamma(A+B)\big)\\
&= \sum_{j=1}^{k} \lambda_j\big(\Gamma(A)+\Gamma(B)\big)\\
&\leq \sum_{j=1}^{k} \lambda_j\big(\Gamma(A)\big) + \sum_{j=1}^{k} \lambda_j\big(\Gamma(B)\big)\\
&= \sum_{j=1}^{k} s_j(A) + \sum_{j=1}^{k} s_j(B)\\
&= \sum_{j=1}^{k} (s_j(A) + s_j(B)). \qedhere
\end{align*}
\end{proof}

\section{The von Neumann theorem} \label{S:UIN-on-neumann}
Suppose that $\|\cdot\|$ is a unitarily invariant norm on $\M_n$ and define $\Phi: \R^n \to \R_{+}$ by
\begin{equation*}
\Phi(\vec{x}) := \|\diag (x_1,x_2,\ldots,x_n)\|.
\end{equation*}
Then $\Phi$ is a monotone symmetric norm on $\R^n$; that is, it is a symmetric gauge function. In particular, 
\begin{equation*}
\Phi(\vec{x}) = \Phi(|\vec{x}|\!\!\downarrow)
\end{equation*}
for all $\vec{x} \in \R^n$.  
This observation and the singular value decomposition suggest that we start with a function $\phi$, initially defined on $\R^n_{+}\!\!\downarrow$, extend it to $\R^n$ via
\begin{equation}\label{E:UIN-extension-phi}
\Phi(\vec{x}) = \phi(|\vec{x}|\!\!\downarrow), \qquad \vec{x} \in \R^n
\end{equation}
so that monotonicity and permutation invariance are respected,
and then consider when its extension is a symmetric gauge function. 

In 1937, John von Neumann \cite{vonNeumann, MR157874} established that every unitarily invariant norm on $\M_n$ arises from a symmetric gauge function applied to the singular values of a matrix and that any symmetric gauge function provides a unitarily invariant norm on $\M_n$. See also Lemma \ref{L:UIN-Further-examples}.

\begin{theorem}[von Neumann \cite{vonNeumann, MR157874}] \label{T:UIN-von-Neumann}
Let $\phi: \R^n_{+}\!\!\downarrow \to \R_{+}$ be given and extend it to $\R^n$ via \eqref{E:UIN-extension-phi}. Define $\|\cdot\|_{\phi}$ on $\M_n$ by
\begin{equation}\label{E:UIN-von-neumann-def}
\|A\|_{\phi} := \phi\big( s_1(A),s_2(A),\ldots,s_n(A) \big).
\end{equation}
Then $\|\cdot\|_{\phi}$ is a unitarily invariant norm on $\M_n$ if and only if $\Phi$ is a symmetric gauge function on $\R^n$.
\end{theorem}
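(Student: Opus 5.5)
We prove the two directions separately; the substantive one is that a symmetric gauge function yields a norm.

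\emph{($\Rightarrow$)} Suppose $\|\cdot\|_{\phi}$ is a unitarily invariant norm on $\M_n$, and for $\vec{x}\in\R^n$ consider $\Psi(\vec{x}) := \|\diag(x_1,\ldots,x_n)\|_{\phi}$. Since $\Psi$ is the restriction of a norm to a linear subspace, it is a norm on $\R^n$.

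We next identify $\Psi$ with $\Phi$. There is a diagonal unitary $D$ with $D\,\diag(\vec{x}) = \diag(|\vec{x}|)$, and a permutation matrix $P$ with $P\,\diag(|\vec{x}|)\,P^{\T} = \diag(|\vec{x}|\!\!\downarrow)$. Hence the singular values of $\diag(\vec{x})$ are exactly $|\vec{x}|\!\!\downarrow$, and by \eqref{E:UIN-von-neumann-def} we get $\Psi(\vec{x}) = \phi(|\vec{x}|\!\!\downarrow) = \Phi(\vec{x})$. Thus $\Phi$ is a norm.

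The extension formula \eqref{E:UIN-extension-phi} shows directly that $\Phi$ is absolute and symmetric. By Lemma \ref{L:UIN-monotone=absolute}, absoluteness gives monotonicity, so $\Phi$ is a symmetric gauge function.

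\emph{($\Leftarrow$)} Suppose $\Phi$ is a symmetric gauge function. Then $\|A\|_{\phi} = \Phi(\vec{s}(A))$.

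Unitary invariance is immediate, since $\vec{s}(UAV) = \vec{s}(A)$: indeed $(UAV)^*(UAV) = V^*A^*AV$ is unitarily similar to $A^*A$.

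Positivity holds because $\vec{s}(A) = \vec{0}$ exactly when $A = 0$, using $\|A\|_{\F}^2 = \sum_k s_k(A)^2$ from \eqref{E:UIN-spectral-norm-s2} together with the fact that $\Phi$ is a norm.

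Homogeneity holds because $\vec{s}(cA) = |c|\,\vec{s}(A)$.

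The triangle inequality is the key step. Theorem \ref{T:UIN-weak-aubadd} gives $\vec{s}(A+B) \prec_{\w} \vec{s}(A) + \vec{s}(B)$, and both vectors lie in $\R^n_+$. The monotonicity theorem (Theorem \ref{T:UIN-monotonicity}) therefore applies and yields
\begin{equation*}
\Phi(\vec{s}(A+B)) \leq \Phi(\vec{s}(A)+\vec{s}(B)) \leq \Phi(\vec{s}(A)) + \Phi(\vec{s}(B)),
\end{equation*}
where the last step is the triangle inequality for $\Phi$ on $\R^n$.

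The main obstacle is exactly this triangle inequality. It is precisely what the chain of results in Figure \ref{fig:theorems-needed} was built to handle, so at this stage it reduces to citing Theorems \ref{T:UIN-weak-aubadd} and \ref{T:UIN-monotonicity}. The only points needing care are that monotonicity requires nonnegative vectors, which holds here, and the bookkeeping that $\phi$ evaluated on decreasing nonnegative vectors agrees with $\Phi$.
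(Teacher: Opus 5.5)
Your proposal is correct and follows the paper's own argument. For ($\Rightarrow$) you restrict $\|\cdot\|_{\phi}$ to diagonal matrices, use $\vec{s}(\diag \vec{x}) = |\vec{x}|\!\!\downarrow$, and invoke Lemma \ref{L:UIN-monotone=absolute}; for ($\Leftarrow$) you obtain the triangle inequality from Theorem \ref{T:UIN-weak-aubadd}, Theorem \ref{T:UIN-monotonicity}, and the triangle inequality for $\Phi$, and you also spell out positivity, homogeneity, and the singular values of $\diag \vec{x}$, which the paper treats as evident.
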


\begin{proof}
Suppose that $\|\cdot\|_{\phi}$ is a unitarily invariant norm on $\M_n$.  For each $\vec{x} \in \R^n$, we have $\vec{s}(\diag \vec{x}) = | \vec{x} |\!\!\downarrow$, so \eqref{E:UIN-von-neumann-def} ensures that
\begin{equation*}
    \Phi(\vec{x}) = \phi(|\vec{x}|\!\!\downarrow) = \| \diag \vec{x} \|_{\phi}.
\end{equation*}
Since the map $\vec{x} \mapsto \diag \vec{x}$ is linear and injective, the restriction of
$\| \cdot \|_{\phi}$ to the diagonal matrices shows that $\Phi$ is a norm on $\R^n$.
Moreover, the definition $\Phi(\vec{x}) = \phi(|\vec{x}|\!\!\downarrow)$ ensures that
$\Phi$ is absolute and permutation invariant.  By Lemma \ref{L:UIN-monotone=absolute},
every absolute norm is monotone.  Therefore, $\Phi$ is a symmetric gauge function.

Now suppose that $\Phi$ is a symmetric gauge function and define $\|\cdot\|_{\phi}$ by \eqref{E:UIN-von-neumann-def}. Then the triangle inequality is the only property of a norm that is not clearly fulfilled. To verify the triangle inequality, let $A,B \in \M_n$. Then the weak subadditivity theorem (Theorem \ref{T:UIN-weak-aubadd}) ensures that
\begin{equation*}
\vec{s}(A+B) \prec_{\w} \vec{s}(A) + \vec{s}(B).
\end{equation*}
Hence the monotonicity theorem (Theorem \ref{T:UIN-monotonicity}) yields
\begin{equation*}
\Phi\big(\vec{s}(A+B)\big) \leq \Phi\big(\vec{s}(A) + \vec{s}(B)\big).
\end{equation*}
Since $\Phi$ is a norm,
\begin{equation*}
\Phi\big(\vec{s}(A) + \vec{s}(B)\big) \leq \Phi\big(\vec{s}(A)\big) + \Phi\big(\vec{s}(B)\big).
\end{equation*}
According to the definition \eqref{E:UIN-von-neumann-def} of $\|\cdot\|_{\phi}$, we have
\begin{equation*}
\Phi\big(\vec{s}(A+B)\big) = \|A+B\|_{\phi},
\quad
\Phi\big(\vec{s}(A)\big) = \|A\|_{\phi},
\quad\text{and}\quad
\Phi\big(\vec{s}(B)\big) = \|B\|_{\phi}.
\end{equation*}
Putting this all together, we find that
\begin{equation*}
\|A+B\|_{\phi} \leq \|A\|_{\phi} + \|B\|_{\phi}.
\end{equation*}
Finally, note that if $B=UAV$, in which $U$ and $V$ are unitary, then $\vec{s}(A)=\vec{s}(B) \in \R^n_{+}\!\!\downarrow$ and thus, again by \eqref{E:UIN-von-neumann-def}, $\|A\|_{\phi} = \|B\|_{\phi}$. Therefore, $\|\cdot\|_{\phi}$ is a unitarily invariant norm.
\end{proof}

\section{Further examples} \label{S:UIN-further-examples}
A major application of Theorem \ref{T:UIN-von-Neumann} is to take some symmetric gauge functions on $\R^n$ and use them to build the corresponding unitarily invariant norms on $\M_n$. We provide some examples in this section. In order to facilitate verifying that $\Phi$ is a symmetric gauge function, we provide the following useful lemma.

\begin{lemma} \label{L:UIN-Further-examples}
Let $\phi: \R^n_{+}\!\!\downarrow\, \to \R_{+}$ and define its extension to $\R^n$ by \eqref{E:UIN-extension-phi}. Then $\Phi$ is a symmetric gauge function on $\R^n$ if and only if the following hold.
\begin{enumerate}[(a)]\addtolength{\itemsep}{3pt}
\item $\phi(\vec{x}) \geq 0$ for all $\vec{x} \in \R^n_{+}\!\!\downarrow$.
\item $\phi(\vec{x}) = 0$ if and only if $\vec{x}=0$. 
\item $\phi(a \vec{x}) = a\phi(\vec{x})$ for all $\vec{x} \in \R^n_{+}\!\!\downarrow$ and all $a \in \R_{+}$.
\item $\phi(\vec{x}+\vec{y}) \leq \phi(\vec{x}) + \phi(\vec{y})$ for all $\vec{x},\vec{y} \in \R^n_{+}\!\!\downarrow$.
\item $\phi(\vec{x}) \leq \phi(\vec{y})$ whenever $\vec{x},\vec{y} \in \R^n_{+}\!\!\downarrow$ and $\vec{x} \prec_{\w} \vec{y}$.
\end{enumerate}
\end{lemma}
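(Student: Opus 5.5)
The forward direction is routine. Suppose $\Phi$ is a symmetric gauge function. Then (a)–(c) are the norm axioms restricted to $\R^n_{+}\!\!\downarrow$, where $\Phi=\phi$.

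For (d), take $\vec{x},\vec{y}\in\R^n_{+}\!\!\downarrow$. Their sum is again in $\R^n_{+}\!\!\downarrow$, so $\phi(\vec{x}+\vec{y})=\Phi(\vec{x}+\vec{y})\le\Phi(\vec{x})+\Phi(\vec{y})$. For (e), apply the monotonicity theorem (Theorem \ref{T:UIN-monotonicity}) to nonnegative vectors with $\vec{x}\prec_{\w}\vec{y}$.

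For the converse, assume (a)–(e) and define $\Phi(\vec{x})=\phi(|\vec{x}|\!\!\downarrow)$. Symmetry and absoluteness are immediate, because $|\vec{x}_\sigma|\!\!\downarrow=|\vec{x}|\!\!\downarrow$ and $\big||\vec{x}|\big|\!\!\downarrow=|\vec{x}|\!\!\downarrow$. Once $\Phi$ is shown to be a norm, monotonicity follows from Lemma \ref{L:UIN-monotone=absolute}. Positivity and definiteness come from (a) and (b). Homogeneity comes from (c), since $|a\vec{x}|\!\!\downarrow=|a|\,|\vec{x}|\!\!\downarrow$.

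The only real work is the triangle inequality for arbitrary $\vec{x},\vec{y}\in\R^n$. This is the main obstacle, because (d) only controls sums of vectors that are already nonnegative and decreasing. The idea is to use (e) to reduce to that case. The key claim is
\begin{equation*}
|\vec{x}+\vec{y}|\!\!\downarrow \;\prec_{\w}\; |\vec{x}|\!\!\downarrow+|\vec{y}|\!\!\downarrow .
\end{equation*}
To prove it, first note $|\vec{x}+\vec{y}|\le_{\e}|\vec{x}|+|\vec{y}|$ entrywise. Fix $k$ and choose the indices $i_1,\ldots,i_k$ carrying the $k$ largest entries of $|\vec{x}+\vec{y}|$. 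The sum of those entries is at most $\sum_{j}|x_{i_j}|+\sum_j|y_{i_j}|$. In turn this is at most $\sum_{i=1}^k|\vec{x}|_{[i]}+\sum_{i=1}^k|\vec{y}|_{[i]}$, because any $k$ entries sum to no more than the $k$ largest. Finally, $|\vec{x}|\!\!\downarrow+|\vec{y}|\!\!\downarrow$ is already decreasing, so its $k$th partial sum in decreasing order is exactly that right-hand side. This proves the claim.

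Both sides of the claim lie in $\R^n_{+}\!\!\downarrow$, so applying (e) and then (d) gives
\begin{equation*}
\Phi(\vec{x}+\vec{y})=\phi(|\vec{x}+\vec{y}|\!\!\downarrow)\le\phi(|\vec{x}|\!\!\downarrow+|\vec{y}|\!\!\downarrow)\le\phi(|\vec{x}|\!\!\downarrow)+\phi(|\vec{y}|\!\!\downarrow)=\Phi(\vec{x})+\Phi(\vec{y}).
\end{equation*}
Hence $\Phi$ is a norm, and by the remarks above it is absolute, symmetric and monotone, so it is a symmetric gauge function.
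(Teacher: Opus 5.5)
Your proposal is correct and follows the paper's proof essentially step for step: the forward direction via Theorem~\ref{T:UIN-monotonicity}, and the triangle inequality via $|\vec{x}+\vec{y}|\!\!\downarrow\, \prec_{\w} |\vec{x}|\!\!\downarrow+|\vec{y}|\!\!\downarrow$ followed by (e) and then (d). The only deviation is that you get monotonicity of $\Phi$ from Lemma~\ref{L:UIN-monotone=absolute} after the norm axioms are verified, whereas the paper derives it directly from (e) by noting that $|\vec{x}|\leq_{\e}|\vec{y}|$ implies $|\vec{x}|\!\!\downarrow\,\prec_{\w}|\vec{y}|\!\!\downarrow$; either route works.
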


\begin{proof}
$(\Rightarrow)$ Suppose that $\Phi$ is a symmetric gauge function. Then the first four properties are part of the definition of this concept, and the fifth is the content of the monotonicity theorem (Theorem \ref{T:UIN-monotonicity}). Note that $\R^n_{+}\!\!\downarrow$ is closed under addition and multiplication by scalars $a \in \R_{+}$.

\medskip\noindent$(\Leftarrow)$
Suppose that $\phi$ has the five properties above. The definition
$\Phi(\vec{x}) = \phi(|\vec{x}|\!\!\downarrow)$ ensures that $\Phi$ is absolute
and permutation invariant.  We first verify monotonicity.  Suppose that $| \vec{x} | \leq_{\e} | \vec{y}|$.  Let $J$ denote the set of indices corresponding to the $k$ largest entries of $|\vec{x}|$.  Then
\begin{equation*}
    \sum_{j=1}^k | \vec{x} |_{[j]} = \sum_{j \in J} |x_j|
    \leq \sum_{j \in J} |y_j| \leq \sum_{j=1}^k |\vec{y}|_{[j]}.
\end{equation*}
Thus, $|\vec{x}|\!\!\downarrow \prec_{\w} | \vec{y} | \!\! \downarrow$, so (e) gives
\begin{equation*}
    \Phi(\vec{x}) = \phi( | \vec{x} | \!\! \downarrow) 
    \leq \phi( | \vec{y} | \!\! \downarrow) = \Phi( \vec{y}).
\end{equation*}
Therefore, $\Phi$ is monotone.

For all $\vec{x} \in \R^n$, (a) implies that
\begin{equation*}
\Phi(\vec{x}) = \phi(|\vec{x}|\!\!\downarrow) \geq 0
\end{equation*}
and (b) ensures that
\begin{equation*}
\Phi(\vec{x}) = 0 
\quad\iff\quad
\phi(|\vec{x}|\!\!\downarrow) =0 
\quad\iff\quad
|\vec{x}|\!\!\downarrow \,\, =0
\quad\iff\quad
\vec{x} =0.
\end{equation*}
By (c), for each $a \in \R$ and each $\vec{x} \in \R^n$, we have
\begin{equation*}
\Phi(a\vec{x}) = \phi(|a\vec{x}|\!\!\downarrow) = \phi(|a| \, |\vec{x}|\!\!\downarrow) = |a| \phi(|\vec{x}|\!\!\downarrow) = |a| \Phi(\vec{x}).
\end{equation*}
Now assume that $\vec{x}, \vec{y} \in \R^n$ and note that
\begin{equation}\label{E:UIN-Further-examples}
|\vec{x}+\vec{y}|\!\!\downarrow\,\, \prec_{\w} |\vec{x}|\!\!\downarrow+|\vec{y}|\!\!\downarrow.  
\end{equation}
Indeed, suppose that $\sigma,\tau \in \mathfrak{S}_n$ satisfy
\begin{equation*}
|x_{\sigma(1)}| \geq |x_{\sigma(2)}| \geq \cdots \geq |x_{\sigma(n)}|,
\quad\text{and}\quad
|y_{\tau(1)}| \geq |y_{\tau(2)}| \geq \cdots \geq |y_{\tau(n)}|.
\end{equation*}
Then, for each $1 \leq k \leq n$ and for any $\rho \in \mathfrak{S}_n$, 
\begin{align*}
\sum_{j=1}^{k} |x_{\rho(j)}+y_{\rho(j)}| &\leq \sum_{j=1}^{k} |x_{\rho(j)}| + \sum_{j=1}^{k} |y_{\rho(j)}|\\
&\leq \sum_{j=1}^{k} |x_{\sigma(j)}| + \sum_{j=1}^{k} |y_{\tau(j)}|,
\end{align*}
which ensures the validity of \eqref{E:UIN-Further-examples}. Therefore, (d), (e), and \eqref{E:UIN-Further-examples} imply that for any $\vec{x},\vec{y} \in \R^n$, 
\begin{align*}
\Phi(\vec{x}+\vec{y}) &= \phi(|\vec{x}+\vec{y}|\!\!\downarrow) \\
&\leq \phi(|\vec{x}|\!\!\downarrow+|\vec{y}|\!\!\downarrow)\\
&\leq \phi(|\vec{x}|\!\!\downarrow) + \phi\big(|\vec{y}|\!\!\downarrow)\\
&= \Phi(\vec{x}) + \Phi(\vec{y}).
\end{align*}
Therefore, $\Phi$ is a symmetric gauge function on $\R^n$.
\end{proof}

Here are some examples of unitarily invariant norms on $\M_n$.

\begin{example}[The Schatten $p$-Norm] 
Let $1 \leq p < \infty$ and define
\begin{equation*}
\Phi_p(\vec{x}) := \bigg( \sum_{k=1}^{n} |x_k|^p \bigg)^{1/p}
\end{equation*}
for all $\vec{x} \in \R^n$.
For $p=\infty$, define
\begin{equation*}
\Phi_\infty(\vec{x}) := \max\{ |x_k| : 1 \leq k \leq n\}.
\end{equation*}
One can verify that each of these is a symmetric gauge function on $\R^n$.  
Thus, von Neumann's theorem (Theorem \ref{T:UIN-von-Neumann}) implies that
\begin{equation}\label{E:UIN-spectral-norm-s22}
\|A\|_{\mathcal{S}_p} := \left( \sum_{k=1}^{n} |s_k(A)|^p \right)^{1/p},
\end{equation}
for $1 \leq p < \infty$, and
\begin{equation}\label{E:UIN-spectral-norm-s11}
\|A\|_{\mathcal{S}_\infty} := \max\{ |s_k(A)| : 1 \leq k \leq n\} = s_1(A),
\end{equation}
are unitarily invariant norms on $\M_n$. These are the \emph{Schatten $p$-norms}.
\end{example}

\begin{example}[The Ky Fan $k$-Norms] 
Fix $1 \leq k \leq n$ and define
\begin{equation*}
\Phi_{(k)}(\vec{x}) := \max\bigg\{ \sum_{j=1}^{k} |x_{\sigma(j)}| : \sigma \in \mathfrak{S}_n \bigg\}
\end{equation*}
for $\vec{x} \in \R^n$.  Equivalently, we could define
\begin{equation*}
\Phi_{(k)}(\vec{x}) := \max\bigg\{ \sum_{j=1}^{k} |x_{i_j}| : 1 \leq i_1 < i_2 < \cdots < i_k \leq n \bigg\}.
\end{equation*}
In this case, it is easier to use Lemma \ref{L:UIN-Further-examples} to show that $\Phi_{(k)}$ is a symmetric gauge function on $\R^n$. In fact, for each $\vec{x} \in \R^n_{+}\!\!\downarrow$,
\begin{equation}\label{E:UIN-def-phik}
\phi_{(k)}(\vec{x}) = \sum_{j=1}^{k} x_{j},
\end{equation}
so all five properties of $\phi$ are immediate. Thus, von Neumann's theorem \ref{T:UIN-von-Neumann}
ensures that for each $1 \leq k \leq n$
\begin{equation}\label{E:UIN-spectral-norm-s111}
\|A\|_{(k)} := \sum_{j=1}^{k} s_{j}(A)
\end{equation}
is a unitarily invariant norm on $\M_n$. These are the \emph{Ky Fan $k$-norms}.
\end{example}

By \eqref{E:UIN-spectral-norm-s1}, \eqref{E:UIN-spectral-norm-s11}, and \eqref{E:UIN-spectral-norm-s111}, we see that $\|\cdot\|_{\mathcal{S}_\infty}$ and $\|\cdot\|_{(1)}$ coincide with the spectral norm $\|\cdot\|_{\mathrm{op}}$. Similarly, \eqref{E:UIN-spectral-norm-s2} and \eqref{E:UIN-spectral-norm-s22} ensure that $\|\cdot\|_{\mathcal{S}_2}$ coincides with the Frobenius norm $\|\cdot\|_{\F}$. Also note that $\|\cdot\|_{\mathcal{S}_1} = \|\cdot\|_{(n)}$, which is the \emph{trace norm}. To distinguish it further, we denote it by $\|\cdot\|_{\mathcal{T}}$.
We summarize this discussion below:
\begin{align*}
\|\cdot\|_{\F}\  &= \|\cdot\|_{\mathcal{S}_2}\\
\|\cdot\|_{\mathrm{op}} &= \|\cdot\|_{\mathcal{S}_\infty} = \|\cdot\|_{(1)}\\
\|\cdot\|_{\mathcal{T}}\  &= \|\cdot\|_{\mathcal{S}_1} = \|\cdot\|_{(n)}.
\end{align*}

\begin{example}[The $\gamma$-Norm] 
Let $\boldsymbol{\gamma} = (\gamma_1,\gamma_2,\ldots,\gamma_n) \in \R^n_{+}\!\!\downarrow$ with $\gamma_1>0$.
For all $\vec{x} \in \R^n_{+}\!\!\downarrow$, define
\begin{equation*}
\phi_{\boldsymbol{\gamma}}(\vec{x}) := \sum_{j=1}^{n} \gamma_j x_j.
\end{equation*}
We can verify directly that $\phi_{\boldsymbol{\gamma}}$ satisfies the conditions of Lemma \ref{L:UIN-Further-examples} or use summation by parts to get
\begin{equation}\label{E:UIN-spectral-norm-s111-ga}
\phi_{\boldsymbol{\gamma}} = \sum_{j=1}^{n-1} (\gamma_j-\gamma_{j+1}) \phi_{(j)} + \gamma_n \phi_{(n)},
\end{equation}
in which $\phi_{(j)}$ is given by \eqref{E:UIN-def-phik}. The function $\phi_{\boldsymbol{\gamma}}$ is a nonnegative linear combination of functions $\phi_{(j)}$ satisfying the conditions of Lemma \ref{L:UIN-Further-examples}.  This representation gives (a), (c), (d), and (e); since $\gamma_1>0$, we see that (b) also holds. Thus, von Neumann's theorem \ref{T:UIN-von-Neumann} ensures that
\begin{equation}\label{E:UIN-spectral-norm-s111-g}
\|A\|_{\gamma} := \sum_{j=1}^{n} \gamma_j s_{j}(A)
\end{equation}
is a unitarily invariant norm on $\M_n$.    
\end{example}


\section{The Ky Fan domination principle} \label{S:UIN-further-applications}
In what follows, we employ the superscript ${}^{\diamond}$ to denote duality, as opposed to the more common ${}^*$.  This avoids confusion with the conjugate transpose of a matrix, which appears more often in this survey.

Although a dual norm can be defined for any normed vector space, we restrict ourselves to $\R^n$ and $\C^n$. We present our results for $\C^n$; they adapt readily to $\R^n$. Let $\|\cdot\|$ be a norm on $\C^n$. Fix $\vec{y} \in \C^n$, and consider the linear functional $\Lambda_{\vec{y}}\in (\C^n)^{\diamond}$ defined by
\begin{equation*}
\begin{array}{cccc}
\Lambda_{\vec{y}}: & (\C^n,\|\cdot\|) & \longrightarrow & \C\\
& \vec{x} & \longmapsto & \langle \vec{x},\vec{y} \rangle_{\C^n}.
\end{array}
\end{equation*}
The \emph{dual norm} of $\vec{y} \in \C^n$ is the operator norm of $\Lambda_{\vec{y}}$, denoted $\|\vec{y}\|^{\diamond}$:
\begin{equation}\label{E:UIN-def-norm-dual1}
\|\vec{y}\|^{\diamond} = \sup_{\substack{ \vec{x} \in \C^n \\ \vec{x} \neq \vec{0} }} \frac{|\langle \vec{x},\vec{y} \rangle|}{\|\vec{x}\|}.
\end{equation}
The dual norm is often denoted by $\|\vec{y}\|^{*}$, but to avoid confusion with the conjugate transpose of a matrix, we have adopted the diamond superscript. 

For all $\vec{x},\vec{y} \in \C^n$, the definition \eqref{E:UIN-def-norm-dual1} ensures that
\begin{equation}\label{E:UIN-def-norm-dual2}
|\langle \vec{x},\vec{y} \rangle| \leq \|\vec{x}\| \, \|\vec{y}\|^{\diamond}.
\end{equation}
The Hahn–-Banach theorem, together with the Riesz representation theorem, ensures that for
each $\vec{x}\in \C^n$, there is a $\vec{y}_{\vec{x}} \in \C^n$ with $\|\vec{y}_{\vec{x}}\|^{\diamond}=1$ such that
\begin{equation}\label{E:UIN-def-norm-dual3}
\|\vec{x}\| = \langle \vec{x},\vec{y}_{\vec{x}} \rangle.
\end{equation}
If we consider $\vec{x}$ as an element of $(\C^n)^{\diamond\diamond}$, then by \eqref{E:UIN-def-norm-dual2}, 
\begin{equation}\label{E:UIN-def-norm-dual5}
\|\vec{x}\|^{\diamond\diamond} = \sup_{\substack{\vec{y} \in \C^n \\ \vec{y} \neq \vec{0}}} \frac{|\langle \vec{y},\vec{x} \rangle|}{\|\vec{y}\|^{\diamond}} \leq \|\vec{x}\|.
\end{equation}
However, \eqref{E:UIN-def-norm-dual3} says that
\begin{equation*}
\|\vec{x}\|^{\diamond\diamond} 
= \sup_{\substack{\vec{y} \in \C^n \\ \vec{y} \neq \vec{0}} }\frac{|\langle \vec{y},\vec{x} \rangle|}{\|\vec{y}\|^{\diamond}} 
\geq \langle \vec{y}_{\vec{x}},\vec{x} \rangle =
\|\vec{x}\|.
\end{equation*}
For all $\vec{x} \in \C^n$, it follows that
\begin{equation}\label{E:UIN-def-norm-dual4}
\|\vec{x}\|^{\diamond\diamond} = \|\vec{x}\|
\end{equation}
and, by \eqref{E:UIN-def-norm-dual5},
\begin{equation}\label{E:UIN-def-norm-dual6}
\|\vec{x}\| = \sup_{\substack{\vec{y} \in \C^n \\ \vec{y} \neq \vec{0}}} \frac{|\langle \vec{y},\vec{x} \rangle|}{\|\vec{y}\|^{\diamond}}.
\end{equation}
This identity is the dual counterpart of \eqref{E:UIN-def-norm-dual1}.

Consider $\M_n$ as the vector space $\C^{n^2}$. A short calculation confirms that the Euclidean inner product on $\C^{n^2}$ equals the Frobenius inner product arising from the norm \eqref{E:UIN-def-norm-F} in $\M_n$; that is,
\begin{equation}\label{E:UIN-inner-prod-trace}
\langle A,B \rangle_{\F} = \tr (B^*A)  
\end{equation}
for all $A,B \in \M_n$.
If $\|\cdot\|$ is a unitarily invariant norm on $\M_n$, then, by \eqref{E:UIN-def-norm-dual1},
\begin{align*}
\|UAV\|^{\diamond} &= \sup_{\substack{B \in \M_n \\ B \neq 0}} \frac{|\langle B,UAV \rangle_{\F}|}{\|B\|}\\
&= \sup_{\substack{B \in \M_n \\ B \neq 0}} \frac{|\langle U^*BV^*,A \rangle_{\F}|}{\|B\|} 
&& (\text{by trace properties})\\
&= \sup_{\substack{B \in \M_n \\ B \neq 0}} \frac{|\langle U^*BV^*,A \rangle_{\F}|}{\|U^*BV^*\|} && (\text{since $\|\cdot\|$ is unitarily invariant})\\
&= \sup_{\substack{C \in \M_n \\ C \neq 0}} \frac{|\langle C,A \rangle_{\F}|}{\|C\|} 
&& (\text{change of variable})\\
&= \|A\|^{\diamond}.
\end{align*}
Therefore, the dual norm is also unitarily invariant.

\begin{lemma} 
If $\|\cdot\|$ is a unitarily invariant norm on $\M_n$, then for each $A \in \M_n$,
\begin{equation}\label{L:UIN-norm-rep-dual}
\|A\| =
\max \bigg\{
\sum_{j=1}^{n} s_j(A)s_j(B) : B \in \M_n, \, \|B\|^{\diamond}=1\bigg\}.
\end{equation}
\end{lemma}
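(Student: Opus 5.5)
We will prove the two inequalities separately, using the duality identity \eqref{E:UIN-def-norm-dual6} applied to the norm $\|\cdot\|$ on $\M_n$ with the Frobenius inner product \eqref{E:UIN-inner-prod-trace}. Combined with \eqref{E:UIN-def-norm-dual4}, this gives
\begin{equation*}
\|A\| = \sup_{B \neq 0} \frac{|\tr(B^*A)|}{\|B\|^{\diamond}} = \max_{\|B\|^{\diamond}=1} |\tr(B^*A)|.
\end{equation*}
The supremum is attained by compactness of the dual unit sphere. The plan is to show that $|\tr(B^*A)| \leq \sum_j s_j(A)s_j(B)$ for every $B$, and then to exhibit a $B$ with $\|B\|^{\diamond}=1$ for which $\sum_j s_j(A)s_j(B)$ equals $|\tr(B^*A)|$ (or at least is at most $\|A\|$).

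\textbf{Upper bound.} Fix $B$ with $\|B\|^{\diamond}=1$. By \eqref{C:UIN-s1=norm-7b}, $|\tr(B^*A)| \leq \sum_j s_j(B^*A)$. By \eqref{C:UIN-s1=norm-5}, $\vec{s}(B^*A) \prec_{\w} \vec{s}(B^*)*\vec{s}(A)$, and $\vec{s}(B^*)=\vec{s}(B)$. Taking $k=n$ in the weak majorization gives $\sum_j s_j(B^*A) \leq \sum_j s_j(A)s_j(B)$. Hence $\|A\| \leq \sup\{\sum_j s_j(A)s_j(B) : \|B\|^{\diamond}=1\}$.

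\textbf{Reverse inequality.} We need $\sum_j s_j(A)s_j(B) \leq \|A\|$ whenever $\|B\|^{\diamond}=1$, and then we need attainment. Write $B = W\,\diag\vec{s}(B)\,Z$ by the SVD \eqref{E:UIN-A=udiagV-0}, and write $A = U\,\diag\vec{s}(A)\,V$. Set $\widetilde{B} = U\,\diag\vec{s}(B)\,V$. Since the dual norm is unitarily invariant (shown just before the lemma), $\|\widetilde{B}\|^{\diamond} = \|B\|^{\diamond} = 1$. Moreover
\begin{equation*}
\tr(\widetilde{B}^*A) = \tr\big(V^*\diag\vec{s}(B)\,\diag\vec{s}(A)\,V\big) = \sum_j s_j(A)s_j(B).
\end{equation*}
By \eqref{E:UIN-def-norm-dual2}, this quantity is at most $\|A\|\,\|\widetilde{B}\|^{\diamond} = \|A\|$. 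So every value in the set is at most $\|A\|$.

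\textbf{Attainment.} Take $B_0$ with $\|B_0\|^{\diamond}=1$ achieving $\|A\| = |\tr(B_0^*A)|$, which exists by compactness or by \eqref{E:UIN-def-norm-dual3}. The upper-bound step then gives $\|A\| \leq \sum_j s_j(A)s_j(B_0) \leq \|A\|$, so the maximum is attained at $B_0$.

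The only delicate point is the upper bound, specifically the passage from the trace to the singular values of the product. Everything rests on Horn's inequality, in its weak form \eqref{C:UIN-s1=norm-5}, together with Weyl's inequality \eqref{C:UIN-s1=norm-7b}. Both are available from the earlier sections, so this step should go through without further work.
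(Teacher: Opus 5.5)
Your proof is correct and follows essentially the same route as the paper. You use the duality representation $\|A\| = \max_{\|B\|^{\diamond}=1}|\tr(B^*A)|$, the upper bound from \eqref{C:UIN-s1=norm-7b} and \eqref{C:UIN-s1=norm-5}, and the reverse inequality obtained by placing $\diag \vec{s}(B)$ on the singular vectors of $A$ via unitary invariance of $\|\cdot\|^{\diamond}$. Your explicit attainment step with a norming $B_0$ is a slightly more careful version of the paper's closing remark ``take the maximum over $B$.''
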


\begin{proof}
First, apply \eqref{E:UIN-def-norm-dual6} to $\M_n = \C^{n^2}$, equipped with the Frobenius inner product \eqref{E:UIN-inner-prod-trace}, and get
\begin{equation}\label{E:UIN-norm-rep-dual-2}
\|A\| =
\max \big\{
|\tr (B^*A)| : B \in \M_n, \, \|B\|^{\diamond}=1\big\}.
\end{equation}
Then \eqref{C:UIN-s1=norm-7b} yields
\begin{equation*}
|\tr (B^*A)| \leq \sum_{j=1}^{n} s_j(B^*A),
\end{equation*}
and, by \eqref{C:UIN-s1=norm-5}, we get
\begin{equation*}
\sum_{j=1}^{n} s_j(B^*A) \leq \sum_{j=1}^{n} s_j(A) \, s_j(B).
\end{equation*}
Therefore, we have
\begin{equation*}
\|A\| \leq
\max \bigg\{
\sum_{j=1}^{n} s_j(A)s_j(B) : B \in \M_n, \, \|B\|^{\diamond}=1\bigg\}.
\end{equation*}

Next, fix any $B \in \M_n$ with $\|B\|^{\diamond}=1$. Since $\|\cdot\|^{\diamond}$ is unitarily invariant, we have
\begin{equation*}
\|B\|^{\diamond} = \|\diag \big(s(B)\big)\|^{\diamond}=1.
\end{equation*}
There are also unitaries $U, V \in \M_n$ such that
\begin{equation*}
A = U  \diag \big(s(A)\big)  V.
\end{equation*}
Let $C = U  \diag \big(s(B)\big)  V$. Since $\|\cdot\|^{\diamond}$ is unitarily invariant, we have
\begin{equation*}
\|C\|^{\diamond} = \|\diag \big(s(B)\big)\|^{\diamond}= \|B\|^{\diamond} = 1.
\end{equation*}
Thus, 
\begin{align*}
|\tr (C^*A)| &=|\tr (AC^*)| 
= |\tr \big(U  \diag \big(s(A)\big)  V  C^* \big)| \\
&= |\tr \big(U  \diag \big(s(A)\big)  \diag \big(s(B)\big)  U^* \big)| \\
&= |\tr \big(\diag \big(s(A)\big)  \diag \big(s(B)\big) \big)| \\
&= |\tr \big(\diag \big(s(A)*s(B)\big) \big)| \\
&= \sum_{j=1}^{n} s_j(A)s_j(B)
\end{align*}
and hence $\sum_{j=1}^{n} s_j(A)s_j(B) \leq \| A \|$.
Take the maximum over $B$ to get the reverse inequality.
\end{proof}

The norm representation \eqref{L:UIN-norm-rep-dual} suggests that we consider 
\begin{equation*}
\mathcal{E}_{\|\cdot\|} := \{ s(B) : B \in \M_n, \, \|B\|^{\diamond}=1 \} \subset \R^n_{+}\!\!\downarrow.
\end{equation*}
According to \eqref{E:UIN-spectral-norm-s111-g}, we can write \eqref{L:UIN-norm-rep-dual} as
\begin{equation}\label{E:UIN-norm-rep-dual-3}
\|A\| =
\max \{
\|A\|_{\gamma} :  \gamma \in \mathcal{E}_{\|\cdot\|}\}.
\end{equation}

Next is an important application of von Neumann's theorem.

\begin{theorem}[Ky Fan Domination Principle \cite{MR45952}] \label{T:Fan-dominion-P} 
For $A,B \in \M_n$, the following are equivalent.
\begin{enumerate}[(a)]\addtolength{\itemsep}{5pt}
\item $\|A\| \leq \|B\|$ for every unitarily invariant norm $\|\cdot\|$ on $\M_n$.

\item $\|A\|_{(k)} \leq \|B\|_{(k)}$ for all $1 \leq k \leq n$.

\item $\vec{s}(A) \prec_{\w} \vec{s}(B)$.
\end{enumerate}
\end{theorem}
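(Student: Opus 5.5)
I will prove the cycle (a) $\Rightarrow$ (b) $\Rightarrow$ (c) $\Rightarrow$ (a).

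The implication (a) $\Rightarrow$ (b) is immediate. Each Ky Fan $k$-norm $\|\cdot\|_{(k)}$ is a unitarily invariant norm on $\M_n$, as shown via von Neumann's theorem (Theorem \ref{T:UIN-von-Neumann}) in the examples of \S\ref{S:UIN-further-examples}. So (a) applies to it with no further work.

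The implication (b) $\Rightarrow$ (c) is a matter of unpacking definitions. By \eqref{E:UIN-spectral-norm-s111}, $\|A\|_{(k)} = \sum_{j=1}^k s_j(A)$. The vectors $\vec{s}(A)$ and $\vec{s}(B)$ already lie in $\R^n_{+}\!\!\downarrow$, so their decreasing rearrangements are themselves. Hence the inequalities $\|A\|_{(k)} \leq \|B\|_{(k)}$ for $1 \leq k \leq n$ are exactly the partial-sum inequalities defining $\vec{s}(A) \prec_{\w} \vec{s}(B)$.

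The substantive direction is (c) $\Rightarrow$ (a), and here von Neumann's theorem does the work.
\begin{enumerate}[(i)]
\item Let $\|\cdot\|$ be any unitarily invariant norm on $\M_n$.
\item Define $\Phi(\vec{x}) := \|\diag \vec{x}\|$. As noted at the start of \S\ref{S:UIN-on-neumann}, $\Phi$ is a symmetric gauge function.
\item By \eqref{E:UIN-A=udiagV}, $\|A\| = \Phi(\vec{s}(A))$ and $\|B\| = \Phi(\vec{s}(B))$. Equivalently, $\|\cdot\| = \|\cdot\|_{\phi}$, where $\phi$ is the restriction of $\Phi$ to $\R^n_{+}\!\!\downarrow$.
\item Both $\vec{s}(A)$ and $\vec{s}(B)$ lie in $\R^n_{+}$ and $\vec{s}(A) \prec_{\w} \vec{s}(B)$. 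The monotonicity theorem (Theorem \ref{T:UIN-monotonicity}) therefore gives $\Phi(\vec{s}(A)) \leq \Phi(\vec{s}(B))$, that is, $\|A\| \leq \|B\|$.
\end{enumerate}

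The only point requiring care is step (ii): verifying that $\Phi$ really is a symmetric gauge function, and that it agrees with $\|\cdot\|$ on singular values. Absoluteness holds because multiplying $\diag \vec{x}$ by a diagonal unitary (entries $\pm 1$) does not change the norm. Permutation invariance holds because conjugating by permutation matrices does not change the norm. Monotonicity then follows from Lemma \ref{L:UIN-monotone=absolute}. This is precisely the forward half of Theorem \ref{T:UIN-von-Neumann}, so I would simply cite it.

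An alternative route to (c) $\Rightarrow$ (a) uses the dual representation \eqref{E:UIN-norm-rep-dual-3}, which writes $\|A\|$ as a maximum of the norms $\|A\|_{\gamma}$. The summation-by-parts formula \eqref{E:UIN-spectral-norm-s111-ga} expresses each $\|\cdot\|_\gamma$ as a nonnegative combination of Ky Fan norms, which gives the conclusion directly. The monotonicity argument above is shorter, however.
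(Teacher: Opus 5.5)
Your proof is correct, but for the substantive implication it takes a different route from the paper.

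The paper proves (b) $\Rightarrow$ (a) by duality. It first establishes the representation \eqref{E:UIN-norm-rep-dual-3}, $\|A\| = \max\{\|A\|_{\gamma} : \gamma \in \mathcal{E}_{\|\cdot\|}\}$. That representation rests on the Hahn--Banach/Riesz description of the dual norm, the unitary invariance of $\|\cdot\|^{\diamond}$, and the trace and product inequalities \eqref{C:UIN-s1=norm-7b} and \eqref{C:UIN-s1=norm-5}, which come from Weyl and Horn via compound matrices. Summation by parts then turns the Ky Fan inequalities in (b) into $\|A\|_{\gamma} \leq \|B\|_{\gamma}$ for every admissible $\gamma$.

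You instead prove (c) $\Rightarrow$ (a) directly. You write an arbitrary unitarily invariant norm as $\Phi \circ \vec{s}$, where $\Phi(\vec{x}) = \|\diag \vec{x}\|$ is a symmetric gauge function, and then apply the monotonicity theorem (Theorem \ref{T:UIN-monotonicity}). Each step is sound, including the hypothesis that $\vec{s}(A), \vec{s}(B) \in \R^n_{+}$.

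Your route is shorter. It needs only the easy half of von Neumann's theorem plus a result already proved in \S\ref{S:UIN-monotonicity}. It therefore avoids the duality machinery and the compound-matrix facts that the survey quotes without proof, and it shows that the domination principle is essentially a restatement of the monotonicity theorem.

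The paper's route gives something in exchange: a structural fact. Every unitarily invariant norm is a maximum of weighted Ky Fan norms $\|\cdot\|_{\gamma}$, and each of these is a nonnegative combination of the $\|\cdot\|_{(k)}$. This makes explicit why the Ky Fan norms control all the others, and the fact is useful beyond this theorem. The alternative you sketch at the end is exactly the paper's proof.
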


\begin{proof}
\noindent (a) $\Rightarrow$ (b) This is immediate.

\medskip\noindent (b) $\Rightarrow$ (a) 
If $A= 0$, there is nothing to prove.
Suppose that $A \neq 0$. As in \eqref{E:UIN-spectral-norm-s111-ga}, summation by parts gives
\begin{equation*}
\|A\|_\gamma = \sum_{j=1}^{n-1} (\gamma_j-\gamma_{j+1}) \|A\|_{(j)} + \gamma_n \|A\|_{(n)}.
\end{equation*}
Therefore, assumption (b) implies
\begin{equation*}
\|A\|_{\gamma} \leq \|B\|_{\gamma}
\end{equation*}
for all $\boldsymbol{\gamma} = (\gamma_1,\gamma_2,\ldots,\gamma_n) \in \R^n_{+}\!\!\downarrow$ with $\gamma_1>0$; see \S\ref{S:UIN-further-examples}. 
If $\gamma \in \mathcal{E}_{\|\cdot\|}$, then $\boldsymbol{\gamma} = \vec{s}(C)$ for some
$C \in \M_n$ with $\| C \|^{\diamond} =1$.  In particular, $C \neq 0$ and hence
$\gamma_1 = s_1(C) > 0$.  Thus, the previous inequality applies to every
$\gamma \in \mathcal{E}_{\|\cdot\|}$.  Therefore, \eqref{E:UIN-norm-rep-dual-3}
ensures that $\|A \| \leq \|B\|$.

\medskip\noindent (b) $\Leftrightarrow$ (c) This follows from the definition \eqref{E:UIN-spectral-norm-s111} of the Ky Fan norms.
\end{proof}

The Ky Fan domination principle has several applications. We present two of them below. To effectively use Theorem \ref{T:Fan-dominion-P}, we establish an interesting result connecting $\|\cdot\|_{(k)}$ to $\|\cdot\|_{\mathcal{T}}$ and $\|\cdot\|_{\mathrm{op}}$. 

For any $X \in \M_n$ and any decomposition $X=Y+Z$, we have
\begin{align}
\|X\|_{(k)} &\leq \|Y\|_{(k)} + \|Z\|_{(k)} \notag\\
&\leq \|Y\|_{(n)} + k\|Z\|_{(1)} \notag\\
&= \|Y\|_{\mathcal{T}} + k\|Z\|_{\mathrm{op}}. \label{E:A-decmpoose-TS1}
\end{align}
These inequalities are sharp. If $X=U\diag (s_1,s_2,\ldots,s_n)V$ is the singular value decomposition of $X$, in which $U,V \in \M_n$ are unitary, then consider
\begin{equation*}
Y = U\diag (s_1-s_k,s_2-s_k,\ldots,s_k-s_k,0,\ldots,0)V
\end{equation*}
and
\begin{equation*}
Z = U\diag (s_k,s_k,\ldots,s_k,s_{k+1},\ldots,s_n)V.
\end{equation*}
Then, $X=Y+Z$ and
\begin{equation*}
\|Y\|_{\mathcal{T}} = (s_1-s_k)+(s_2-s_k)+\cdots+(s_k-s_k) = (s_1+s_2+\cdots+s_k)-ks_k
\end{equation*}
and
\begin{equation*}
\|Z\|_{\mathrm{op}} = s_k.
\end{equation*}
For this choice of $Y$ and $Z$, we have
\begin{equation}\label{E:A-decmpoose-TS1b}
\|X\|_{(k)} = \|Y\|_{\mathcal{T}} + k\|Z\|_{\mathrm{op}}.
\end{equation}

\begin{corollary}[Mirsky \cite{MR114821}] \label{C:mirsky-s}
For all $A,B \in \M_n$ and every unitarily invariant norm,
\begin{equation}\label{E:mirsky-s1}
\big\| \diag \big(s(A)-s(B)\big) \big\|
\leq
\|A-B\|.
\end{equation}
\end{corollary}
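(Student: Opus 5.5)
By the Ky Fan domination principle (Theorem \ref{T:Fan-dominion-P}), it suffices to prove the inequality for every Ky Fan $k$-norm. Equivalently, we must show the weak majorization
\begin{equation*}
\vec{s}\big(\diag(\vec{s}(A)-\vec{s}(B))\big) \prec_{\w} \vec{s}(A-B).
\end{equation*}
The singular values of $\diag(\vec{s}(A)-\vec{s}(B))$ are the entries of $|\vec{s}(A)-\vec{s}(B)|\!\!\downarrow$. So the goal is
\begin{equation*}
\sum_{j\in J} |s_j(A)-s_j(B)| \leq \sum_{j=1}^{k} s_j(A-B)
\end{equation*}
for every index set $J\subset\{1,\ldots,n\}$ with $|J|=k$.

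To get this, I would pass to the Hermitian dilation $\Gamma(X)$ from \eqref{E:def-gamma-x}, as in the proof of Theorem \ref{T:UIN-weak-aubadd}. The eigenvalues of $\Gamma(X)$ are $\pm s_j(X)$, and $\Gamma$ is linear, so $\Gamma(A)=\Gamma(B)+\Gamma(A-B)$. Ky Fan's theorem (Theorem \ref{T:UIN-fan}), applied to this sum, gives $\boldsymbol{\lambda}(\Gamma(A))\prec \boldsymbol{\lambda}(\Gamma(B))+\boldsymbol{\lambda}(\Gamma(A-B))$. That is the wrong shape: we want a majorization of $\boldsymbol{\lambda}(\Gamma(A))-\boldsymbol{\lambda}(\Gamma(B))$ by $\boldsymbol{\lambda}(\Gamma(A-B))$.

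The key intermediate result is therefore the Lidskii-type statement: for Hermitian $X,Y\in\h_m$,
\begin{equation*}
\sum_{i\in I}\big(\lambda_i(X)-\lambda_i(Y)\big)\leq \sum_{i=1}^{|I|}\lambda_i(X-Y)
\end{equation*}
for every index set $I$.

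\textbf{Main obstacle.} Ky Fan's theorem alone only covers the case where $I$ is an initial segment $\{1,\ldots,k\}$. The general case needs more. I would first try the standard reduction. Write $X-Y=P-N$ with $P,N\geq 0$, where $P$ is the positive part and $\lambda_i(P)=\max(\lambda_i(X-Y),0)$. Since $X\leq Y+P$, the Weyl monotonicity principle (Theorem \ref{T:UIN-Cauchy-Weyl-1}) gives $\lambda_i(X)\leq\lambda_i(Y+P)$. Because $P\geq 0$, the same principle also gives $\lambda_i(Y+P)-\lambda_i(Y)\geq 0$ for every $i$. Hence
\begin{equation*}
\sum_{i\in I}\big(\lambda_i(X)-\lambda_i(Y)\big)\leq \sum_{i=1}^{m}\big(\lambda_i(Y+P)-\lambda_i(Y)\big)=\tr P.
\end{equation*}
This bound is too weak unless $\operatorname{rank}P\leq |I|$. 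To fix it, first shift $X-Y$ by $\lambda_k(X-Y)$, with $k=|I|$, so that $P$ has rank at most $k-1$ up to the shifted part; the shift changes both sides by the same amount $k\lambda_k(X-Y)$. Then
\begin{equation*}
\tr P=\sum_{i=1}^{k}\big(\lambda_i(X-Y)-\lambda_k(X-Y)\big),
\end{equation*}
and adding back $k\lambda_k(X-Y)$ yields exactly the claimed Lidskii inequality. This mirrors the splitting \eqref{E:A-decmpoose-TS1b} in the paper.

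\textbf{Conclusion.} Apply the Lidskii inequality with $X=\Gamma(A)$ and $Y=\Gamma(B)$. The top $n$ eigenvalues of $\Gamma(A)$ and $\Gamma(B)$ are $s_j(A)$ and $s_j(B)$, and the bottom $n$ are their negatives in reversed order. Fix $J$ with $|J|=k$. For each $j\in J$, choose the index $j$ if $s_j(A)\geq s_j(B)$, and the mirrored index $2n+1-j$ otherwise. This produces an index set $I$ with $|I|=k$ and
\begin{equation*}
\sum_{i\in I}\big(\lambda_i(\Gamma(A))-\lambda_i(\Gamma(B))\big)=\sum_{j\in J}|s_j(A)-s_j(B)|.
\end{equation*}
Since $\lambda_i(\Gamma(A-B))=s_i(A-B)$ for $i\leq n$ and $k\leq n$, the right side of the Lidskii inequality is $\sum_{i=1}^k s_i(A-B)$. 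This gives the desired weak majorization, and Theorem \ref{T:Fan-dominion-P} then finishes the proof.
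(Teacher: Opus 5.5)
Your argument is correct, but it follows a genuinely different route from the paper's.

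The paper first proves \eqref{E:mirsky-s1} separately for the two endpoint norms. For $\|\cdot\|_{\mathrm{op}}$ it uses \eqref{E:UIN-max-min-formula-2} and best rank-$(i-1)$ approximation. For $\|\cdot\|_{\mathcal{T}}$ it uses the Jordan decomposition, Weyl monotonicity applied to $C=A+(A-B)_{-}=B+(A-B)_{+}$, and the dilation $\Gamma$. It then reaches each Ky Fan norm by splitting $A-B=Y+Z$ as in \eqref{E:A-decmpoose-TS1b} and applying \eqref{E:A-decmpoose-TS1} to $\diag(s(A)-s(B))$.

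You instead prove Lidskii's inequality $\sum_{i\in I}(\lambda_i(X)-\lambda_i(Y))\leq\sum_{i=1}^{|I|}\lambda_i(X-Y)$ directly for arbitrary index sets. Your translation $X\mapsto X-\lambda_k(X-Y)I$ does at the eigenvalue level what the $Y+Z$ splitting does at the norm level. Your choice of $j$ or $2n+1-j$ correctly turns the result into $|s(A)-s(B)|\prec_{\w}s(A-B)$.

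Your route is shorter and proves more along the way. It needs only Weyl monotonicity, $\Gamma$, and Theorem \ref{T:Fan-dominion-P}, and it skips the spectral-norm step entirely. It also already contains Corollary \ref{C:lidskii-s}: the $k$ largest entries of $\boldsymbol{\lambda}(X)-\boldsymbol{\lambda}(Y)$ sit on some $I$ with $|I|=k$, and the traces agree. This reverses the paper's order, where Lidskii is deduced from Mirsky.

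The paper's route is more modular. It interpolates between the endpoints through the exact formula $\|X\|_{(k)}=\|Y\|_{\mathcal{T}}+k\|Z\|_{\mathrm{op}}$. That template transfers any estimate $\|F(A)-F(B)\|\leq\|A-B\|$ valid for both $\|\cdot\|_{\mathcal{T}}$ and $\|\cdot\|_{\mathrm{op}}$ to every Ky Fan norm. Its first step also records $s_i(A)$ as the distance from $A$ to the matrices of rank at most $i-1$.

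One expository fix: the phrase ``rank at most $k-1$ up to the shifted part'' is muddled. Say explicitly that the positive-part bound is applied to the pair $X-\lambda_k(X-Y)I$ and $Y$. Their difference has positive part $P$ with $\tr P=\sum_{i=1}^{k}(\lambda_i(X-Y)-\lambda_k(X-Y))$.
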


\begin{proof}
Here is the plan of the proof. In the first step, we prove \eqref{E:mirsky-s1} for the spectral norm $\|\cdot\|_{\mathrm{op}}$. Then we prove it for the trace norm $\|\cdot\|_{\mathcal{T}}$. In the next step, we use \eqref{E:A-decmpoose-TS1} and the previous two steps to extend the validity of \eqref{E:mirsky-s1} to the Ky Fan norms $\|\cdot\|_{(k)}$. Finally, we use the Ky Fan domination principle to establish the result for all unitarily invariant norms.

\medskip
\noindent \textsc{Step 1 (the spectral norm $\|\cdot\|_{\mathrm{op}}$):} By \eqref{E:UIN-max-min-formula-2}, we have
\begin{equation*}
s_i(A) = \min_{\substack{\V \subset \C^n \\ \dim \V=n-i+1}} \,\, \max_{\substack{\vec{x} \in \V \\ \|\vec{x}\|=1}} \,\, \|A\vec{x}\|.
\end{equation*}
If $X \in \M_n$ has rank at most $i-1$, then $\dim \ker(X) \geq n-i+1$. 
Choose an $(n-i+1)$-dimensional subspace $\V \subset \ker(X)$. This choice of $\V$ shows that
\begin{equation*}
s_i(A) \leq \max_{\substack{\vec{x} \in \V \\ \|\vec{x}\|=1}} \,\, \|Ax\| = \max_{\substack{\vec{x} \in \V \\ \|\vec{x}\|=1}} \,\, \|(A-X)x\| \leq \|A-X\|_{\mathrm{op}}.
\end{equation*}
Therefore, 
\begin{equation}\label{E:sk-leq-norm-A-X}
s_i(A) \leq \|A-X\|_{\mathrm{op}} \quad \text{whenever} \quad \rank(X) \leq i-1.
\end{equation}
Moreover, if $A=U\diag (s_1,s_2,\ldots,s_n)V$ is the singular value decomposition of $A$, consider
\begin{equation*}
X_0 = U\diag (s_1,s_2,\ldots,s_{i-1},0,0,\ldots,0)V.
\end{equation*}
Then $\rank(X_0) \leq i-1$ and also
\begin{equation*}
\|A-X_0\|_{\mathrm{op}} = \|U\diag (0,0,\ldots,0,s_{i},s_{i+1},\ldots,s_{n})V\|_{\mathrm{op}} = s_i.
\end{equation*}
Therefore, the inequality in \eqref{E:sk-leq-norm-A-X} is sharp in the sense that there is a matrix $X_0 \in \M_n$ with $\rank(X_0) \leq i-1$ such that
\begin{equation}\label{E:sk-leq-norm-A-X2}
s_i(A) = \|A-X_0\|_{\mathrm{op}}.
\end{equation}

Assume $X_0$ is as given in \eqref{E:sk-leq-norm-A-X2} for the matrix $A$. Then, for any other $B \in \M_n$, it follows from \eqref{E:sk-leq-norm-A-X} that
\begin{align*}
s_i(B) &\leq \|B-X_0\|_{\mathrm{op}}\\
&= \|(A-X_0)+(B-A)\|_{\mathrm{op}}\\
&\leq \|A-X_0\|_{\mathrm{op}}+\|B-A\|_{\mathrm{op}}\\
&\leq s_i(A)+\|B-A\|_{\mathrm{op}}.
\end{align*}
Changing the order of $A$ and $B$ above, we see that
\begin{equation*}
|s_i(A)-s_i(B)| \leq \|A-B\|_{\mathrm{op}} 
\end{equation*}
for all $1 \leq i \leq n$.  Thus,
\begin{equation}\label{E:sk-leq-norm-A-X3}
\big\| \diag \big(s(A)-s(B)\big) \big\|_{\mathrm{op}}
\leq
\|A-B\|_{\mathrm{op}}.
\end{equation}
In other words, \eqref{E:mirsky-s1} is valid for the spectral norm.

\medskip
\noindent \textsc{Step 2 (the trace norm $\|\cdot\|_{\mathcal{T}}$):}
We start with an observation about the trace norm. Let $X \in \h_n$ and let $X=U\diag (\lambda_1,\ldots,\lambda_n)U^*$ be its spectral decomposition. 
Let $p \in \{0,1,\ldots,n\}$ be such that
$\lambda_1 \geq \cdots \geq \lambda_p \geq 0$, while, if $p<n$, we have
$0>\lambda_{p+1} \geq \cdots \geq \lambda_n$.  Thus, $p=0$ means that all eigenvalues
are negative and $p=n$ means that all eigenvalues are nonnegative.
Define
\begin{equation*}
X_{+} = U\diag (\lambda_1,\ldots,\lambda_p,0,0,\ldots,0)U^*
\end{equation*}
and
\begin{equation*}
X_{-}= -U\diag (0,\ldots,0,\lambda_{p+1},\ldots,\lambda_n)U^*
\end{equation*}
and note that $X_{+} \geq 0$ and $X_{-} \geq 0$, so that $X=X_{+}-X_{-}$. 
This is the \emph{Jordan decomposition} of $X$. The property that we need is that
\begin{equation}\label{E:jordan-dec-trace-nprm}
\|X\|_{\mathcal{T}} = \tr (X_{+}) + \tr (X_{-}) = \sum_{j=1}^{n} |\lambda_j|.
\end{equation}

Suppose that $A,B \in \h_n$.  Apply \eqref{E:jordan-dec-trace-nprm} to $X=A-B$ and obtain
\begin{equation}\label{E:jordan-dec-trace-nprm2}
\|A-B\|_{\mathcal{T}} = \tr ((A-B)_{+}) + \tr ((A-B)_{-}).
\end{equation}
Let
\begin{equation*}
C = A+(A-B)_{-} = B+(A-B)_{+}
\end{equation*}
and observe that $C \geq A$ and $C \geq B$. 
The Weyl monotonicity principle (Theorem \ref{T:UIN-Cauchy-Weyl-1}) ensures that
\begin{equation*}
\lambda_j(C) \geq \lambda_j(A)
\qquad\text{and}\qquad
\lambda_j(C) \geq \lambda_j(B),
\end{equation*}
for $1 \leq j \leq n$. This yields
\begin{align*}
|\lambda_j(A)-\lambda_j(B)| &= |(\lambda_j(C)-\lambda_j(B)) - (\lambda_j(C)-\lambda_j(A))|\\
&\leq (\lambda_j(C)-\lambda_j(B)) + (\lambda_j(C)-\lambda_j(A) ) \\
&= \lambda_j(2C)-\lambda_j(A) -\lambda_j(B).
\end{align*}
Therefore, by \eqref{E:jordan-dec-trace-nprm2},
\begin{align}
\sum_{j=1}^{n} |\lambda_j(A)-\lambda_j(B)| &\leq \sum_{j=1}^{n}\lambda_j(2C)-\sum_{j=1}^{n}\lambda_j(A) -\sum_{j=1}^{n}\lambda_j(B) \notag\\
&= \tr (2C)-\tr (A)-\tr (B) \notag\\
&= \tr (C-A) + \tr (C-B) \notag\\
&= \tr ((A-B)_{+}) + \tr ((A-B)_{-}) \notag\\
&= \|A-B\|_{\mathcal{T}}. \label{E:diff-eigenvalue-a-b}
\end{align}
Next, we use the technique \eqref{E:def-gamma-x} and apply the estimate above to 
\begin{equation*}
\Gamma(A) =
\begin{bmatrix}
0 & A^* \\
A & 0
\end{bmatrix} \in \h_{2n}
\quad\text{and}\quad
\Gamma(B) =
\begin{bmatrix}
0 & B^* \\
B & 0
\end{bmatrix} \in \h_{2n}.
\end{equation*}
Recall that the first $n$ eigenvalues of $\Gamma(X)$ are the singular values of $X$ and the next $n$ eigenvalues are the negatives of these singular values in reverse order. Thus,  \eqref{E:diff-eigenvalue-a-b} becomes
\begin{align*}
2\sum_{j=1}^{n} |s_j(A)-s_j(B)|  &= \sum_{j=1}^{2n} |\lambda_j(\Gamma(A))-\lambda_j(\Gamma(B))|\\
&\leq \|\Gamma(A)-\Gamma(B)\|_{\mathcal{T}}\\
&= \|\Gamma(A-B)\|_{\mathcal{T}}\\
&= 2 \|A-B\|_{\mathcal{T}}
\end{align*}
and hence
\begin{equation}\label{E:sk-leq-norm-A-X4}
\big\| \diag \big(s(A)-s(B)\big) \big\|_{\mathcal{T}}
\leq
\|A-B\|_{\mathcal{T}}.
\end{equation}

\medskip
\noindent \textsc{Step 3 (the Ky Fan norms $\|\cdot\|_{(k)}$):} Fix $1 \leq k \leq n$ and apply \eqref{E:A-decmpoose-TS1b} to $X=A-B$. Then there exist $Y,Z\in \M_n$ such that $A-B=Y+Z$ and
\begin{equation}\label{Ea-b=y+kz}
\|A-B\|_{(k)} = \|Y\|_{\mathcal{T}} + k\|Z\|_{\mathrm{op}}.  
\end{equation}
Consider the trivial identity
\begin{equation*}
\diag \big(s(A)-s(B)\big)
=
\diag \big(s(Y+B)-s(B)\big)+
\diag \big(s(A)-s(Y+B)\big).
\end{equation*}
Observe that \eqref{E:A-decmpoose-TS1} implies that
\begin{align*}
\big\|\diag \big(s(A)-s(B)\big)\big\|_{(k)}
&\leq
\big\|\diag \big(s(Y+B)-s(B)\big)\big\|_{\mathcal{T}} \\
&+ k\big\|\diag \big(s(A)-s(Y+B)\big)\big\|_{\mathrm{op}}.
\end{align*}
Moreover, \eqref{E:sk-leq-norm-A-X4} implies that
\begin{equation*}
\big\|\diag \big(s(Y+B)-s(B)\big)\big\|_{\mathcal{T}} \leq \|(Y+B)-B\|_{\mathcal{T}} = \|Y\|_{\mathcal{T}}.
\end{equation*}
By \eqref{E:sk-leq-norm-A-X3},
\begin{equation*}
\big\|\diag \big(s(A)-s(Y+B)\big)\big\|_{\mathrm{op}} \leq \|A-(Y+B)\|_{\mathrm{op}} = \|Z\|_{\mathrm{op}}.
\end{equation*}
Therefore, by \eqref{E:sk-leq-norm-A-X3}, \eqref{E:sk-leq-norm-A-X4}, and \eqref{Ea-b=y+kz}, we have
\begin{equation*}
\big\|\diag \big(s(A)-s(B)\big)\big\|_{(k)} \leq 
\|Y\|_{\mathcal{T}} + k\|Z\|_{\mathrm{op}} = \|A-B\|_{(k)}.
\end{equation*}

\medskip
\noindent \textsc{Step 4 (general unitarily invariant norms):} Since we established the results for the Ky Fan $k$-norms, the general result is a consequence of the Ky Fan domination principle (Theorem \ref{T:Fan-dominion-P}).
\end{proof}

Note that the relation \eqref{E:mirsky-s1} is equivalent to 
\begin{equation}\label{E:mirsky-s2}
|s(A)-s(B)| \prec_{\w} s(A-B).
\end{equation}

\begin{corollary}[Lidskii \cite{MR39686}] \label{C:lidskii-s}
If $A,B \in \h_n$, then
\begin{equation*}
\lambda(A)-\lambda(B) \prec \lambda(A-B).
\end{equation*}
\end{corollary}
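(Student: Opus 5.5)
We must show $\lambda(A)-\lambda(B)\prec\lambda(A-B)$ for $A,B\in\h_n$. Since majorization is invariant under permuting either vector, we rearrange $\lambda(A)-\lambda(B)$ in decreasing order. The equality of total sums is immediate: $\sum_j(\lambda_j(A)-\lambda_j(B))=\tr A-\tr B=\tr(A-B)=\sum_j\lambda_j(A-B)$. So the work is the partial-sum inequalities: for each $k$ and each index set $J\subset\{1,\dots,n\}$ with $|J|=k$,
\begin{equation*}
\sum_{j\in J}\big(\lambda_j(A)-\lambda_j(B)\big)\leq \sum_{j=1}^{k}\lambda_j(A-B).
\end{equation*}

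The plan is to imitate Step 2 of the proof of Corollary \ref{C:mirsky-s}, but with a shifted positive part. Put $X=A-B$ and fix $k$. Let $X=U\diag(\lambda_1(X),\dots,\lambda_n(X))U^*$ and write $X=X_1+X_2$ with
\begin{equation*}
X_1=U\diag\big(\lambda_1(X)-\lambda_k(X),\dots,\lambda_k(X)-\lambda_k(X),0,\dots,0\big)U^*\geq 0
\end{equation*}
and $X_2=X-X_1$, so that $\lambda_1(X_2)\leq\lambda_k(X)$. Set $C=B+X_1$; then $C\geq B$ and $A=C+X_2\leq C+\lambda_k(X)I$.

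The Weyl monotonicity principle (Theorem \ref{T:UIN-Cauchy-Weyl-1}) then gives two facts for every $j$. First, $\lambda_j(C)-\lambda_j(B)\geq 0$, so
\begin{equation*}
\sum_{j\in J}\big(\lambda_j(C)-\lambda_j(B)\big)\leq\sum_{j=1}^n\big(\lambda_j(C)-\lambda_j(B)\big)=\tr X_1=\sum_{j=1}^{k}\lambda_j(X)-k\lambda_k(X).
\end{equation*}
Second, $\lambda_j(A)\leq\lambda_j(C)+\lambda_k(X)$, so summing over $J$ gives $\sum_{j\in J}(\lambda_j(A)-\lambda_j(C))\leq k\lambda_k(X)$. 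Adding these two bounds yields $\sum_{j\in J}(\lambda_j(A)-\lambda_j(B))\leq\sum_{j=1}^k\lambda_j(A-B)$, which is exactly the required partial-sum inequality.

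The main obstacle is finding the right decomposition of $A-B$. It is essentially the Hermitian analogue of \eqref{E:A-decmpoose-TS1b}, with a trace-type piece and an operator-norm-type piece. The verifications that follow are routine applications of Weyl monotonicity; the only care needed is that these inequalities hold for arbitrary index sets $J$, not just initial segments, which is why the argument bounds $\sum_{j\in J}(\lambda_j(C)-\lambda_j(B))$ by the full trace using nonnegativity.
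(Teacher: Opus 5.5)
Your proof is correct, and it takes a different route from the paper's.

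\textbf{The paper's route.} The paper never argues directly with eigenvalue inequalities. It shifts to $A+aI \geq B+bI \geq 0$, so that eigenvalues coincide with singular values. It then applies Mirsky's weak majorization \eqref{E:mirsky-s2} to the shifted matrices, discards the absolute values and the shift, and finishes with the trace identity. Lidskii therefore appears as a by-product of Corollary \ref{C:mirsky-s}. That corollary's proof runs through the spectral- and trace-norm estimates, the decomposition \eqref{E:A-decmpoose-TS1b}, and the Ky Fan domination principle (Theorem \ref{T:Fan-dominion-P}), and hence ultimately through von Neumann's theorem.

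\textbf{Your route.} You stay entirely with Hermitian matrices. Your $X_1$ is exactly $(X-\lambda_k(X)I)_{+}$, so you are fusing the device $C=B+(A-B)_{+}$ from Step 2 of Mirsky's proof with the shifted splitting of Step 3. The only tools you need are Weyl monotonicity (Theorem \ref{T:UIN-Cauchy-Weyl-1}) and additivity of the trace. Your bound over arbitrary index sets $J$, using the nonnegativity of $\lambda_j(C)-\lambda_j(B)$, is exactly what handles the unsorted vector $\lambda(A)-\lambda(B)$.

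\textbf{What each buys.} Your argument is much shorter and self-contained, and it reverses the paper's dependency. Applying your inequality to $\Gamma(A)$ and $\Gamma(B)$ from \eqref{E:def-gamma-x}, and comparing partial sums of length $k\leq n$, gives $|s(A)-s(B)| \prec_{\w} s(A-B)$ directly. The paper's route, in exchange, presents Lidskii as an application of the unitarily invariant norm machinery the survey is built around.
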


\begin{proof}
Recall that if $X$ is positive semidefinite, then $\boldsymbol{\lambda}(X)=\vec{s}(X)$. 
Since $A,B \in \h_n$, there are $a,b \in \R$ such that
\begin{equation*}
A+aI \geq B+bI \geq 0.
\end{equation*}
Indeed, choose $b$ so that $B+bI\geq 0$, then choose $a-b$ so large that $A - B + (a-b) I \geq 0$.
Hence,
\begin{equation*}
\vec{s}(A+aI) = \boldsymbol{\lambda}(A)+a\vec{1}
\quad\text{and}\quad
\vec{s}(B+bI) = \boldsymbol{\lambda}(B)+b\vec{1},
\end{equation*}
and
\begin{equation*}
\vec{s}(A-B+(a-b)I) = \boldsymbol{\lambda}(A-B)+(a-b)\vec{1}.
\end{equation*}
Then, applying \eqref{E:mirsky-s2} to $A+aI$ and $B+bI$, we obtain
\begin{equation*}
|\boldsymbol{\lambda}(A)-\boldsymbol{\lambda}(B)+(a-b)\vec{1}| \prec_{\w} \boldsymbol{\lambda}(A-B)+(a-b)\vec{1}.
\end{equation*}
Since
\begin{equation*}
\boldsymbol{\lambda}(A)-\boldsymbol{\lambda}(B)+(a-b)\vec{1} \prec_{\w} |\boldsymbol{\lambda}(A)-\boldsymbol{\lambda}(B)+(a-b)\vec{1}|
\end{equation*}
is immediate, we conclude that
\begin{equation*}
\boldsymbol{\lambda}(A)-\boldsymbol{\lambda}(B)+(a-b)\vec{1} \prec_{\w} \boldsymbol{\lambda}(A-B)+(a-b)\vec{1},
\end{equation*}
which implies
\begin{equation*}
\boldsymbol{\lambda}(A)-\boldsymbol{\lambda}(B) \prec_{\w} \boldsymbol{\lambda}(A-B).
\end{equation*}
However, we also have
\begin{align*}
\sum_{j=1}^{n} \lambda_j(A)-\lambda_j(B) &= \tr (A) - \tr (B)\\
&= \tr (A-B)\\
&= \sum_{j=1}^{n} \lambda_j(A-B).
\end{align*}
Thus,
\begin{equation*}
\boldsymbol{\lambda}(A)-\boldsymbol{\lambda}(B) \prec \lambda(A-B). \qedhere
\end{equation*}
\end{proof}

\begin{acknowledgement}
SRG was supported by National Science Foundation grant DMS-2452084; JM
was supported by the NSERC Discovery Grant (Canada); and MP was supported by the Ministry of Science and
Higher Education of the Republic of Poland.  After the entire article was drafted by the authors,
ChatGPT was consulted for proofreading and editing purposes.
\end{acknowledgement}

\bibliographystyle{plain}
\bibliography{Norm-References}
\end{document}